\def\build#1_#2^#3{\mathrel{
\mathop{\kern 0pt#1}\limits_{#2}^{#3}}}
\def\llbracket{[\hspace{-.10em} [ }
\def\rrbracket{ ] \hspace{-.10em}]}
\newtheorem{theorem}{Theorem}
\newtheorem{proposition}[theorem]{Proposition}
\newtheorem{lemma}[theorem]{Lemma}
\def\w{\mathrm{w}}
\def\t{\mathcal{T}}
\def\f{\mathcal{F}}
\def\r{\mathcal{R}}
\def\N{\mathbb{N}}
\def\M{\mathbb{M}}
\def\D{\mathbb{D}}
\def\P{\mathbb{P}}
\def\U{\mathbb{U}}
\def\E{\mathbb{E}}
\def\R{\mathbb{R}}
\def\z{\mathcal{Z}}
\def\n{\mathcal{N}}
\def\ee{\mathcal{E}}
\def\ve{{\varepsilon}}
\def\la{\longrightarrow}
\def\da{\downarrow}
\def\dd{\mathrm{d}}
\def\wh{\widehat}
\def\wt{\widetilde}
\def\tr{\mathrm{tr}}
\def\v{\mathcal{V}}
\def\xx{\mathbf{x}}
\def\kk{\mathcal{K}}
\def\be{\mathbf{e}}
\def\HH{\mathfrak{H}}
\author{Jean-Fran\c cois Le Gall and Armand Riera}
\title{Peeling the Brownian half-plane}
\date{\small Universit\'e Paris-Saclay and Sorbonne Universit\'e}
\begin{document}

\maketitle

\begin{abstract}
We establish a new spatial Markov property of the Brownian half-plane. According to this property,
if one removes a hull centered at a boundary point, the remaining space
equipped with an intrinsic metric is still a Brownian half-plane, which is
independent of the part that has been removed. This is an analog of the
well-known peeling procedure for random planar maps. We also
investigate several distributional properties of hulls centered at
a boundary point, and we provide a 
new construction of the Brownian half-plane giving information 
about distances from a half-boundary.
\end{abstract}

%\tableofcontents
\section{Introduction}

This work is concerned with the models of random geometry that arise as scaling limits of large 
graphs embedded in the sphere. We are especially interested in the Brownian half-plane, which appears as the scaling
limit of large planar quadrangulations with a boundary \cite{BMR}, under appropriate conditions on their boundary sizes, or of the 
infinite random lattice known as the Uniform Infinite Half-Planar Quadrangulation \cite{GM}. The Brownian half-plane 
shares the same local properties as the other well-known models called the Brownian sphere, the Brownian disk or
the Brownian plane, but it has the topology of the usual half-plane. In particular, one may define its boundary as the
set of all points that have no neighborhood homeomorphic to the open unit disk. One of the main results of
the present work is a new spatial Markov property that can be described informally as follows. Let $r>0$, and suppose that
one has explored all points of the Brownian half-plane that are at distance smaller than or equal to $r$ from a distinguished point of the
boundary, and also all points that have been disconnected from infinity in this exploration. Then the remaining 
unexplored region is still a Brownian half-plane, which furthermore is independent of the explored region. Different forms
of the spatial 
Markov property have been obtained in other models (see in particular \cite{spine,LGR3}), but this property takes a nicer form
for the Brownian half-plane, where no conditioning is needed to describe the law of the unexplored region. The spatial
Markov property of the Brownian half-plane is a kind of continuous version of the peeling process of
(finite or infinite) random planar maps, which involves exploring faces of the map one after another, in such a way that
the distribution of the unexplored region only depends on the size of its boundary. The peeling process of random
planar maps has found a large number of applications (see in particular \cite{AC,Cur}), and we hope that similar applications can
be developed in our continuous setting (see \cite{FS} for a first application of the spatial Markov property of the Brownian half-plane).

Let us give a more precise presentation of our main results. We write $(\mathfrak{H},D)$ for the 
random metric space that we call the Brownian half-space. This space comes with a volume measure denoted
by $V$ and a distinguished curve $(\Lambda(t))_{t\in\R}$ whose range is the
boundary $\partial\mathfrak{H}$. Therefore, we view $(\mathfrak{H},D,V,\Lambda)$ as a 
curve-decorated measure metric space, in the framework of \cite{GM}. The distinguished point on the boundary is
the point $\xx=\Lambda(0)$ (this point plays no special role and could be replaced by $\Lambda(t)$, for a given $t\in\R$, in what
follows). For $r>0$, we consider  the closed ball of radius $r$ centered at $\xx$, which
we denote by $B_r(\mathfrak{H})$.
The hull $B^\bullet_r(\mathfrak{H})$ is obtained by filling in the ``holes'' of $B_r(\mathfrak{H})$, or, more precisely,
$B^\bullet_r(\mathfrak{H})$ is the complement of the unbounded component of the open set $\mathfrak{H}\backslash B_r(\mathfrak{H})$.
Write $\mathfrak{H}_r$ for the closure of $\mathfrak{H}\backslash B^\bullet_r(\mathfrak{H})$. According to Theorem \ref{peeling-HP}, $\mathfrak{H}_r$
equipped with an intrinsic distance $D_r$, with the restriction $V_r$ of the volume measure $V$,
and  with an appropriately defined boundary curve $\Lambda^r$,  is again a Brownian half-plane. Furthermore, $\mathfrak{H}_r$
is independent of the hull $B^\bullet_r(\mathfrak{H})$ also viewed as a random
curve-decorated measure metric space (Theorem \ref{indep-peeling}). In the same way as it was done for
the peeling of random planar maps, one can iterate the procedure and remove from $\mathfrak{H}_r,$ a hull
centered at a boundary point $\mathbf{\xx'}$ of $\partial\mathfrak{H}_r$ (which can be chosen as a measurable function of $B^\bullet_r(\HH)$)
to get another Brownian half-plane, and so on. See Fig.~1 for a schematic illustration.

\smallskip
 \begin{figure}[!h]
\centering
   \includegraphics[width=12cm]{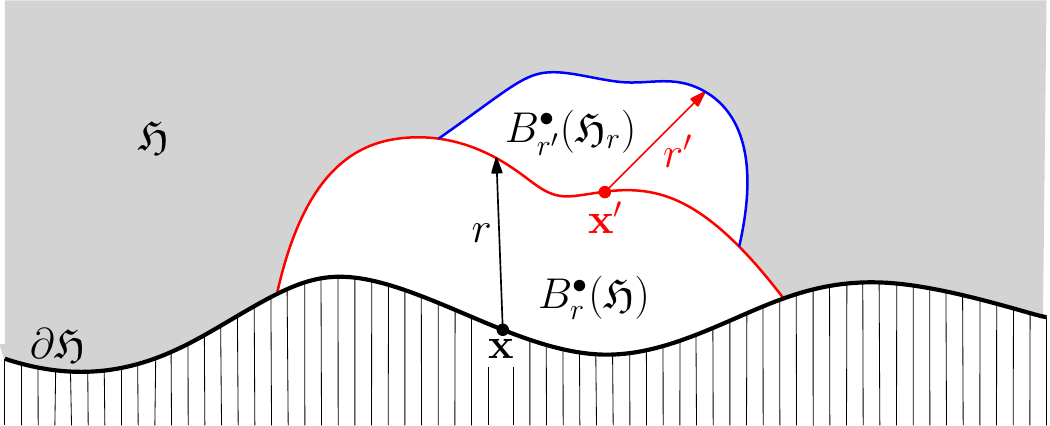}
    \caption{Iterating the peeling procedure. The grey part is still a Brownian half-plane. The hull $B^\bullet_{r'}(\mathfrak{H}_r)$ is here centered
    at a point $\xx'$ which can be chosen as a function of $B^\bullet_r(\mathfrak{H})$.}
  \end{figure}

Motivated by the spatial Markov property described above, we compute several exact distributions related to the hull $B^\bullet_r(\HH)$.
In particular, the perimeter $Z_r$ of the hull $B^\bullet_r(\HH)$ (measuring the ``size'' of the topological boundary of the hull) is
exponentially distributed with mean $2r^2/3$. More precisely, Proposition \ref{formulas!} 
gives the joint distribution of $Z_r$, of the volume $V(B^\bullet_r(\HH))$,
and of the sizes of the subsets of $\partial\HH$ that have been ``swallowed'' by the hull on both sides of the
distinguished point $\xx$. Furthermore, we also study the perimeter process $(Z_r)_{r>0}$ and characterize its distribution 
as that of  a self-similar Markov process, which is associated via Lamperti's representation with a spectrally negative L\'evy process
whose Laplace exponent has a simple explicit form (Proposition \ref{peri-pro-ss}). 

The proof of our spatial Markov property (Theorems \ref{peeling-HP} and \ref{indep-peeling}) relies on related 
results obtained for the Brownian disk in \cite{LGR3}, and on a coupling between  the Brownian half-plane and
the Brownian disk with a large boundary size. As another application of this coupling, we derive a new representation 
of the Brownian half-plane $(\mathfrak{H},D,V,\Lambda)$ in Theorem \ref{new-const-half-plane}. Like previous constructions, this representation
involves random $\R$-trees equipped with (nonnegative) Brownian labels, However, in contrast with 
preceding work, labels now correspond to distances from the  ``negative half-boundary'' $\partial_1\HH:=\{\Lambda(t):t\leq 0\}$.
As a consequence, the process $(D(\Lambda(t),\partial_1\HH))_{t\geq 0}$ is distributed as a three-dimensional 
Bessel process. This should be compared with the identification of the processes $(D(\Lambda(t),\Lambda(0))_{t\geq 0}$ 
and $(D(\Lambda(-t),\Lambda(0))_{t\geq 0}$ as two independent five-dimensional Bessel processes, which follows
from the Caraceni-Curien construction of the Brownian half-plane \cite{CC}.  As a side remark, there are several constructions of the 
Brownian half-plane in terms of labeled trees, where labels may correspond either to distances from a distinguished 
point of the boundary \cite{CC}, or to ``distances from infinity'' \cite{BMR,GM}, or to distances from the boundary \cite{spine}. 
These constructions provide different pieces of information on the Brownian half-plane, but it is somewhat intriguing that
there is no simple direct way to prove that they all give rise to the same object (for this, one typically needs to
come back to the discrete approximations).

The spatial Markov property of Theorems \ref{peeling-HP} and \ref{indep-peeling} suggests several questions. In particular, one
may ask about describing a general class of simply connected open subsets $U$ of $\HH$ whose intersection with the boundary is an ``interval'' $\{\Lambda(t):\alpha<t<\beta\}$,
such that $\HH\backslash U$ equipped with an intrinsic metric is again a Brownian half-plane, which furthermore is independent of $U$
in an appropriate sense (there is an obvious analogy with the strong Markov property of Brownian motion). Iterating the peeling procedure described above gives examples of such subsets $U$ (see also Section \ref{sec:segment}). Another appealing but still vague problem
would be to characterize the Brownian half-plane by such a general form of the spatial Markov property, together with other properties 
to be specified. 

The paper is organized as follows. Section \ref{sec:preli} gives some preliminaries concerning curve-decorated metric spaces
and the Brownian snake excursion measures $\N_x$, which are our basic tools to construct Brownian trees 
equipped with Brownian labels. In Section \ref{Spa-Mar}, we recall the Caraceni-Curien construction of the 
Brownian half-plane, and we prove the spatial Markov property (Theorems \ref{peeling-HP} and \ref{indep-peeling}). 
Section \ref{sec:explicit} is devoted to the calculation of explicit distributions related to the hulls $B^\bullet_r(\HH)$.
In the same direction, two different characterizations of the perimeter process $(Z_r)_{r>0}$
are discussed in Section \ref{sec:peripro}. Finally, Section \ref{sec:new} presents our new
construction of the Brownian half-plane, and Section \ref{sec:segment} discusses an analog
of Theorem \ref{peeling-HP} for hulls centered on a segment of the boundary. 
\\
\\
\noindent{\bf Main notation.}
{\setlength{\leftmargini}{1em}
\begin{itemize}
  \item $\M^{\mathrm{GHPU}}$ space of curve-decorated compact measure metric spaces (Sect. 2.1)
  \vspace{-2mm}\item $\M^{\mathrm{GHPU}}_\infty$ space of curve-decorated boundedly compact measure length spaces (Sect. 2.1)
 \vspace{-2mm}\item  $\mathfrak{B}_r(\mathfrak{X})$ ball in $\mathfrak{X}$ viewed as a curve-decorated measure metric space (Sect. 2.1)
 \vspace{-2mm}\item $\mathfrak{W}$ set of all finite paths,\; $\mathfrak{W}_x$ set of all finite paths started at $x$ (Sect. 2.2)
\vspace{-2mm}\item $\zeta_{(\w)}$ lifetime of $\w\in\mathfrak{W}$ (Sect. 2.2)
\vspace{-2mm}\item $\wh\w=\w(\zeta_{(\w)})$ endpoint of $\w\in\mathfrak{W}$ (Sect. 2.2)
\vspace{-2mm}\item $\mathcal{S}$ set of all snake trajectories  (Sect. 2.2) 
 \vspace{-2mm}\item $(W_s)_{s\ge0}$ canonical process on $\mathcal{S}$ (Sect. 2.2)  
 \vspace{-2mm}\item $(\zeta_s)_{s\geq 0}$ lifetime process on $\mathcal{S}$ (Sect. 2.2)
 \vspace{-2mm}\item $\t_{(\omega)}$  {genealogical tree} of the snake trajectory $\omega\in \mathcal{S}$ (Sect. 2.2)
 \vspace{-2mm}\item $p_{(\omega)}$  canonical projection from $[0,\sigma(\omega)]$ onto $\t_{(\omega)}$ 
 \vspace{-2mm}\item $\ell_u(\omega):=\widehat{W}_s(\omega)$, for $s$ such that $p_{(\omega)}(s)=u$,  the label of $u\in \t_{(\omega)}$ (Sect. 2.2)  
\vspace{-2mm}\item $W_*(\omega)= \min\{\ell_u(\omega):~u\in\t_{(\omega)}\}$ the minimal label on $\t_{(\omega)}$ (Sect. 2.2)
\vspace{-2mm}\item $\tr_y(\omega)$ truncation of the snake trajectory at $y$ (Sect. 2.2)
\vspace{-2mm}\item $\N_x$ Brownian snake excursion measure from $x$ (Sect. 2.3)  
\vspace{-2mm}\item $(L^y_t)_{t\geq 0}$ exit local time at $y$ (Sect. 2.3)
\vspace{-2mm}\item $\mathcal{Z}_y$ exit measure at $y$ (Sect. 2.3)
\vspace{-2mm}\item $(\mathcal{R}_t)_{t\in\R}$ two-sided five-dimensional Bessel process started from $0$ (Sect. 3.1)
\vspace{-2mm}\item $\mathfrak{T}$ non-compact labeled $\mathbb{R}$-tree used to construct the Brownian half-plane (Sect. 3.1)
\vspace{-2mm}\item $(\mathcal{E}_t)_{t\in\R}$ clockwise exploration of $\mathfrak{T}$ (Sect. 3.1)
\vspace{-2mm}\item $(\mathfrak{H},D,V,\Lambda)$ curve-decorated Brownian half-plane associated with $\mathfrak{T}$ (Sect. 3.1)
\vspace{-2mm}\item $\Pi$ canonical projection from $\mathfrak{T}$ onto $\mathfrak{H}$ (Sect. 3.1)
\vspace{-2mm}\item $B_r(\mathfrak{H})$ closed ball of radius $r$ centered at $\Lambda(0)$ in $\mathfrak{H}$ (Sect. 3.1)
\vspace{-2mm}\item $B_r^\bullet(\mathfrak{H})$ hull of radius $r$ centered at $\Lambda(0)$ in $\mathfrak{H}$ (Sect. 3.1)
\vspace{-2mm}\item $\beta_r$ last passage time at $r$ of $(\sqrt{3}\,\mathcal{R}_{-t})_{t\geq 0}$ (Sect. 3.1) 
\vspace{-2mm}\item $\gamma_r$ last passage time at $r$ of $(\sqrt{3}\,\mathcal{R}_{t})_{t\geq 0}$ (Sect. 3.1) 
\vspace{-2mm}\item $Z_r$ perimeter of the hull $B_r^\bullet(\mathfrak{H})$ (Sect. 3.1)
\vspace{-2mm}\item $\mathfrak{H}_r$ closure of the complement of $B_r^\bullet(\mathfrak{H})$  (Sect. 3.1)
\vspace{-2mm}\item $\Lambda^{r}$ boundary curve of  $\mathfrak{H}_r$ (Sect. 3.1)
\vspace{-2mm}\item $\mathcal{V}_r$ volume of the hull $B_r^\bullet(\mathfrak{H})$ (Sect. 4.1)
\vspace{-2mm}\item  $(\be_t)_{0\leq t\leq 1}$ normalized Brownian excursion (Sect. 6.1)
\vspace{-2mm}\item $\mathfrak{T}^\star$ labeled tree used to construct the Brownian disk (Sect. 6.1)
\vspace{-2mm}\item $(\U,D_\star, \mathbf{V}_\star,\Pi_\star(0))$  free Brownian disk constructed from $\mathfrak{T}^\star$ (Sect. 6.1)
\vspace{-2mm}\item $B^\bullet_r(\mathfrak{H},[0,s])$ hull centered on the boundary segment $\Lambda([0,s])$ of $\mathfrak{H}$ (Sect. 7)
\end{itemize}}

\section{Preliminaries}

\label{sec:preli}

\subsection{Curve-decorated spaces}
\label{curve-deco}

We are interested in  (random) metric spaces equipped with additional structures and follow closely \cite{GM}. 
If $(E,d)$ is a compact metric space, we let $C_0(\R,E)$ be the space
of all continuous functions $\gamma:\R\la E$ such that, for every $\ve>0$,
there exists $T>0$ such that $d(\gamma(t),\gamma(T))<\ve $
and $d(\gamma(-t),\gamma(-T))<\ve $ for every $t\geq T$. By convention, if
$\gamma:[a,b]\la E$ is only (continuous and) defined on an interval $[a,b]$,
we view it as an element of $C_0(\R,E)$ by extending it so that it is constant on $(-\infty,a]$
and on $[b,\infty)$. 
Following \cite{GM}, we say that a curve-decorated (compact) measure 
metric space is a compact metric space $(E,d)$ equipped with a finite Borel measure $\mu$
(sometimes called the volume measure) and
with a curve $\gamma\in C_0(\R,E)$ --- we then often view $\gamma(0)$ as
the distinguished point of $E$. We write $\M^{\mathrm{GHPU}}$ for the set of all
isometry classes of curve-decorated compact measure metric spaces (here $(E,d,\mu,\gamma)$
and $(E',d',\mu',\gamma')$ are isometry equivalent if there exists an isometry $\Phi$ from 
$E$ onto $E'$ such that $\Phi_*\mu=\mu'$ and $\gamma'=\Phi\circ\gamma$).
One can equip $\M^{\mathrm{GHPU}}$ with the so-called
Gromov-Hausdorff-Prokhorov-uniform distance $\dd_{\mathrm{GHPU}}$, which is defined by
\begin{align*}
&\dd_{\mathrm{GHPU}}\big((E_1,d_1,\mu_1,\gamma_1),(E_2,d_2,\mu_2,\gamma_2)\big)\\
&:=\inf\Big\{d^E_{\mathrm{H}}\big(\Phi_1(E_1),\Phi_2(E_2)\big) \vee d^E_\mathrm{P}\big((\Phi_{1})_*\mu_1 ,(\Phi_2)_*\mu_2\big)
\vee\sup_{t\in\R}d\big(\Phi_1\circ\gamma_1(t),\Phi_2\circ\gamma_2(t)\big)\Big\},
\end{align*}
where the infimum is over all isometric embeddings $\Phi_1:E_1\la E$ and $\Phi_2:E_2\la E$ of 
$E_1$ and $E_2$ into the same compact metric space $(E,d)$, $d^E_\mathrm{H}$ is the usual Hausdorff distance 
between compact subsets of $E$, and $d^E_{\mathrm{P}}$
denotes the Prokhorov metric on the space of all finite Borel measures on $E$. By \cite[Proposition 1.3]{GM}, 
$\dd_{\mathrm{GHPU}}$ is a complete
separable metric on $\M^{\mathrm{GHPU}}$.

We will also be interested in non-compact metric spaces. We again follow closely \cite{GM} and restrict ourselves to
length spaces for technical reasons. Recall that a metric space is called boundedly compact if
all closed balls are compact. We then let $\M^{\mathrm{GHPU}}_\infty$ denote the set 
of all (isometry classes of) $4$-tuples $\mathfrak{X}=(X,d,\mu,\gamma)$, where $(X,d)$ is a boundedly compact
length space, $\mu$ is a Borel measure on $X$ that assigns finite mass to
every compact subset of $X$, and $\gamma:\R\la X$
is a continuous curve in $X$. As previously, we identify $(X,d,\mu,\gamma)$ and $(X',d',\mu',\gamma')$
if there is an isometry $\Phi:X\la X'$ such that $\Phi_*\mu=\mu'$ and $\gamma'=\Phi\circ\gamma$.
We can then define a local version of $\dd_{\mathrm{GHPU}}$ as follows. If $\mathfrak{X}=(X,d,\mu,\gamma)
\in \M^{\mathrm{GHPU}}_\infty$, we first need to define the ball of radius $r>0$ of $\mathfrak{X}$
as an element of $\M^{\mathrm{GHPU}}$. To this end, for every $r>0$, we define
$$\underline{\tau}^\gamma_r :=(-r)\vee\sup\{t<0:d(\gamma(0),\gamma(t))=r\}\,,\quad 
\overline{\tau}^\gamma_r :=r\wedge\inf\{t>0:d(\gamma(0),\gamma(t))=r\},$$
with the usual conventions $\sup\varnothing=-\infty$ and $\inf\varnothing=+\infty$. We then define
$\mathfrak{B}_r\gamma\in C_0(\R,X)$ by setting $\mathfrak{B}_r\gamma(t)=\gamma((t\wedge \overline{\tau}^\gamma_r)\vee \underline{\tau}^\gamma_r )$
for every $t\geq 0$. Finally we define the ``ball'' $\mathfrak{B}_r(\mathfrak{X})$ as the curve-decorated (compact)
measure metric space
$$\mathfrak{B}_r(\mathfrak{X}):=(B_r(X),d_{|B_r(X)}, \mu_{|B_r(X)},\mathfrak{B}_r\gamma),$$
where $B_r(X)$ denotes the closed ball of radius $r$ centered at $\gamma(0)$ in $X$. The local Gromov-Hausdorff-Prokhorov-uniform distance
on $\M^{\mathrm{GHPU}}_\infty$ is then defined by
$$\dd_{\mathrm{GHPU}}^\infty(\mathfrak{X},\mathfrak{X}'):=\int_0^\infty e^{-r}\,\Big(1\wedge \dd_{\mathrm{GHPU}}\big(\mathfrak{B}_r(\mathfrak{X}),\mathfrak{B}_r(\mathfrak{X}')\big)\Big)\,\dd r.$$
According to \cite[Proposition 1.7]{GM}, $\dd_{\mathrm{GHPU}}^\infty$ is a complete separable metric on $\M^{\mathrm{GHPU}}_\infty$.

We will also need to deal with pointed measure metric spaces, which simply amounts to considering the special case 
where the decorating curve $\gamma$ is constant. Both in the compact and in the non-compact case, we can 
view the class of pointed spaces as a subclass of $\M^{\mathrm{GHPU}}$, resp. of $\M^{\mathrm{GHPU}}_\infty$, which we
equip with the restriction of the distance $\dd_{\mathrm{GHPU}}$, resp. of $\dd_{\mathrm{GHPU}}^\infty$. With any
curve-decorated space $(X,d,\mu,\gamma)$ we can associate the pointed space $(X,d,\mu,\gamma(0))$, and this
mapping is trivially continuous.

\subsection{Snake trajectories}
\label{sna-tra}

We now briefly present the formalism 
of snake trajectories that we will use to define our models of random geometry
(we refer to \cite{ALG} for more details).
A (one-dimensional) finite path $\w$ is a continuous mapping $\w:[0,\zeta]\la\R$, where  $\zeta=\zeta_{(\w)}\geq 0$ is called the lifetime of $\w$. We let 
$\mathfrak{W}$ denote the space of all finite paths, which is equipped with the
distance
$d_\mathfrak{W}(\w,\w'):=|\zeta_{(\w)}-\zeta_{(\w')}|+\sup_{t\geq 0}|\w(t\wedge
\zeta_{(\w)})-\w'(t\wedge\zeta_{(\w')})|$.
We denote the tip or endpoint of a path $\w\in \mathfrak{W}$ by $\wh \w=\w(\zeta_{(\w)})$, and
for $x\in\R$, we
set $\mathfrak{W}_x:=\{\w\in\mathfrak{W}:\w(0)=x\}$. The trivial element of $\mathfrak{W}_x$ 
with zero lifetime is identified with the point $x$ of $\R$. 

\smallskip
\noindent {\bf Definition}. Let $x\in\R$. 
A snake trajectory with initial point $x$ is a continuous mapping $s\mapsto \omega_s$
from $\R_+$ into $\mathfrak{W}_x$ 
that satisfies the following two properties:
\begin{enumerate}
\item[\rm(i)] We have $\omega_0=x$ and the number $\sigma(\omega):=\sup\{s\geq 0: \omega_s\not =x\}$,
called the duration of the snake trajectory $\omega$,
is finite (by convention $\sigma(\omega)=0$ if $\omega_s=x$ for every $s\geq 0$). 
\item[\rm(ii)] {\rm (Snake property)} For every $0\leq s\leq s'$, we have
$\omega_s(t)=\omega_{s'}(t)$ for every $t\in[0,\displaystyle{\min_{s\leq r\leq s'}} \zeta_{(\omega_r)}]$.
\end{enumerate} 

\smallskip
We write $\mathcal{S}_x$ for the set of all snake trajectories with initial point $x$
and $\mathcal{S}=\bigcup_{x\in\R}\mathcal{S}_x$ for the set of all snake trajectories. If $\omega\in \mathcal{S}$, we often write $W_s(\omega):=\omega_s$ and $\zeta_s(\omega):=\zeta_{(\omega_s)}$
for every $s\geq 0$.We equip $\mathcal{S}$ with the distance
$d_{\mathcal{S}}(\omega,\omega'):= |\sigma(\omega)-\sigma(\omega')|+ \sup_{s\geq 0} \,d_\mathfrak{W}(W_s(\omega),W_{s}(\omega'))$.
We notice that a snake trajectory $\omega$ is determined 
by the knowledge of the lifetime function $s\mapsto \zeta_s(\omega)$ and of the tip function $s\mapsto \wh W_s(\omega)$: See \cite[Proposition 8]{ALG}.

Let $\omega\in \mathcal{S}$ be a snake trajectory and $\sigma=\sigma(\omega)$. The lifetime function $s\mapsto \zeta_s(\omega)$ codes a
compact $\R$-tree, which will be denoted 
by $\t_{(\omega)}$ and called the {\it genealogical tree} of the snake trajectory. This $\R$-tree is the quotient space $\t_{(\omega)}:=[0,\sigma]/\!\sim$ 
of the interval $[0,\sigma]$
for the equivalence relation defined by setting
$s\sim s'$ if and only if $\zeta_s(\omega)=\zeta_{s'}(\omega)= \min_{s\wedge s'\leq r\leq s\vee s'} \zeta_r(\omega)$. We then equip
$\t_{(\omega)}$ with the distance induced by
$$d_{(\omega)}(s,s'):= \zeta_s(\omega)+\zeta_{s'}(\omega)-2 \min_{s\wedge s'\leq r\leq s\vee s'} \zeta_r(\omega),$$
 and we stress that $d_{(\omega)}(s,s')=0$ if and only if $s\sim s'$.
We write $p_{(\omega)}:[0,\sigma]\la \t_{(\omega)}$
for the canonical projection, and note that the mapping $[0,\sigma]\ni t\mapsto p_{(\omega)}(t)$ can be viewed as a
cyclic exploration of $\t_{(\omega)}$. By convention, $\t_{(\omega)}$ is rooted at the point
$\rho_{(\omega)}:=p_{(\omega)}(0)$, and the volume measure on $\t_{(\omega)}$ is defined as the pushforward of
Lebesgue measure on $[0,\sigma]$ under $p_{(\omega)}$. If $u,v\in\t_{(\omega)}$, we write $\llbracket u,v\rrbracket$ for
the geodesic segment between $u$ and $v$ in $\t_{(\omega)}$.
 The segment $\llbracket \rho_{(\omega)},u\rrbracket$
 is called the ancestral line of $u$, and we say that $u$ is a descendant of $v$
 if $v\in \llbracket \rho_{(\omega)},u\rrbracket$.

By property (ii) in the definition of  a snake trajectory, the condition $p_{(\omega)}(s)=p_{(\omega)}(s')$ implies that 
$W_s(\omega)=W_{s'}(\omega)$. So the mapping $s\mapsto W_s(\omega)$ can be viewed as defined on the quotient space $\t_{(\omega)}$.
For $u\in\t_{(\omega)}$, we set $\ell_u(\omega):=\wh W_s(\omega)$ for any $s\in[0,\sigma]$ such that $u=p_{(\omega)}(s)$, and we interpret $\ell_u(\omega)$ as a ``label'' assigned to the point $u$ of $\t_{(\omega)}$. 
We note  that the mapping $u\mapsto \ell_u(\omega)$ is continuous on $\t_{(\omega)}$, and we set $W_*(\omega):=\min\{\ell_u(\omega):u\in\t_{(\omega)}\}$.
We also observe that, for every $s\in[0,\sigma]$, the path $W_s(\omega)$ 
records the labels along the ancestral line of $p_{(\omega)}(s)$.

We next introduce the important truncation operation on snake trajectories. 
Let $x,y\in \R$ with $y<x$. For every $\w\in\mathfrak{W}_x$, set
$\tau_y(\w):=\inf\{t\in[0,\zeta_{(\w)}]: \w(t)=y\}\leq +\infty$.
Then, if 
$\omega\in \mathcal{S}_x$, we set, for every $s\geq 0$,
$$\eta_s(\omega):=\inf\Big\{t\geq 0:\int_0^t \mathrm{d}r\,\mathbf{1}_{\{\zeta_{(\omega_r)}\leq\tau_y(\omega_r)\}}>s\Big\}.$$
Note that the condition $\zeta_{(\omega_r)}\leq\tau_y(\omega_r)$ holds if and only if $\tau_y(\omega_r)=\infty$ or $\tau_y(\omega_r)=\zeta_{(\omega_r)}$.
Then, setting $\omega'_s=\omega_{\eta_s(\omega)}$ for every $s\geq 0$ defines an element $\omega'$ of $\mathcal{S}_x$,
which will be denoted by  $\tr_y(\omega)$ and called the truncation of $\omega$ at $y$
(see \cite[Proposition 10]{ALG}). The effect of the time 
change $\eta_s(\omega)$ is to ``eliminate'' those paths $\omega_s$ that hit $y$ and then survive for a positive
amount of time. 
Informally, $\t_{(\tr_y(\omega))}$ is obtained from 
$\t_{(\omega)}$ by pruning branches at the level where labels first take the value $y$. 

We finally introduce the excursions of a snake trajectory away from a given level. Let $\omega\in \mathcal{S}_x$
and  $y<x$. Let $(\alpha_j,\beta_j)$, $j\in J$, be the connected components of the open set
$\{s\in[0,\sigma]:\tau_y(\omega_s)<\zeta_{(\omega_s)}\}$,
and notice that, for every $j\in J$, we have $\omega_{\alpha_j}=\omega_{\beta_j}$, and $\zeta_s>\zeta_{(\omega_{\alpha_j})}$ for every $s\in(\alpha_j,\beta_j)$.
For every $j\in J$, we define a snake trajectory $\omega^j\in\mathcal{S}_y$ by setting
$$\omega^j_{s}(t):=\omega_{(\alpha_j+s)\wedge\beta_j}(\zeta_{(\omega_{\alpha_j})}+t)\;,\hbox{ for }0\leq t\leq \zeta_{(\omega^j_s)}
:=\zeta_{(\omega_{(\alpha_j+s)\wedge\beta_j})}-\zeta_{(\omega_{\alpha_j})}\hbox{ and } s\geq 0.$$
We say that $\omega^j$, $j\in J$, are the excursions of $\omega$ away from $y$. We note that, for every $j\in J$,
the tree $\t_{(\omega^j)}$ is canonically identified to a subtree of $\t_{(\omega)}$ consisting of
descendants of $p_{(\omega)}(\alpha_j)=p_{(\omega)}(\beta_j)$. 

\subsection{The Brownian snake excursion 
measure on snake trajectories}
\label{sna-mea}

Let $x\in\R$. The Brownian snake excursion 
measure $\N_x$ is the $\sigma$-finite measure on $\mathcal{S}_x$ that satisfies the following two properties: Under $\N_x$,
\begin{enumerate}
\item[(i)] the distribution of the lifetime function $(\zeta_s)_{s\geq 0}$ is the It\^o 
measure of positive excursions of linear Brownian motion, normalized so that, for every $\ve>0$,
$$\N_x\Big(\sup_{s\geq 0} \zeta_s >\ve\Big)=\frac{1}{2\ve};$$
\item[(ii)] conditionally on $(\zeta_s)_{s\geq 0}$, the tip function $(\wh W_s)_{s\geq 0}$ is
a Gaussian process with mean $x$ and covariance function 
$$K(s,s'):= \min_{s\wedge s'\leq r\leq s\vee s'} \zeta_r.$$
\end{enumerate}
The measure $\N_x$ is the excursion measure away from $x$ for the 
Markov process in $\mathfrak{W}_x$ called the Brownian snake.
We refer to 
\cite{Zurich} for a detailed study of the Brownian snake. 
For every $y<x$, we have
\begin{equation}
\label{hittingpro}
\N_x(W_*\leq y)={\displaystyle \frac{3}{2(x-y)^2}},
\end{equation}
where we recall the notation $W_*(\omega)$ for the minimal label on $\t_{(\omega)}$. See e.g. \cite[Lemma 2.1]{LGW}.

\paragraph{Exit measures.} Let $x,y\in\R$, with $y<x$. One shows \cite[Proposition 34]{Disks} that the limit
\begin{equation}
\label{formu-exit}
L^y_t(\omega):=\lim_{\ve \da 0} \frac{1}{\ve^2} \int_0^t \dd s\,\mathbf{1}_{\{\tau_y(W_s(\omega))=\infty,\, \wh W_s(\omega)<y+\ve\}}
\end{equation}
exists uniformly in $t\geq 0$, $\N_x(\dd\omega)$ a.e., and defines a continuous nondecreasing function, which is 
obviously constant on $[\sigma,\infty)$. 
The process $(L^y_t)_{t\geq 0}$ is called the exit local time at $y$, and the exit measure 
$\z_y$ is defined by $\z_y:=L^y_\infty=L^y_\sigma$. Informally, $\z_y$ measures ``how many'' paths 
$W_s$  hit $y$ and are stopped at that hitting time. Then, $\N_x$ a.e., the topological support of the measure 
$\dd L^y_t$ is exactly the set $\{s\in[0,\sigma]:\tau_y(W_s)=\zeta_s\}$, and, in particular, $\z_y>0$ if and only if one of the paths $W_s$ hits $y$. The definition of $\z_y$
is a special case of the theory of exit measures (see \cite[Chapter V]{Zurich} for this general theory). 

The exit measure $\z_y$ is a function of the truncated snake trajectory $\tr_y(\omega)$. Indeed, a time change 
argument shows that the same formula \eqref{formu-exit} applied to $\tr_y(\omega)$ instead of $\omega$ yields
a continuous limit $t\mapsto L^y_t(\tr_y(\omega))$ which is equal to $L^y_{\eta_t}(\omega)$, where $(\eta_s(\omega))_{s\geq 0}$ 
is the time change used to define 
$\tr_y(\omega)$ at the end of Section~\ref{sna-tra}. In particular, $L^y_\infty(\tr_y(\omega))=L^y_\infty(\omega)=\z_y(\omega)$. 

\paragraph{The special Markov property.} Recall the notation introduced in  Section \ref{sna-tra}: for $y<x$ and $\omega\in\mathcal{S}_x$, we write  $\omega^j$, $j\in J$, for  the excursions of $\omega$ 
away from $y$, and $(\alpha_j,\beta_j)$, $j\in J$, for the corresponding subintervals of $[0,\sigma]$. The special Markov property states that, under $\N_x$, conditionally on the truncation $\tr_y(\omega)$,  the point measure
\begin{equation}\label{measure:special:mark}
\sum \limits_{j\in J} \delta_{(L^y_{\alpha_j},\omega^{j})}(\dd t\, \dd\omega')
\end{equation}
is Poisson with intensity $\mathbf{1}_{[0,\z_y]}(t)\,\dd t \,\mathbb{N}_y(\dd \omega')$. We refer to the Appendix of \cite{subor} for a proof. In the following,
we will use a minor extension of this result. Suppose that $x>y>0$, then the special Markov property holds in exactly the
same form under $\N_x(\cdot\cap\{W_*>0\})$, provided that the intensity measure is replaced by $\mathbf{1}_{[0,\z_y]}(t)\,\dd t \,\mathbb{N}_y(\dd \omega'\cap\{W_*(\omega')>0\})$.
We leave the easy proof to the reader.

\section{Spatial Markov property in the Brownian half-plane}
\label{Spa-Mar}

\subsection{The Caraceni-Curien construction of the Brownian half-plane}
\label{car-cur}

In this section, we present the Caraceni-Curien construction of the Brownian half-plane, which we will
use to define the boundary curve of hulls. This construction was given in \cite{CC},
and it was shown in \cite{Repre}  that it is equivalent to the other construction proposed in \cite{GM,BMR} 
(still a different equivalent construction appears in \cite{spine}). 

Consider a process $\mathcal{R}=(\mathcal{R}_t)_{t\in\R}$ such that $(\mathcal{R}_t)_{t\geq 0}$ and $(\mathcal{R}_{-t})_{t\geq 0}$ are two
independent five-dimensional Bessel processes started from $0$. Conditionally
on the process $\r$, let $\mathcal{N}(\dd t\,\dd\omega)$ be a Poisson point measure
on $\R\times \mathcal{S}$ with intensity
$$2\,\mathbf{1}_{\{W_*(\omega)>0\}}\,\dd t\,\N_{\sqrt{3}\mathcal{R}_t}(\dd \omega).$$
We write
$$\mathcal{N}(\dd t\,\dd\omega)=\sum_{i\in I} \delta_{(t_i,\omega^i)}(\dd t\,\dd\omega).$$
We then let 
$\mathfrak{T}$ be the metric space which is obtained from the union
$$\R\cup \Bigg(\bigcup_{i\in I} \t_{(\omega^i)}\Bigg)$$
by identifying the root $\rho_{(\omega^i)}$ of $\t_{(\omega^i)}$ 
with the point $t_i$ of $\R$. The metric $\dd_{\mathfrak{T}}$ on $\mathfrak{T}$ is defined in the obvious manner, so that
$\mathfrak{T}$ equipped with $\dd_{\mathfrak{T}}$ is a (non-compact) $\R$-tree,
the restriction of $\dd_{\mathfrak{T}}$ to each tree $\t_{(\omega^i)}$ is 
the metric $d_{(\omega^i)}$, as defined in Section \ref{sna-tra}, and $\dd_{\mathfrak{T}}(u,v)=|v-u|$ if $u,v\in\R$. The volume measure on $\mathfrak{T}$
is the sum of the volume measures on $\t_{(\omega^i)}$, $i\in I$. 

We can define a clockwise exploration $(\ee_t)_{t\in\R}$
of $\mathfrak{T}$, informally by concatenating the functions $p_{(\omega^i)}$, $i\in I$,
in the order prescribed by the $t_i$'s, in such a way that $\ee_0= 0$, and
$\{\ee_t:~t\geq 0\}=\R_+\cup(\bigcup_{i\in I,t_i>0}\t_{(\omega^i)})$ (see \cite[Section 4.1]{LGR3} for a precise definition
in a slightly different setting). The exploration process $\ee$ allows us
to define ``intervals'' on $\mathfrak{T}$. For $u,v\in \R$, 
if $v<u$, we set $[u,v]_\infty:=[u,\infty)\cup(-\infty, v]$, and if $u\leq v$, we let $[u,v]_\infty:=[u,v]$ be the usual interval. Then, for any $a,b\in \mathfrak{T}$, there
is a smallest ``interval'' $[u,v]_\infty$ such that $\ee_u=a$ and $\ee_v=b$, and we set
$[a,b]_\infty:=\{\ee_t:t\in [u,v]_\infty\}$. 

Finally, we define labels on $\mathfrak{T}$. If $a\in \R$, we take $\ell_a:=\sqrt{3}\,\mathcal{R}_a$, and,
if $a\in \t_{(\omega^i)}$, we let $\ell_a$ be the label of $a$ in $\t_{(\omega^i)}$
(when $a$ is the root of $\t_{(\omega^i)}$ the two definitions are consistent). A straightforward extension of
Lemma 3.3 in \cite{hull}, using formula \eqref{hittingpro}, shows that labels on $\mathfrak{T}$ are transient, meaning that, for every $r\geq 0$, 
the set $\{a\in\mathfrak{T}:\ell_a\leq r\}$ is compact.

We  set, for every $a,b\in\mathfrak{T}$,
\begin{equation}
\label{D1}
D^\circ(a,b):=\ell_a+\ell_b - 2\max\Big(\min_{c\in[a,b]_\infty}\ell_c,\min_{c\in[b,a]_\infty}\ell_c\Big)
\end{equation}
and then
\begin{equation}
\label{D2}
D(a,b):=\inf_{a_0=a,a_1,\ldots,a_{p-1},a_p=b}\sum_{k=1}^p D^\circ(a_{k-1},a_k)
\end{equation}
where the infimum is over all choices of the integer $p\geq 1$
and of the points $a_1,\ldots,a_{p-1}$ in $\mathfrak{T}$. Obviously, $D(a,b)\leq D^\circ(a,b)$, and
one can prove that $D(a,b)=0$ if and only if $D^\circ(a,b)=0$. 

Then, $D$ is a pseudo-metric on $\mathfrak{T}$, and we set $\mathfrak{H}:=\mathfrak{T}/\{D=0\}$,
where (here and later) we use the notation $\mathfrak{T}/\{D=0\}$ to denote the quotient 
space of $\mathfrak{T}$ for the equivalence relation defined by saying that
$a\approx b$ if and only if $D(a,b)=0$.
We equip $\mathfrak{H}$ with the metric induced by $D$, which is still denoted by $D$.
The canonical projection from  $\mathfrak{T}$ onto $\mathfrak{H}$
is denoted by $\Pi$, and the volume 
measure $V$ on $\mathfrak{H}$ is the pushforward of the volume measure 
on $\mathfrak{T}$ under $\Pi$. We also introduce 
the ``boundary curve'' $\Lambda=(\Lambda(t))_{t\in\R}$, 
which is simply defined by setting $\Lambda(t)=\Pi(t)$ for every $t\in\R$. 
We note the easy bound $D(a,b)\geq |\ell_a-\ell_b|$, so that the property $\Pi(a)=\Pi(b)$
implies $\ell_a=\ell_b$.

The (random) curve-decorated measure metric space 
$(\mathfrak{H},D,V,\Lambda)$ (or any space having the same distribution)
is called the {\it curve-decorated Brownian half-plane}. It will sometimes be convenient to
consider the pointed measure metric space $(\mathfrak{H},D,V,\Lambda(0))$, to which 
we will refer as the Brownian half-plane. We stress that  $(\mathfrak{H},D)$ is boundedly compact since   any bounded subset of $(\mathfrak{H},D)$ is contained in
a set of the type $\Pi(\{a\in\mathfrak{T}:\ell_a\leq r\})$, which is compact by the transience of labels.
For any fixed $s\in\R$, we can replace $(\Lambda(t))_{t\in\R}$ by $(\Lambda(s+t))_{t\in\R}$ (resp.~$\Lambda(0)$ by $\Lambda(s)$), without changing the
distribution of $(\mathfrak{H},D,V,\Lambda)$ (resp.~of $(\mathfrak{H},D,V,\Lambda(0))$). This translation invariance property is not obvious from
the construction we have given, but follows from other constructions, especially the one
in \cite{BMR,GM}.

 We mention that $(\HH,D)$ is a length space
(this can be deduced from the fact that $D^\circ(a,b)$ is the length of a curve from 
$\Pi(a)$ to $\Pi(b)$ in $\HH$, see e.g.~Section 4.1 in \cite{spine}). 
The space $(\mathfrak{H},D)$ is homeomorphic to the usual closed half-plane, which makes it possible to
define its boundary $\partial\mathfrak{H}$, and $\partial\HH$ is exactly the 
range of $\Lambda$. It follows from the results of \cite{Hausdorff} that the volume measure $V$
coincides with the Hausdorff measure with gauge function $h(r)=c\,r^4\,\log\log(1/r)$, for a
suitable constant $c$. Moreover, $\Lambda$ is a standard boundary curve, meaning 
that the pushforward of Lebesgue measure under $\Lambda$ is the uniform measure on
the boundary, which may be defined by 
$$\langle\nu,\varphi\rangle =\lim_{\ve\to 0} \ve^{-2}\int \varphi(x)\,\mathbf{1}_{\{D(x,\partial \mathfrak{H})<\ve\}}\,V(\dd x),$$
for any bounded continuous function $\varphi$ on $\partial\mathfrak{H}$. In fact, once $\Lambda(0)$ is fixed,
this property characterizes the boundary curve $\Lambda$, up to the replacement of $(\Lambda(t))_{t\in\R}$ by $(\Lambda(-t))_{t\in\R}$.
Finally, we note that  the curve-decorated Brownian half-plane is scale invariant, in the sense
that, for every $\lambda>0$, $(\mathfrak{H},\lambda D,\lambda^4V,\Lambda(\lambda^{-2}\cdot))$
has the same distribution as $(\mathfrak{H},D,V,\Lambda)$.

To simplify notation, we set $\xx=\Lambda(0)=\Pi(0)$, which is viewed as the
distinguished point of $\mathfrak{H}$. It will be important to observe that distances 
from $\xx$ in $\mathfrak{H}$ correspond to labels on the tree $\mathfrak{T}$:
for every $a\in\mathfrak{T}$, we have
$$D(\xx,\Pi(a))=\ell_a,$$
as an easy consequence of formulas \eqref{D1} and \eqref{D2}.
As in Section \ref{curve-deco}, we denote the closed ball of radius $r$ centered at $\xx$ in $\mathfrak{H}$
by $B_r(\mathfrak{H})$. By the previous display, we have
$$B_r(\mathfrak{H})=\Pi(\{a\in\mathfrak{T}:\ell_a\leq r\}),$$
which is  a compact subset of $\mathfrak{H}$ as we already noted.

Let us now turn to hulls. Since  $\mathfrak{H}$ is boundedly compact with the topology of the closed half-plane, it follows that, for any $r>0$, the set $\mathfrak{H} \setminus B_r(\mathfrak{H})$ has  only one unbounded connected component. The hull of radius $r$, denoted by $B^\bullet_r(\mathfrak{H})$, is then defined as  the complement of this unique unbounded connected component.
It is useful 
to characterize the hull in terms of labels on $\mathfrak{T}$. To this end,
for every $a\in\mathfrak{T}\backslash\{0\}$, let 
$m_a$ denote the minimal label on the (unique) geodesic path from
$a$ to infinity in $\mathfrak{T}$ that does not contain $0$: if $a\in \t_{(\omega^i)}$, this geodesic path 
is the concatenation of the ancestral line of $a$ in $\t_{(\omega^i)}$ and the interval $[t_i,\infty)$ (if $t_i>0$) or $(-\infty,t_i]$ (if $t_i<0$).
We also set $m_0=0$. Then we
have
\begin{equation}
\label{formula-hull}
B^\bullet_r(\mathfrak{H})= \Pi(\{a\in \mathfrak{T}:m_a\leq r\}).
\end{equation}
The fact that $m_a>r$ implies that $\Pi(a)\notin B^\bullet_r(\mathfrak{H})$ is easy since
the image under $\Pi$ of the geodesic path from $a$ to $\infty$ in $\mathfrak{T}$ gives a
path from $\Pi(a)$ to $\infty$ in $\mathfrak{H}$ that does not intersect the 
ball $B_r(\mathfrak{H})$. The converse is a consequence of the
so-called cactus bound, which says that any path from $\Pi(a)$ to $\infty$ in
$\mathfrak{H}$ has to visit a point whose distance from $\xx$ is (at most) $m_a$ --- see Proposition 3.1
in \cite{Geodesics} for a version of this result for the Brownian sphere, whose proof is
easily adapted to the present setting.

It follows from \eqref{formula-hull} and the preceding observations that the topological boundary of
$B^\bullet_r(\mathfrak{H})$ can also be written as
\begin{equation}
\label{formu-bdry}
\partial B^\bullet_r(\mathfrak{H})=\Pi\Big(\{a\in\mathfrak{T}: \ell_a=r\hbox{ and }
\ell_b>r\hbox{ for every }b\in G_a\backslash\{a\}\}\Big),
\end{equation}
where we have written $G_a$ for the geodesic path from
$a$ to infinity in $\mathfrak{T}$ that does not contain $0$. In particular, if
$$\beta_r:=\sup\{s\geq 0 : \sqrt{3}\,\mathcal{R}_{-s}\leq r\}\;,\quad \gamma_r:=\sup\{s\geq 0: \sqrt{3}\,\mathcal{R}_s\leq r\},$$
both $\Pi(-\beta_r)$ and $\Pi(\gamma_r)$ belong to $\partial\mathfrak{H}\cap \partial B^\bullet_r(\mathfrak{H})$. In fact,
$\partial B^\bullet_r(\mathfrak{H})$ is the range of a simple path starting at $\Pi(-\beta_r)$
and ending at $\Pi(\gamma_r)$, which does not intersect $\partial\mathfrak{H}$
except at its endpoints. This path is constructed in Lemma \ref{bdry-curve} below, but we first need to introduce some notation.
We consider the exit local times 
$(L^r_s(\omega^i))_{s\geq 0}$ for all $i\in I$ such that $t_i\notin [-\beta_r,\gamma_r]$. 
For every such index $i$, we also set $\alpha_i=\inf\{s\in\R: \ee_s\in\t_{(\omega^i)}\}$.
Finally, for every $s\in\R$, we set
$$L^{\HH,r}_s:=\sum_{i\in I: t_i\notin [-\beta_r,\gamma_r]} L^r_{(s-\alpha_i)^+}(\omega^i),$$
which represents the total exit local time at $r$ accumulated by the exploration process
up to time $s$. We set 
$$Z_r:=L^{\HH,r}_\infty=\sum_{i\in I:t_i\notin [-\beta_r,\gamma_r]} \z_r(\omega^i).$$
We note that $Z_r<\infty$, a.s. Indeed, by \cite[Lemma 4.1]{hull} (see formula \eqref{eq:N:Z:W:*:>0} below),
$\N_x(\z_r\,\mathbf{1}_{\{W_*>0\}})=(r/x)^3$ for every $x>r$, and thus
$$\E[Z_r\mid \mathcal{R}]= 2\int_{(-\infty,-\beta_r)\cup(\gamma_r,\infty)} \frac{r^3}{(\sqrt{3}\mathcal{R}_t)^3}\,\dd t <\infty.$$
Similarly, the sets $\{i\in I:t_i<-\beta_r-\ve,\z_r(\omega^i)>0\}$
and  $\{i\in I:t_i>\gamma_r+\ve,\z_r(\omega^i)>0\}$ are finite, for every $\ve>0$, as a simple consequence of \eqref{hittingpro}. 
On the other hand, it also follows from \eqref{hittingpro} and properties of Bessel processes that the sets $\{i\in I:t_i<-\beta_r,\z_r(\omega^i)>0\}$
and  $\{i\in I:t_i>\gamma_r,\z_r(\omega^i)>0\}$ are both infinite (we use the fact that $\int_{\gamma_r}^{\gamma_r+\ve} (\sqrt{3}\mathcal{R}_t-r)^{-2}\dd r=\infty$,
for every $\ve>0$). 

We write
$Z_r=Z'_r+Z''_r$, where
\begin{equation}\label{def:eq:Z:prime}
Z'_r:=\sum_{i\in I:t_i\in(-\infty,-\beta_r)} \z_r(\omega^i)\;,\quad Z''_r:=\sum_{i\in I:t_i\in(\gamma_r,\infty)} \z_r(\omega^i).
\end{equation}

\begin{lemma}
\label{bdry-curve}
Define $(\kappa(u),u\in [0,Z_r])$ by setting
$$\begin{array}{ll}
\kappa(Z'_r-u):=\inf\{s\in\R: L^{\HH,r}_s\geq u\},\quad&\hbox{if }0<u\leq Z'_r\,,\\
\noalign{\smallskip}
\kappa(Z'_r+u):=\inf\{s\in\R: L^{\HH,r}_s\geq Z_r-u\},&\hbox{if }0\leq u<Z''_r\,,\\
\end{array}
$$
and $\kappa(Z_r)=\kappa(Z_r-)$. Then, $\ee_{\kappa(0)}=-\beta_r$ and $\ee_{\kappa(Z_r)}=\gamma_r$, and the path
$\Lambda^{\bullet,r}$ defined by setting
$\Lambda^{\bullet,r}(u):=\Pi(\ee_{\kappa(u)})$ for every $u\in[0,Z_r]$ is continuous and injective, and
its range is exactly $\partial B^\bullet_r(\HH)$.
\end{lemma}

We will   $Z_r$ as the perimeter of the hull $B^\bullet_r(\HH)$. To understand the intuitive meaning of the definition 
of $\Lambda^{\bullet,r}$, note that $\partial B^\bullet_r(\HH)$ consists of all points $x$ 
with label $r$ such that
there is a continuous path from $x$ to $\infty$ in $\HH$ that visits only points with label greater than $r$ (except at the initial point). 
Leaving aside the points $-\beta_r$ and $\gamma_r$, these are exactly the points $x=\Pi(a)$ such that $a$
belongs to a tree $\t_{(\omega^i)}$ with $t_i\notin[-\beta_r,\gamma_r]$, 
the label of $a$ is $r$, and the ancestral line of $a$ in $\t_{(\omega^i)}$ contains only points with label greater than $r$ ($a$ excepted). The exit local times of the snake trajectories $\omega^i$
with $t_i\notin[-\beta_r,\gamma_r]$ provide a
natural measure on the set of such points $a$, which in turn is used to construct $\Lambda^{\bullet,r}$.

\proof We already noted that the set  $\{i\in I:-\beta_r-\ve<t_i<-\beta_r,\,\z_r(\omega^i)>0\}$ is infinite for every $\ve>0$, and it
easily follows that $\ee_{\kappa(0)}=-\beta_r$. A symmetric argument gives  $\ee_{\kappa(Z_r)}=\gamma_r$.
We then observe that $\Pi(\ee_{\kappa(u)})$ belongs to $\partial B^\bullet_r(\HH)$, for every $u\in[0,Z_r]$.
If $u=0$ or $u=Z_r$, this is immediate from \eqref{formu-bdry}. Then, if $u\in(0,Z_r)$, $\ee_{\kappa(u)}$ belongs to
a tree $\t_{(\omega_i)}$ with $t_i\notin [-\beta_r,\gamma_r]$, and the fact that
$\kappa(u)$ is an increase time of the mapping $s\mapsto L^{\HH,r}_s$ implies
that the label of $\ee_{\kappa(u)}$ is $r$, whereas labels along its ancestral line are greater than $r$. Then we 
use again \eqref{formu-bdry}.

By construction, the mapping $u\mapsto \kappa(u)$ is right-continuous on $[0,Z_r)$.
Then the support property of exit local times implies that, for every $i\in I$ such that
$t_i\notin [-\beta_r,\gamma_r]$, the support of the measure $\dd L^r_s(\omega^i)$ is exactly 
the set $\{s:\tau_r(\omega^i_s)=\zeta_{(\omega^i_s)}\}$. It follows that, a.s. for every $u\in(0,Z_r)$
such that $\ee_{\kappa(u-)}\not=\ee_{\kappa(u)}$, we have $\ell_{\ee_s}>r$ for every $s\in(\kappa(u-),\kappa(u))$, and, 
since we know that $\ell_{\ee_{\kappa(u)}}= \ell_{\ee_{\kappa(u-)}}=r$, the very
definition of $D^\circ$ gives $D^\circ(\ee_{\kappa(u-)},\ee_{\kappa(u)})=0$ and thus $\Pi(\ee_{\kappa(u-)})=\Pi(\ee_{\kappa(u)})$.
This shows that the mapping $u\mapsto \Pi(\ee_{\kappa(u)})$ is continuous. 
The injectivity of this mapping is a consequence of the special Markov property, which implies that $\min\{\ell_{\ee_s}:s\in [\kappa(u),\kappa(v)]\}<r$
(and thus $D^\circ(\ee_{\kappa(u)},\ee_{\kappa(v)})>0 $)
whenever  $0\leq u<v\leq Z_r$. Finally, to verify that any point of $\partial B^\bullet_r(\HH)$ is in the range of $\Lambda^{\bullet,r}$,
we argue as follows. By \eqref{formu-bdry}, a  point of $\partial B^\bullet_r(\HH)$ 
different from both $\Pi(-\beta_r)$ and $\Pi(\gamma_r)$ is of the form $\Pi(a)$, with 
$a\in\t_{(\omega_i)}$ for some $i$ such that $t_i\notin [-\beta_r,\gamma_r]$, and $\ell_a(\omega^i)=r$ and $\ell_b(\omega^i)>r$ for any
other point $b$ of the ancestral line of $a$ in $\t_{(\omega^i)}$. If we write $a=p_{(\omega_i)}(s)$
with $s\in[0,\sigma(\omega^i)]$, the support property of exit local times
ensures that $s$ must be an increase time of $L^r(\omega_i)$, and it follows that $a=\ee_{\kappa(u)}$ for some $u\in(0,Z_r)$ as desired. \endproof

We also observe that
$\{\Lambda^{\bullet,r}(u):u\in(0,Z_r)\}$ does not intersect $\partial \HH$. This follows from the injectivity in the preceding lemma,
since formula \eqref{formu-bdry} shows that $\partial B^\bullet_r(\HH)\cap \partial\HH=\{\Pi(-\beta_r),\Pi(\gamma_r)\}$.
By Jordan's theorem, $B^\bullet_r(\HH)\backslash (\partial B^\bullet_r(\HH)\cup\Pi([-\beta_r,\gamma_r]))$ 
is homeomorphic to the open unit disk, and therefore path-connected. It follows that
$$B^\circ_r(\mathfrak{H}):=B^\bullet_r(\HH)\backslash \partial B^\bullet_r(\HH)$$ is also path-connected.
We shall be interested in the space 
$$\mathfrak{H}_r:=\mathfrak{H}\backslash B^\circ_r(\mathfrak{H}).$$
Again by Jordan's theorem, $\HH_r$ is also homeomorphic to the half-plane, and we can define 
its boundary curve as follows. For every $t\in\R$, we set
\begin{equation}
\label{bd-curve}
\Lambda^r(t):=\left\{\begin{array}{ll}
\Lambda(t-\beta_r)\quad&\hbox{if }t\leq 0,\\
\Lambda^{\bullet,r}(t)&\hbox{if } 0\leq t\leq Z_r,\\
\Lambda(\gamma_r+(t-Z_r))&\hbox{if } t\geq Z_r.
\end{array}
\right.
\end{equation}

\subsection{Peeling the Brownian half-plane}

In this section, we present a version of the spatial Markov property for the (curve-decorated) Brownian half-plane. 
We again fix $r>0$, and consider the space $\mathfrak{H}_r$ defined in the previous section. We write $V_r$ for the restriction 
of the volume measure $V$ to $\HH_r$, and also recall the definition of the boundary curve $\Lambda^r$.
Finally, we write $D_r(x,y)$ for the intrinsic metric on the interior $\mathfrak{H}\backslash B^\bullet_r(\mathfrak{H})$ of $\HH_r$: 
for every $x,y\in \mathfrak{H}\backslash B^\bullet_r(\mathfrak{H})$, $D_r(x,y)$ is the  infimum of lengths of curves 
connecting $x$ to $y$ that stay in $\mathfrak{H}\backslash B^\bullet_r(\mathfrak{H})$ (lengths of course refer to the
distance $D$ on $\HH$).

\begin{theorem}
\label{peeling-HP} 
The intrinsic metric $D_r$ on $\mathfrak{H}\backslash B^\bullet_r(\mathfrak{H})$ has a continuous extension to $\HH_r$
which is a metric on $\HH_r$, and we keep the notation $D_r$ for this metric.
Then the (random) curve-decorated measure metric space 
$(\HH_r,D_r,V_r,\Lambda^r)$ is a curve-decorated Brownian half-plane. 
\end{theorem}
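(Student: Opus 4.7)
The plan is to derive Theorem~\ref{peeling-HP} from the analogous peeling result for the Brownian disk established in~\cite{LGR3}, by means of a coupling between the Brownian half-plane and the Brownian disk of large boundary length.

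\textbf{Step 1: Continuous extension of $D_r$.} Since $(\HH,D)$ is a length space and $\partial B^\bullet_r(\HH)$ is the range of the continuous simple curve $\Lambda^{\bullet,r}:[0,Z_r]\to\HH$, any pair of points $x,y\in\HH_r$ can be joined by a path contained in $\HH_r$: take a $D$-quasi-geodesic from $x$ to $y$ in $\HH$ and, whenever it enters $B^\circ_r(\HH)$, replace the corresponding piece by an arc of $\Lambda^{\bullet,r}$ running along $\partial B^\bullet_r(\HH)$. An Arzel\`a-Ascoli argument on $D_r$-quasi-geodesics (which remain in $B_R(\HH)\cap\HH_r$ for $R$ controlling $D(x,\xx)\vee D(y,\xx)$) then provides a continuous extension of $D_r$ to $\HH_r$, and this extension is a metric since $\HH_r$ is the closure of $\HH\setminus B^\bullet_r(\HH)$.

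\textbf{Step 2: Peeling of the Brownian disk.} I would next invoke the disk analog of Theorem~\ref{peeling-HP} proved in~\cite{LGR3}: for a Brownian disk of boundary length $L$, the complement of the hull of radius $r$ centered at the distinguished boundary point, equipped with the intrinsic metric, the restricted volume measure, and the natural boundary curve, is itself a Brownian disk, whose (random) boundary length equals the perimeter of the hull plus the portion of the original boundary that has not been swallowed.

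\textbf{Step 3: Passage to the limit via coupling.} The Brownian disk of boundary length $L$, pointed at a distinguished boundary point, converges as $L\to\infty$, in the local topology $\dd_{\mathrm{GHPU}}^\infty$, to the Brownian half-plane~\cite{BMR,GM}. Because the hull $B^\bullet_r(\HH)$, its boundary curve $\Lambda^{\bullet,r}$, the perimeter $Z_r$, and the two swallowed boundary segments are all determined by the geometry within an almost surely finite ball around $\xx$, the disk-peeling identity transfers to the limit. Simultaneously, the unexplored region in the disk is itself a Brownian disk whose boundary length tends to infinity with $L$, and hence, after suitable repointing, converges to a Brownian half-plane by the very same coupling. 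Matching the two limits identifies $(\HH_r,D_r,V_r,\Lambda^r)$ in distribution as a curve-decorated Brownian half-plane.

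The main obstacle is the control of the intrinsic metric $D_r$ under the coupling. The quantity $D_r$ is non-local, and a priori $D_r$-geodesics between points close to $\partial B^\bullet_r(\HH)$ could visit regions far from $\xx$ where the disk-to-half-plane agreement is weaker. I would overcome this by combining the scale invariance of $\HH$ with Bessel-type estimates on the labels (and hence on distances from $\xx$) to show that, with probability tending to one, all $D_r$-geodesics between points in $B_R(\HH)\cap\HH_r$ remain inside $B_{R'}(\HH)$ for some deterministic $R'=R'(R)$. Once this localization is established, the local GHPU convergence of the disk to the half-plane, restricted to $B_{R'}$, is strong enough to transfer the intrinsic-metric identity to the limit.
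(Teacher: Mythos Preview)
Your overall strategy --- reduce to the disk peeling theorem of \cite{LGR3} via a coupling of the Brownian half-plane with a Brownian disk of large boundary length --- is exactly the route the paper takes. However, two of your steps deviate from the paper in ways that matter.

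\textbf{Step 1 has a genuine gap.} Your plan to replace portions of a near-geodesic by arcs of $\Lambda^{\bullet,r}$ along $\partial B^\bullet_r(\HH)$ does not work, because $\partial B^\bullet_r(\HH)$ is a fractal curve of Hausdorff dimension $2$ and hence has infinite $D$-length; running along it contributes infinite length, and no Arzel\`a--Ascoli compactness argument can be launched from that. The continuous extension of $D_r$ to $\HH_r$ is in fact nontrivial: one must show that $D_r(x_n,y_m)\to 0$ whenever $(x_n)$ is a sequence in $\HH\setminus B^\bullet_r(\HH)$ converging to a point of $\partial B^\bullet_r(\HH)$. The paper does \emph{not} prove this directly; it transfers it from the disk via the coupling. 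Concretely, for large $A$ and large $S$ one couples $\HH$ with a free Brownian disk $\D_{(S)}$ so that, with high probability, there is a measure-preserving isometry $\mathfrak{I}:B^\bullet_A(\D_{(S)})\to B^\bullet_A(\HH)$ mapping $B^\bullet_1(\D_{(S)})$ onto $B^\bullet_1(\HH)$. Since \cite[Theorem~22]{LGR3} already gives the continuous extension of the intrinsic metric on $\D_{(S)}\setminus B^\bullet_1(\D_{(S)})$, the required Cauchy property for $D_1$ near $\partial B^\bullet_1(\HH)$ follows by pushing through $\mathfrak{I}$.

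\textbf{The localization is much easier than you suggest.} You identify as the ``main obstacle'' the need to control $D_r$-geodesics and propose scale-invariance and Bessel-type estimates. The paper handles this with a one-line triangle-inequality argument: take $a=(A/2)-1$; if $x',y'\in B_a(\HH_1)$, then $D_1(x',y')\leq 2a\leq A-2$, whereas any curve from $x'$ to $y'$ that exits $B^\bullet_A(\HH)$ has $D$-length at least $A$. Hence only curves staying inside $B^\bullet_A(\HH)$ are relevant to the infimum defining $D_1(x',y')$, and on this region the isometry $\mathfrak{I}$ identifies them with curves in $\D_{(S)}\setminus B^\bullet_1(\D_{(S)})$. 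This gives $\mathfrak{B}_a(\wt\D_{(S)})=\mathfrak{B}_a(\HH_1)$ on the coupling event, with no probabilistic estimates required. Since the complement $\wt\D_{(S)}$ is (conditionally on its perimeter $\wt Z_{(S)}$) a Brownian disk, and $\wt Z_{(S)}\to\infty$, a second application of the disk--half-plane coupling finishes the proof.
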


This theorem is an analog of Theorem 22 in \cite{LGR3}, which deals with the Brownian disk, and in fact we will use 
a coupling between the (curve-decorated) Brownian disk and the curve-decorated Brownian half-plane to reduce the proof to this statement.

\proof
By a scaling argument,
it is enough to consider the case $r=1$.
For every $S>0$, write 
$(\D_{(S)},D_{(S)},V_{(S)},\Lambda_{(S)})$ for a curve-decorated free Brownian disk with boundary size $S$ as defined in \cite[Section 4.1]{LGR3} 
(see also \cite{GM}). In view of the coupling with the Brownian half-plane, it will be convenient to make the convention that the
decorating curve $\Lambda_{(S)}$ is indexed by the interval $[-S/2,S/2]$ instead of $[0,S]$: If $\Lambda_{(S)}^\circ$ is the usual
decorating curve indexed by $[0,S]$, this simply means that we take $\Lambda_{(S)}(t)=\Lambda_{(S)}^\circ(t)$ for $t\in[0,S/2]$ and
$\Lambda_{(S)}(t)=\Lambda_{(S)}^\circ(S+t)$ for $t\in[-S/2,0]$. We will make this convention whenever we consider
the curve-decorated free Brownian disk.
We note that $\Lambda_{(S)}(-S/2)=\Lambda_{(S)}(S/2)$ and that
$\Lambda_{(S)}$ is a standard boundary curve, meaning that the 
pushforward of Lebesgue measure on $[-S/2,S/2]$ under $\Lambda_{(S)}$
is the uniform measure on $\partial\D_{(S)}$ (see \cite{LGR3}). The range of $\Lambda_{(S)}$ is the boundary $\partial\D_{(S)}$, and the distinguished point of 
$\D_{(S)}$ is $\xx_{(S)}:=\Lambda_{(S)}(0)$. 

For every
$a>0$ such that $a<D_{(S)}(\xx_{(S)},\Lambda_{(S)}(S/2))$, let $B^\bullet_a(\D_{(S)})$ stand for the hull of radius 
$a$ in $\D_{(S)}$ relative to the point $\Lambda_{(S)}(S/2)=\Lambda_{(S)}(-S/2)$. This means that $\D_{(S)}\backslash B^\bullet_a(\D_{(S)})$
is the connected component containing $\Lambda_{(S)}(S/2)$
of the complement of the closed ball of radius $a$ centered at $\xx_{(S)}$. By convention, if $a\geq D_{(S)}(\xx_{(S)},\Lambda_{(S)}(S/2))$,
we take $B^\bullet_a(\D_{(S)})=\D_{(S)}$.

Let $\delta\in(0,1)$ and $A>10$. By \cite[Proposition 4.2]{GM} (see also \cite[Corollary 3.9]{BMR} or \cite[Lemma 18]{Repre}) and a scaling argument,
we can find $S_0>0$ such that, for every $S\geq S_0$, we can couple the curve-decorated measure metric spaces $(\D_{(S)},D_{(S)},V_{(S)},\Lambda_{(S)})$
and $(\mathfrak{H},D,V,\Lambda)$ in such a way that the following holds with probability at least $1-\delta$: $B^\bullet_A(\D_{(S)})\not=\D_{(S)}$ and there is a measure-preserving 
isometry $\mathfrak{I}$ from $B^\bullet_A(\D_{(S)})$ onto $B^\bullet_A(\HH)$ such that 
\begin{equation}
\label{coup-cur}
\mathfrak{I}(\Lambda_{(S)}(t))= \Lambda(t),\ \hbox{if }
\sup\{s\leq 0:\Lambda_{(S)}(s)\notin B^\bullet_A(\D_{(S)})\}\leq t\leq \inf\{s\geq 0:\Lambda_{(S)}(s)\notin B^\bullet_A(\D_{(S)})\}.
\end{equation}
The preceding properties imply that the isometry $\mathfrak{I}$ maps $B^\bullet_1(\D_{(S)})$ onto $B^\bullet_1(\HH)$ and $\partial B^\bullet_1(\D_{(S)})$ onto $\partial B^\bullet_1(\HH)$.

Note that  \cite[Proposition 4.2]{GM} deals with Brownian disks with a fixed volume, but the result clearly holds also
for free Brownian disks. Moreover, \cite{GM} considers balls instead of hulls, but it is easy to verify
that balls can be replaced by hulls (notice that, both in $\mathbb{H}$ and in $\D_{(S)}$,  the
probability that the hull of radius $\ve$
is contained in the ball of radius $1$ centered at the distinguished point tends to
$1$ when $\ve\to 0$, and use again a scaling argument). 

The existence of the preceding coupling allows us to transfer properties valid in the Brownian
disk to the curve-decorated Brownian half-plane. Let $\wt\D_{(S)}$ be the closure of $\D_{(S)}\backslash B^\bullet_1(\D_{(S)})$, and write $\wt D_{(S)}$ for the
intrinsic metric on $\D_{(S)}\backslash B^\bullet_1(\D_{(S)})$. According to \cite[Theorem 22]{LGR3},  $\wt D_{(S)}$ has a continuous extension 
to $\wt\D_{(S)}$, which is a metric on $\wt\D_{(S)}$. 
In order to get that the intrinsic metric $D_1$ on $\mathfrak{H}\backslash B^\bullet_1(\mathfrak{H})$ has a continuous extension to $\HH_1$,
we need to verify that $D_1(x_n,x_m)$ tends to $0$ as $n,m\to\infty$, for any sequence $(x_n)$ in $\mathfrak{H}\backslash B^\bullet_1(\mathfrak{H})$ 
that converges to a point of $\partial B^\bullet_1(\mathfrak{H})$. However, except on an event of probability at most $\delta$, this 
follows from the preceding coupling and the fact that $\wt D_{(S)}(y_n,y_m)$ tends to $0$ as $n,m\to\infty$, for any sequence $(y_n)$ in $\D_{(S)}\backslash B^\bullet_1(\D_{(S)})$
that converges to a point of $\partial B^\bullet_1(D_{(S)})$. Similarly, we get the fact that the extension of $D_1$ to $\HH_1$
is a metric on $\HH_1$ from the corresponding statement in \cite[Theorem 22]{LGR3}.

From now on, we argue on the event of probability at least $1-\delta$ on which one can define the isometry $\mathfrak{I}$. Recall the definition
of $\beta_1$ in the previous section, and note that
$$-\beta_1=\sup\{s\leq 0: \Lambda(s)\notin B^{\bullet}_1(\HH)\}= \sup\{s\in[-S/2,0]: \Lambda_{(S)}(s)\notin B^{\bullet}_1(\D_{(S)})\},$$
where the second equality follows from \eqref{coup-cur} and the fact that $\mathfrak{I}$ maps $B^\bullet_1(\D_{(S)})$ to $B^\bullet_1(\HH)$.
We then define $\wt\xx_{(S)}=\Lambda_{(S)}(-\beta_1)$, and note that $\mathfrak{I}(\wt\xx_{(S)})=\xx_1:=\Lambda(-\beta_1)=\Lambda^1(0)$,
where $\Lambda^1$ was defined in \eqref{bd-curve}.

By \cite[Theorem 22]{LGR3},
we know that $(\wt\D_{(S)},\wt D_{(S)})$ equipped with the restriction $\wt V_{(S)}$ of the 
volume measure on $\D_{(S)}$ and the distinguished point $\wt\xx_{(S)}$  is a free
Brownian disk with a random boundary size denoted by $\wt Z_{(S)}$, which is pointed at a uniform boundary point. In fact,
as discussed at the end of \cite[Section 4.2]{LGR3}, we can also equip $\wt\D_{(S)}$ with a standard 
boundary curve $(\wt\Lambda_{(S)}(t),t\in [-\wt Z_{(S)}/2,\wt Z_{(S)}/2])$, in such a way that  we have in particular $\wt\Lambda_{(S)}(0)=\wt \xx_{(S)}$ and
$\wt\Lambda_{(S)}(t)=\Lambda_{(S)}(-\beta_1+t)$ for every $t\in [-(S/2)+\beta_1,0]$ such that $t\geq -\wt Z_{(S)}/2$ --- 
note that our convention to index $\wt\Lambda^{(S)}$ by the interval $[-\wt Z_{(S)}/2,\wt Z_{(S)}/2]$ makes the definition of $\wt\Lambda_{(S)}$ look 
different than in \cite{LGR3}. Then, conditionally on $\wt Z_{(S)}$, 
the $4$-tuple $(\wt\D_{(S)},\wt D_{(S)}, \wt V_{(S)},\wt\Lambda_{(S)})$ is a curve-decorated free Brownian disk
with boundary size $\wt Z_{(S)}$.
It is easy to verify 
that the boundary size $\wt Z_{(S)}$ tends to $\infty$ in probability
as $S\to \infty$ (for instance, because $\wt Z_{(S)}\geq (S/2) -\beta_1$ on the event that we are considering).

Next set $a=(A/2)-1$, and write $B_a(\wt\D_{(S)})$ for the closed ball of radius $a$ centered 
at $\wt\xx_{(S)}$ in $\wt\D_{(S)}$.
Note that $B_a(\wt\D_{(S)})\subset B_{A/2}(\D_{(S)})\subset B^\bullet_{A/2}(\D_{(S)})$ (the first inclusion because $D_{(S)}(\wt\xx_{(S)},\xx_{(S)})=1$). If $x$ and $y$
are two points in $B_a(\wt\D_{(S)})\backslash \partial \wt\D_{(S)}$ and $x'=\mathfrak{I}(x)$ and $y'=\mathfrak{I}(y)$ are the 
corresponding points 
of $\mathfrak{H}\backslash B^\bullet_1(\mathfrak{H})$, the intrinsic distance
$\wt D_{(S)}(x,y)$ must coincide 
with  $D_1(x',y')$ --- the point is that a curve from $x'$ to $y'$ that exits $B^\bullet_A(\mathfrak{H})$
must have length greater than $A$, and thus can be disregarded when computing the 
intrinsic distance
between $x'$ and $y'$ (we know that the latter distance is bounded by $2a\leq A-2$, because both $x'$ and $y'$ are at distance at most $a$ from $\xx_1$). 
If $B_a(\HH_1)$ denotes the closed ball of radius $a$
centered at $\xx_1$ in $\HH_1$, we thus
get that $\mathfrak{I}$ induces an isometry from $B_a(\wt\D_{(S)})$ onto $B_a(\HH_1)$,
and  this isometry preserves the volume measures. Furthermore, as in Section \ref{curve-deco}, we 
can also introduce the curve-decorated spaces $\mathfrak{B}_a(\wt\D_{(S)})$ and $\mathfrak{B}_a(\HH_1)$
associated with $B_a(\wt\D_{(S)})$ and $B_a(\HH_1)$ respectively, 
so that, in particular, the decorating curve of $\mathfrak{B}_a(\wt\D_{(S)})$ is
$$\Big( \wt\Lambda_{(S)}(t): -a\vee \sup\{s\leq 0: \wt\Lambda_{(S)}(s)\notin B_a(\wt \D_{(S)})\}\leq t\leq a\wedge \inf\{s\geq 0:\wt\Lambda_{(S)}(s)\notin B_a(\wt \D_{(S)})\}\Big),$$
and the decorating curve of $\mathfrak{B}_a(\HH_1)$ is 
$$\Big( \Lambda^1(t): -a\vee\sup\{s\leq 0: \Lambda^1(s)\notin B_a(\HH_1)\}\leq t\leq a\wedge \inf\{s\geq 0:\Lambda^1(s)\notin B_a(\HH_1)\}\Big).$$
One then verifies that, except on a set of small probability when $S\to\infty$, these two curves are defined on the same interval, and the isometry $\mathfrak{I}$ maps the first one 
to the second one (we omit a few details here).

In conclusion,
one can couple $\mathfrak{H}_1$  and the Brownian disk $\wt\D_{(S)}$ in such a way that the balls $\mathfrak{B}_a(\wt\D_{(S)})$ and $\mathfrak{B}_a(\HH_1)$
coincide except on an event of arbitrarily small probability 
when $S$ is large. Recalling the coupling of \cite[Proposition 4.2]{GM} used in the proof, this also means that we can 
couple $\HH_1$ with a curve-decorated Brownian half-plane $\HH'$ in such a way that the balls of radius $a$ (again viewed as random curve-decorated measure metric spaces)
in both spaces coincide except on an event of arbitrarily small probability. Since this holds for any $a>0$, this suffices 
to prove that $(\HH_1,D_1,V_1,\Lambda^1)$ is a curve-decorated Brownian half-plane. \endproof

We will now show that the space $(\HH_r,D_r,V_r,\Lambda^r)$
in Theorem \ref{peeling-HP} is independent of the hull $B^\bullet_r(\mathfrak{H})$ also viewed as a
curve-decorated measure metric space. We first need to introduce the appropriate metric on $B^\bullet_r(\mathfrak{H})$.
We consider the subset $\kk_r$ of $\mathfrak{T}$
defined by
\begin{equation}
\label{form-hull}
\kk_r= [-\beta_r,\gamma_r]\cup\Big(\bigcup_{i\in I:\,t_i\in[-\beta_r,\gamma_r]}  \t_{(\omega^i)}\Big)\cup\Big(\bigcup_{i\in I:\,t_i\notin[-\beta_r,\gamma_r]} \{a\in \t_{(\omega^i)}:m_a\leq r\}\Big),
\end{equation}
where we recall that $m_a$ stands for the minimal label of $a$ along the geodesic path from $a$ to $\infty$ in $\mathfrak{T}$
that does not contain $0$. It follows from  \eqref{formula-hull} that
$B^\bullet_r(\HH)=\Pi(\kk_r)$. 

We mention the following simple fact. 
Let $a,b\in \kk_r$. Then, in formula \eqref{D1} defining $D^\circ(a,b)$, we may replace the intervals $[a,b]_\infty$
and $[b,a]_\infty$ by $[a,b]_\infty\cap\kk_r$
and $[b,a]_\infty\cap\kk_r$ respectively: the point is that, if the interval $[a,b]_\infty$ contains a point $c\notin \kk_r$,
then, necessarily, it contains another point $c'$ belonging to $\kk_r$ whose label is $r$ and is thus smaller than the label of $c$. 
Informally, the definition of $D^\circ(a,b)$, when $a,b\in \kk_r$ only depends on the labels on $\kk_r$, despite
the fact that the interval $[a,b]_\infty$ may not be contained in $\kk_r$. 

For every $a,b\in\kk_r$, we set
\begin{equation}
\label{pseudo-hull2}
D^{\bullet}_r(a,b) := \mathop{\inf \limits_{a_0,a_1,\ldots,a_p\in \kk_r}}_{a_0=a,\,a_p=b} \sum_{i=1}^p D^{\circ}(a_{i-1},a_i), 
\end{equation}
where the infimum is over all choices of the integer $p\geq 1$ and of the
finite sequence $a_0,a_1,\ldots,a_p$ in $\kk_r$ such that $a_0=a$ and
$a_p=b$. This is similar to the definition \eqref{D1} of $D(a,b)$, but we 
restrict the infimum to ``intermediate'' points $a_1,\ldots,a_{p-1}$ that belong to $\kk_r$. 
Clearly, we have $D(a,b)\leq D^\bullet_r(a,b)\leq D^{\circ}(a,b)$  for every $a,b\in \kk_r$. 
Since  the condition $D(a,b)=0$ can only hold if $D^{\circ}(a,b)=0$, we get that, for every $a,b\in \kk_r$, we have $D^\bullet_r(a,b)=0$ if and only if $D(a,b)=0$. Hence $D^\bullet_r$
induces a metric on $\Pi(\kk_r)=B^{\bullet}_r(\HH)$ and we keep the notation $D^\bullet_r$ for this metric.

Recall that
$B^\circ_r(\mathfrak{H})= B^\bullet_r(\HH)\backslash \partial B^\bullet_r(\HH)$ is the (topological) interior of $B^\bullet_r(\mathfrak{H})$.
Since $B^\circ_r(\mathfrak{H})$ is path-connected, we can define
an intrinsic metric on $B^\circ_r(\mathfrak{H})$. One can then verify that the restriction of $D^\bullet_r$ 
to $B^{\circ}_r(\HH)$ coincides with the intrinsic distance induced by $D$ on $B^{\circ}_r(\HH)$. We omit the details
but refer to the proof of
Proposition 6 in \cite{LGR3} for very similar arguments. 

We finally define a boundary curve for $B^\bullet_r(\HH)$. Recall from the previous section the
definition of the curve $\Lambda^{\bullet,r}=(\Lambda^{\bullet,r}(u))_{u\in(0,Z_r)}$ whose range is $\partial B^\bullet_r(\HH)$.
We set, for every $t\in [0,\gamma_r+\beta_r + Z_r]$,
$$\wh\Lambda^{\bullet,r}(t)=\left\{\begin{array}{ll}
\Lambda(-t)\quad&\hbox{if }0\leq t\leq \beta_r,\\
\Lambda^{\bullet,r}(t-\beta_r)&\hbox{if } \beta_r\leq t\leq \beta_r+Z_r,\\
\Lambda(\gamma_r+\beta_r + Z_r-t)&\hbox{if } \beta_r+Z_r\leq t\leq \gamma_r+\beta_r + Z_r.
\end{array}
\right.
$$
We let $V^{\bullet}_r$ denote the restriction of $V$ to $B^\bullet_r(\HH)$.

\begin{theorem}
\label{indep-peeling}
The random  curve-decorated measure metric spaces $(B^\bullet_r(\HH), D^\bullet_r,V^{\bullet}_r,\wh\Lambda^{\bullet,r})$ and $(\HH_r,D_r,V_r,\Lambda^r)$
are independent. 
\end{theorem}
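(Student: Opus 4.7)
I follow the strategy used for Theorem \ref{peeling-HP} and transfer the corresponding independence statement from the free Brownian disk via the coupling of \cite[Proposition 4.2]{GM}. The proof of \cite[Theorem 22]{LGR3} yields more than was quoted previously: given the curve-decorated hull $B^\bullet_1(\D_{(S)})$ of the free Brownian disk (equipped with its intrinsic metric, the restriction of $V_{(S)}$, and the appropriate boundary curve), the complement $(\wt\D_{(S)},\wt D_{(S)},\wt V_{(S)},\wt\Lambda_{(S)})$ is a free Brownian disk of boundary $\wt Z_{(S)}$ that, conditionally on $\wt Z_{(S)}$ (which is a measurable function of the hull), is independent of the hull. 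This conditional independence is the disk analog of Theorem \ref{indep-peeling} and serves as our input.

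By scaling, take $r=1$. For $\delta\in(0,1)$, $A>10$, $a:=A/2-1$, and $S$ large enough, I place myself on the event $E_{S,A}$ of probability at least $1-\delta$ on which the coupling of \cite[Proposition 4.2]{GM} provides the isometry $\mathfrak{I}:B^\bullet_A(\D_{(S)})\to B^\bullet_A(\HH)$ satisfying \eqref{coup-cur}. Using the remark preceding \eqref{pseudo-hull2}, together with the agreement of $D^\bullet_1$ with the intrinsic metric of the hull interior, $\mathfrak{I}$ restricts to a measure-preserving isometry from $(B^\bullet_1(\D_{(S)}),D^\bullet_1)$ onto $(B^\bullet_1(\HH),D^\bullet_1)$. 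Furthermore, \eqref{coup-cur} identifies the two arcs of $\partial\D_{(S)}$ of lengths $\beta_1$ and $\gamma_1$ adjacent to $\xx_{(S)}$ with the arcs of $\partial\HH$ entering the definition of $\wh\Lambda^{\bullet,1}$, and, via the exit-local-time parametrization of $\partial B^\bullet_1$, $\mathfrak{I}$ also matches the middle ``hull-boundary'' pieces. Thus $\mathfrak{I}$ sends the disk version of the curve-decorated hull to $(B^\bullet_1(\HH),D^\bullet_1,V^\bullet_1,\wh\Lambda^{\bullet,1})$. In parallel, as already shown in the proof of Theorem \ref{peeling-HP}, $\mathfrak{I}$ identifies the curve-decorated ball $\mathfrak{B}_a(\wt\D_{(S)})$ with $\mathfrak{B}_a(\HH_1)$.

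Combining these two identifications with the disk-level conditional independence, the pair $\big(\mathfrak{B}_a(\HH_1),\ (B^\bullet_1(\HH),D^\bullet_1,V^\bullet_1,\wh\Lambda^{\bullet,1})\big)$ has, on $E_{S,A}$ and conditionally on $\wt Z_{(S)}$, the same law as the independent product of $\mathfrak{B}_a$ of a free Brownian disk of boundary $\wt Z_{(S)}$ and of the curve-decorated hull. Since $\wt Z_{(S)}\to\infty$ in probability as $S\to\infty$ on $E_{S,A}$, and since \cite[Proposition 4.2]{GM} applied to the disk with boundary $\wt Z_{(S)}$ shows that $\mathfrak{B}_a$ of such a disk coincides, with probability tending to $1$ as $\wt Z_{(S)}\to\infty$, with $\mathfrak{B}_a$ of a curve-decorated Brownian half-plane, the conditional law of $\mathfrak{B}_a(\HH_1)$ given the hull converges to the (unconditional, deterministic) law of $\mathfrak{B}_a(\HH')$ for a curve-decorated Brownian half-plane $\HH'$. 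Sending $S\to\infty$, then $A\to\infty$, then $\delta\to 0$, we conclude that $\mathfrak{B}_a(\HH_1)$ is independent of $(B^\bullet_1(\HH),D^\bullet_1,V^\bullet_1,\wh\Lambda^{\bullet,1})$ for every $a>0$; by the definition of $\dd_{\mathrm{GHPU}}^\infty$, this upgrades to the stated independence of the two full spaces.

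\textbf{Main obstacle.} The crux is the promotion of the disk-level conditional independence (given the random perimeter $\wt Z_{(S)}$) to the unconditional independence in the half-plane. This is achieved via the observation that the local law of a free Brownian disk with large boundary is asymptotically insensitive to the exact value of the boundary size, so that the conditioning on $\wt Z_{(S)}$ becomes vacuous in the $S\to\infty$ limit; making this rigorous is essentially an application of \cite[Proposition 4.2]{GM} in the reverse direction, uniformly on $\{\wt Z_{(S)}\geq M\}$ for a large threshold $M$. The secondary technical point is the careful three-piece matching of the boundary curves $\wh\Lambda^{\bullet,1}$ and $\Lambda^1$ at the junction points $\Lambda(-\beta_1)$ and $\Lambda(\gamma_1)$, but this follows the same pattern as in the proof of Theorem \ref{peeling-HP} and introduces no new difficulties.
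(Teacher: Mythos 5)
Your proposal is correct and follows essentially the same route as the paper's proof: couple with a free Brownian disk of large boundary, import from Theorems 22 and 23 of \cite{LGR3} the independence of $\wt\D_{(S)}$ and the curve-decorated hull conditionally on $\wt Z_{(S)}$, observe that the conditioning on $\wt Z_{(S)}$ becomes vacuous as $\wt Z_{(S)}\to\infty$ (the paper phrases this as $\Theta_z(F\circ\mathfrak{B}_a)\to\Theta_\infty(F\circ\mathfrak{B}_a)$), and exhaust $\HH_1$ by the balls $\mathfrak{B}_a$. The paper organizes the passage to the limit through the displays \eqref{Mar4} and \eqref{Mar3}, whereas you state the limit interchange more heuristically, but the content is the same.
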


Theorem \ref{indep-peeling} may be compared to the analogous result for the Brownian disk \cite[Theorem~23]{LGR3}. In the setting
of the Brownian disk, the hull centered at a boundary point (and defined with respect to another distinguished boundary point) and its complement in the disk
become independent only after conditioning on their boundary sizes, and their respective conditional distributions depend only
on their (respective) boundary sizes. In the setting of the Brownian plane, the boundary size of the
hull complement is infinite and no conditioning is needed to get the independence in Theorem \ref{indep-peeling}.

\proof 
Again, we may take $r=1$.
We will derive Theorem \ref{indep-peeling} from \cite[Theorem 23]{LGR3} by the same coupling 
argument that we used to prove Theorem \ref{peeling-HP}, and we keep the notation 
of this proof. In particular, we consider the hull $B^\bullet_1(\D_{(S)})$ on
the event
$\{D_{(S)}(\xx_{(S)},\Lambda_{(S)}(S/2))>1\}$. As it is explained in \cite[Section 6.3]{LGR3},
we can equip $B^\bullet_1(\D_{(S)})$ with an (extended) intrinsic metric defined in a way
similar to the metric $D^\bullet_r$ on $B^\bullet_r(\HH)$ and with the restriction of the 
volume measure on $\D_{(S)}$, and we can also define a decorating curve
on $B^\bullet_1(\D_{(S)})$, which is analogous to $\wh\Lambda^{\bullet,r}$ (see the discussion
before \cite[Theorem 23]{LGR3} for more details). We write
$\mathfrak{B}^\bullet_1(\D_{(S)})$ for the resulting curve-decorated measure metric space. 
Similarly, we write $\mathfrak{B}^\bullet_1(\HH)$ for the curve-decorated measure metric space $(B^\bullet_1(\HH), D^\bullet_1,V^{\bullet}_1,\wh\Lambda^{\bullet,1})$.

Let $A>10$ and $a\in (0,(A/2)-1)$. Also fix $\delta>0$. As in the proof of Theorem \ref{peeling-HP}, 
for every large enough $S$, we can couple $(\D_{(S)},D_{(S)},V_{(S)},\Lambda_{(S)})$
and $(\mathfrak{H},D,V,\Lambda)$ in such a way that, except on a set of probability at most $\delta$,
there is a measure-preserving 
isometry $\mathfrak{I}$ from $B^\bullet_A(\D_{(S)})$ onto $B^\bullet_A(\HH)$ such that 
\eqref{coup-cur} holds. Then it is not hard to verify that $\mathfrak{I}$
induces an isometry from $\mathfrak{B}^\bullet_1(\D_{(S)})$ onto $\mathfrak{B}^\bullet_1(\HH)$,
which preserves the volume measures and the decorating curves. As in the proof of Theorem \ref{peeling-HP}, we 
also know that, except on a set of small probability when
$S$ is large,  $\mathfrak{I}$
induces an isometry from $\mathfrak{B}_a(\wt\D_{(S)})$ onto $\mathfrak{B}_a(\HH_1)$. 
Summarizing, if  $S$ is large enough, 
except on event of small probability, we
can couple $(\D_{(S)},D_{(S)},V_{(S)},\Lambda_{(S)})$
and $(\mathfrak{H},D,V,\Lambda)$ so that $(\mathfrak{B}^\bullet_1(\D_{(S)}),\mathfrak{B}_a(\wt\D_{(S)}))=
(\mathfrak{B}^\bullet_1(\HH),\mathfrak{B}_a(\HH_1))$, where the equality is in the sense of
isometry between curve-decorated measure metric spaces. 

Let $F$ and $G$ be two bounded measurable functions defined on the space $\M^{\mathrm{GHPU}}$ of all curve-decorated compact measure metric spaces.
It follows from the preceding considerations that
\begin{equation}
\label{Mar4}
\Big| \E\Big[F(\mathfrak{B}_a(\wt \D_{(S)}))\,G(\mathfrak{B}^\bullet_1(\D_{(S)}))\,\mathbf{1}_{\{D_{(S)}(\xx_{(S)},\Lambda_{(S)}(S/2))>1\}}\Big]
- \E\Big[F(\mathfrak{B}_a(\HH_1))\,G(\mathfrak{B}^\bullet_1(\HH))\Big]\Big|\build{\la}_{S\to\infty}^{} 0.
\end{equation}
On the other hand, Theorems 22 and 23 in \cite{LGR3} imply that
$$
\E\Big[F(\wt \D_{(S)})\,G(\mathfrak{B}^\bullet_1(\D_{(S)}))\,\mathbf{1}_{\{D_{(S)}(\xx_{(S)},\Lambda_{(S)}(S/2))>1\}}\Big]
=\E\Big[\Theta_{\wt Z_{(S)}}(F)\,G(\mathfrak{B}^\bullet_1(\D_{(S)}))\,\mathbf{1}_{\{D_{(S)}(\xx_{(S)},\Lambda_{(S)}(S/2))>1\}}\Big]
$$
where $\wt \D_{(S)}$ is also viewed as a curve-decorated measure metric space (as in the proof of Theorem \ref{peeling-HP}) and we write $\Theta_z$ for the distribution of the
(curve-decorated) free Brownian disk with perimeter $z$. We can specialize this 
equality to the case where $F$ only depends on the ball of radius $a$. It follows that
\begin{align*}
&\E\Big[F(\mathfrak{B}_a(\wt \D_{(S)}))\,G(\mathfrak{B}^\bullet_1(\D_{(S)}))\,\mathbf{1}_{\{D_{(S)}(\xx_{(S)},\Lambda_{(S)}(S/2))>1\}}\Big]\\
&\quad=\E\Big[\Theta_{\wt Z_{(S)}}(F\circ\mathfrak{B}_a)\,G(\mathfrak{B}^\bullet_1(\D_{(S)}))\,\mathbf{1}_{\{D_{(S)}(\xx_{(S)},\Lambda_{(S)}(S/2))>1\}}\Big].
\end{align*}
The coupling between the curve-decorated Brownian half-plane and the free Brownian disk of perimeter $z$ ensures that
$$\Theta_z(F\circ \mathfrak{B}_a) \build\la_{z\to\infty}^{} \Theta_\infty(F\circ \mathfrak{B}_a),$$
where $\Theta_\infty$ is the distribution of the curve-decorated Brownian half-plane. Since $\wt Z_{(S)}$ tends to $\infty$
as $S\to\infty$,
the last two displays imply that
\begin{align}
\label{Mar3}
&\Big| \E\Big[F(\mathfrak{B}_a(\wt \D_{(S)}))\,G(\mathfrak{B}^\bullet_1(\D_{(S)}))\,\mathbf{1}_{\{D_{(S)}(\xx_{(S)},\Lambda_{(S)}(S/2))>1\}}\Big]\nonumber\\
&\quad- \Theta_\infty(F\circ\mathfrak{B}_a)\,\E\Big[G(\mathfrak{B}^\bullet_1(\D_{(S)}))\,\mathbf{1}_{\{D_{(S)}(\xx_{(S)},\Lambda_{(S)}(S/2))>1\}}\Big]\Big|
\build{\la}_{S\to\infty}^{} 0.
\end{align}
Using \eqref{Mar4} twice (the second time with $F=1$), we deduce from \eqref{Mar3} that
$$\E\Big[F(\mathfrak{B}_a(\HH_1))\,G(\mathfrak{B}^\bullet_1(\HH))\Big]=\Theta_\infty(F\circ\mathfrak{B}_a)\,\E\Big[G(\mathfrak{B}^\bullet_1(\HH))\Big].$$
This gives the desired independence property. \endproof

\section{Some explicit formulas}
\label{sec:explicit}

In this section, we provide explicit formulas for the joint distribution of the variables
$\beta_r,\gamma_r, Z_r$, which determine the boundary size of the hull $B^\bullet_r(\HH)$, and 
of the volume of the hull $B^\bullet_r(\HH)$ (see Fig.~2). To simplify notation, we write $\v_r:=V(B^\bullet_r(\HH))$
for the latter volume.

 \begin{figure}[!h]
\centering
   \includegraphics[width=12cm]{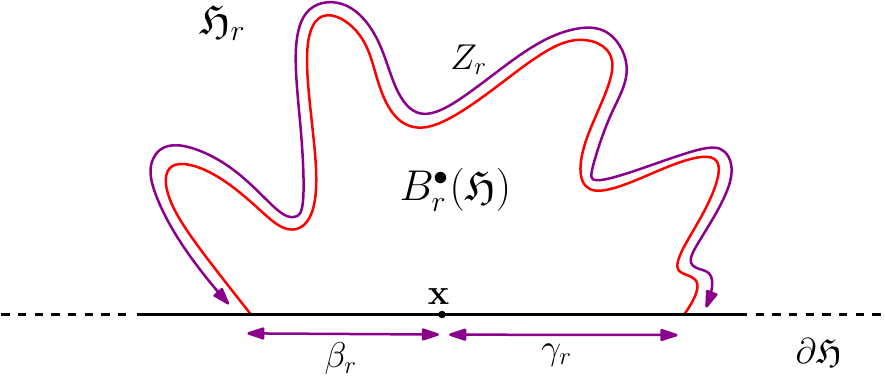}
    \caption{Illustration of the boundary lengths $\beta_r$, $\gamma_r$ and $Z_r$. }
  \end{figure}

\begin{proposition}
\label{formulas!}
The random variables $\beta_r,\gamma_r$ and $Z_r$
are independent. Moreover, $\beta_r$ and $\gamma_r$
have the same distribution, whose density is given by
$$t\mapsto \frac{3^{-3/2}r^3}{\sqrt{2\pi}}\,t^{-5/2}\,\exp(-\frac{r^2}{6t}),$$
and $Z_r$ is exponentially distributed with mean $2r^2/3$. The joint distribution
of $(\beta_r,\gamma_r,Z_r,\v_r)$ is given by the formula
\begin{equation}
\label{joint-Lap}
\mathbb{E}\Big[\exp\big(-\lambda Z_{r}-\nu_1 \beta_r-\nu_2 \gamma_r-\mu \v_{r}\big)\Big]=\frac{G_r(\mu,\nu_1)\,G_r(\mu,\nu_2)}{\frac{2}{3}\lambda\,r^2+\sqrt{2\mu}\,r^2\big(\coth((2\mu)^{\frac{1}{4}}r)^{2}-\frac{2}{3}\big)},
\end{equation}
where $G_r$ is the function:
\begin{equation}
\label{formu-G}
G_r(\mu,\nu):= r \exp\Big(-r\,\sqrt{\frac{2}{3}} \sqrt{\sqrt{2\mu}+\nu}\Big)\times\Big((2\mu)^{\frac{1}{4}}\coth((2\mu)^{\frac{1}{4}}r)+\sqrt{\frac{2}{3}}\sqrt{\sqrt{2\mu}+\nu}\,\Big).
\end{equation}
\end{proposition}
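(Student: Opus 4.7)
My plan is to work from the Caraceni-Curien construction recalled in Section \ref{car-cur}. In this setup $\HH$ is built from two independent five-dimensional Bessel processes $(\mathcal{R}_t)_{t\ge 0}$ and $(\mathcal{R}_{-t})_{t\ge 0}$ started from $0$, together with a Poisson point measure $\mathcal{N}$ of snake excursions. Consequently $\beta_r$ and $\gamma_r$ are determined by the two separate Bessel processes, hence iid, both distributed as the last passage time at level $r/\sqrt 3$ of a $\mathrm{BES}(5)$ started from $0$. The stated density is the classical last-passage density $t\mapsto \frac{a^{d-2}}{2^{d/2-1}\Gamma(d/2-1)}\,t^{-d/2}\,e^{-a^2/(2t)}$ specialised to $d=5$, $a=r/\sqrt 3$ (obtained e.g.\ by time inversion, which converts the last-passage time into a hitting time of a transient process). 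The three-way independence of $\beta_r,\gamma_r,Z_r$ and the exponential law of $Z_r$ will then follow from \eqref{joint-Lap} by setting $\mu=0$, so the main task is to prove \eqref{joint-Lap}.

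To prove \eqref{joint-Lap}, I would condition on the Bessel pair $\mathcal R$ and apply Campbell's formula to $\mathcal N$, which has intensity $2\,\mathbf{1}_{\{W_*>0\}}\,\dd t\,\N_{\sqrt 3\,\mathcal R_t}(\dd\omega)$. A snake excursion $\omega^i$ with $t_i\in[-\beta_r,\gamma_r]$ has its entire subtree $\t_{(\omega^i)}$ in $\kk_r$, so it contributes only $\mu\sigma(\omega^i)$ to the exponent; for $t_i\notin[-\beta_r,\gamma_r]$, $\omega^i$ contributes $\lambda\z_r(\omega^i)$ plus a hull-volume equal to the total duration of its sub-excursions below level $r$ (using the characterisation $m_a\le r\iff\tau_r(\omega^i_s)<\infty$ for $a=p_{(\omega^i)}(s)$, valid outside $[-\beta_r,\gamma_r]$). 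Applying the special Markov property of Section \ref{sna-mea} to these sub-excursions collapses the joint Laplace transform in $(\z_r,\text{hull-volume})$ into a Laplace transform of $\z_r$ alone with shifted parameter $\tilde\lambda:=\lambda+\N_r\big(\mathbf{1}_{\{W_*>0\}}(1-e^{-\mu\sigma})\big)$. Campbell's formula then gives
\begin{equation*}
\E\big[e^{-\lambda Z_r-\mu\v_r}\mid\mathcal R\big]=\exp\!\bigg(\!-2\!\int_{-\beta_r}^{\gamma_r}\!\!\psi(\mu,\sqrt 3\mathcal R_t)\,\dd t-2\!\int_{\R\setminus[-\beta_r,\gamma_r]}\!\!\Phi(\tilde\lambda,\sqrt 3\mathcal R_t)\,\dd t\bigg),
\end{equation*}
with $\psi(\mu,x):=\N_x\big(\mathbf{1}_{\{W_*>0\}}(1-e^{-\mu\sigma})\big)$ and $\Phi(\tilde\lambda,x):=\N_x\big(\mathbf{1}_{\{W_*>0\}}(1-e^{-\tilde\lambda\z_r})\big)$ for $x>r$.

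The next step is to evaluate $\psi$ and $\Phi$ in closed form and then integrate against the Bessel pair. Both snake functionals are classical Feynman-Kac / PDE computations in the vein of \cite{hull}, based on the $u^2/2$ branching mechanism of the Brownian snake: $\psi(\mu,x)$ is expressible through $(2\mu)^{1/4}\coth((2\mu)^{1/4}x)$, while $\Phi(\tilde\lambda,x)$ additionally encodes the exit at level $r$ and contributes the factors $\sqrt{2/3}\sqrt{\sqrt{2\mu}+\cdot}$ that appear in $G_r$ once $\tilde\lambda$ is substituted. After substitution, the expectation in $\mathcal R$ weighted by $e^{-\nu_1\beta_r-\nu_2\gamma_r}$ factorises over the two half-lines by independence of the two Bessel processes, producing the numerator $G_r(\mu,\nu_1)G_r(\mu,\nu_2)$. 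On each half-line one splits $\mathrm{BES}(5)$ at its last passage time at $r/\sqrt 3$: the pre- and post-$\gamma_r$ parts are conditionally independent given $\gamma_r$, and the post-$\gamma_r$ process is the Doob $h$-transform of $\mathrm{BES}(5)$ with $h(y)=1-(r/(\sqrt 3 y))^3$. This allows explicit Bessel-ODE computation of the two integrals; the interior integral $\int_{-\beta_r}^{\gamma_r}\psi(\mu,\sqrt 3\mathcal R_t)\,\dd t$, which couples the two sides, then yields the denominator $\tfrac 23\lambda r^2+\sqrt{2\mu}\,r^2\big(\coth^2((2\mu)^{1/4}r)-\tfrac 23\big)$.

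The main obstacle is the explicit matching of these Bessel-ODE outputs with the compact expressions \eqref{formu-G} and the denominator of \eqref{joint-Lap}: identifying the combination $(2\mu)^{1/4}\coth((2\mu)^{1/4}r)$ and the exponential factor $\exp(-r\sqrt{2/3}\sqrt{\sqrt{2\mu}+\nu})$ coming from the ``exterior'' Bessel integrals requires a careful joint use of the special Markov property and the explicit form of the $h$-transformed Bessel semigroup, together with a judicious change of variables to express the interior Bessel integral in the required form. Once \eqref{joint-Lap} is established, the independence of $\beta_r,\gamma_r,Z_r$ follows by setting $\mu=0$: then $\psi(0,\cdot)=0$, $\tilde\lambda=\lambda$, the interior contribution vanishes, and the right-hand side of \eqref{joint-Lap} factorises as the product of the three individual Laplace transforms.
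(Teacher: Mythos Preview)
Your strategy is essentially the paper's: condition on $\mathcal R$, apply Campbell and the special Markov property to collapse the hull--volume contribution from the exterior excursions into a shift $\lambda\mapsto\tilde\lambda=\lambda+g_\mu(r)$ with $g_\mu(x)=\N_x(\mathbf 1_{\{W_*>0\}}(1-e^{-\mu\sigma}))$, and then take expectations over $\mathcal R$ using the last--passage decomposition of $\mathrm{BES}(5)$. The paper organises this via the decomposition $\v_r=\v_r^0+\v_r^1+\v_r^2$ (exterior, negative interior, positive interior), together with the independence of the pairs $(Z_r,\v_r^0)$, $(\beta_r,\v_r^1)$, $(\gamma_r,\v_r^2)$, and two Bessel lemmas computing the resulting functionals by time--reversal to $\mathrm{BES}(-1)$ and absolute continuity with Brownian motion.

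However, your attribution of the pieces is inverted. The interior integral $\int_{-\beta_r}^{\gamma_r}\psi(\mu,\sqrt 3\mathcal R_t)\,\dd t$ does \emph{not} couple the two sides: it splits as $\int_{-\beta_r}^0+\int_0^{\gamma_r}$, each term depending on an independent Bessel process, and it is precisely these interior pieces (weighted by $e^{-\nu_1\beta_r}$, $e^{-\nu_2\gamma_r}$) that produce the numerator $G_r(\mu,\nu_1)\,G_r(\mu,\nu_2)$; this is the content of Lemma~\ref{tec-formu2}. The denominator arises from the \emph{exterior} integrals $\int_{\gamma_r}^\infty\Phi(\tilde\lambda,\sqrt 3\mathcal R_t)\,\dd t$ and its mirror, which together equal $\E[e^{-\tilde\lambda Z_r}]=(1+\tfrac{2\tilde\lambda r^2}{3})^{-1}$ by Lemma~\ref{tec-formu}; substituting $\tilde\lambda=\lambda+g_\mu(r)$ gives exactly the denominator in \eqref{joint-Lap}. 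Also, the pre-- and post--$\gamma_r$ parts of $\mathrm{BES}(5)$ are genuinely independent (not merely conditionally so given $\gamma_r$), which is what makes the full factorisation work. Once you swap these roles the argument goes through as you outline.
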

We have not been able to find an intuitive explanation for the exponential distribution of $Z_r$. Interestingly, in the
Brownian plane, the boundary size of the hull follows a $\Gamma(3/2)$-distribution \cite[Proposition~1.2]{hull}.

\proof Recall the definition  of the variables $Z'_r$ and $Z''_r$ in \eqref{def:eq:Z:prime}, such that $Z'_r+Z''_r=Z_r$. From the independence 
of the processes $(\mathcal{R}_t)_{t\leq 0}$ and $(\mathcal{R}_t)_{t\geq0}$ one immediately gets that the
pairs $(\beta_r,Z'_r)$ and $(\gamma_r,Z''_r)$ are independent and identically distributed. Furthermore, using 
the well-known fact that $(\mathcal{R}_{\gamma_r+t})_{t\geq 0}$ is independent of $\gamma_r$, one
obtains that $Z''_r$ is independent of $\gamma_r$, and similarly $Z'_r$ is independent of $\beta_r$. This discussion
shows that $\beta_r,\gamma_r$ and $Z_r$ are independent. 

By definition, $\gamma_r$ and $\beta_r$ are distributed as the last hitting time of $r/\sqrt{3}$ by a five-dimensional
Bessel process started from $0$, and their density is well known \cite{Get} to be as stated in the proposition. 
We note that the Laplace transform of $\gamma_r$ (or $\beta_r$) is given by
$$\E\big[e^{-\lambda \gamma_r}\big]= \Big(1+ r\sqrt{\frac{2\lambda}{3}}\Big)\,\exp\Big(-r \sqrt{\frac{2\lambda}{3}}\Big).$$

Let us turn to the distribution of $Z_r$.  Since, conditionally on $\mathcal{R}$, the point measure  $\mathcal{N}(\dd t\,\dd\omega)$  is Poisson 
with intensity $2\,\mathbf{1}_{\{W_*(\omega)>0\}}\,\dd t\,\N_{\sqrt{3}\mathcal{R}_t}(\dd \omega)$, we have
$$\E\big[e^{-\lambda Z''_r}\big]= \E\Big[\exp\Big(-2\int_{\gamma_r}^\infty \dd t\,\N_{\sqrt{3}\mathcal{R}_t}\Big((1-e^{-\lambda\z_r})\mathbf{1}_{\{W_*>0\}}\Big)\Big)\Big].$$
From \cite[Lemma 4.1]{hull}, we have, for every $x>r$,
\begin{equation}\label{eq:N:Z:W:*:>0}
\N_x\Big((1-e^{-\lambda\z_r})\mathbf{1}_{\{W_*>0\}}\Big)=\frac{3}{2}\Big(\Big(x-r+ (\frac{2\lambda}{3} +r^{-2})^{-1/2}\Big)^{-2}-x^{-2}\Big).
\end{equation}
It follows, that, for $t>\gamma_r=\sup\{s\geq 0:\mathcal{R}_s\leq r/\sqrt{3}\}$,
\begin{equation}
\label{formu-13}
\N_{\sqrt{3}\mathcal{R}_t}\Big((1-e^{-\lambda\z_r})\mathbf{1}_{\{W_*>0\}}\Big)=\frac{1}{2}\Big((\mathcal{R}_t-b)^{-2}-(\mathcal{R}_t)^{-2}\Big),
\end{equation}
where we have set
$$b=\frac{r}{\sqrt{3}}-(2\lambda +3r^{-2})^{-1/2} \in(0,\frac{r}{\sqrt{3}}).$$
\begin{lemma}
\label{tec-formu}
For every $x>0$, set $\mathcal{L}_x:=\sup\{t\geq 0:\mathcal{R}_t\leq x\}$. Then, for every $c<x<y$, 
$$\E\Big[\exp\Big(-\int_{\mathcal{L}_x}^{\mathcal{L}_y} \dd t\, \Big((\mathcal{R}_t-c)^{-2}-(\mathcal{R}_t)^{-2}\Big)\Big)\Big] =\frac{y}{x}\times \frac{x-c}{y-c}.$$
\end{lemma}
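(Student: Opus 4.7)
My plan is to combine a time reversal at $\mathcal{L}_y$ with a Feynman--Kac computation, which turns the integral between two last-passage times into a first-passage functional that is handled by exhibiting an explicit $u$--function.

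By the classical time-reversal theorem for transient Bessel processes, the process $\tilde{\mathcal{R}}_s:=\mathcal{R}_{(\mathcal{L}_y-s)^{+}}$, $s\geq 0$, is distributed as a diffusion on $(0,\infty)$ started at $y$, with generator $\tilde L f(r)=\tfrac12 f''(r)-r^{-1}f'(r)$ (the ``Bessel process of dimension $4-5=-1$''), run up to its first hitting time of $0$. Under this identification, $\tilde T_x:=\mathcal{L}_y-\mathcal{L}_x$ is exactly the first passage time of $\tilde{\mathcal{R}}$ at $x$, and a change of variable yields
$$\int_{\mathcal{L}_x}^{\mathcal{L}_y}\bigl((\mathcal{R}_t-c)^{-2}-(\mathcal{R}_t)^{-2}\bigr)\,\dd t=\int_0^{\tilde T_x}V(\tilde{\mathcal{R}}_s)\,\dd s,\qquad V(r):=(r-c)^{-2}-r^{-2}.$$

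A direct computation then verifies that $u(r):=r/(r-c)$ satisfies $\tilde L u=Vu$ on $(c,\infty)$, so by It\^o's formula
$$M_s:=u(\tilde{\mathcal{R}}_s)\,\exp\Bigl(-\int_0^s V(\tilde{\mathcal{R}}_r)\,\dd r\Bigr)$$
is a local martingale. Since $\tilde{\mathcal{R}}_s\geq x>c$ on $[0,\tilde T_x]$, $u$ is bounded there (between $1$ and $x/(x-c)$) and $V\geq 0$, so the stopped process $(M_{s\wedge\tilde T_x})_{s\geq 0}$ is bounded, hence a genuine martingale. Since $\tilde T_x\leq\mathcal{L}_y<\infty$ almost surely, optional stopping at $\tilde T_x$ gives
$$u(y)=\E[M_0]=\E[M_{\tilde T_x}]=u(x)\,\E\Bigl[\exp\Bigl(-\int_0^{\tilde T_x}V(\tilde{\mathcal{R}}_s)\,\dd s\Bigr)\Bigr],$$
and rearranging produces the claimed identity, using $u(y)/u(x)=y(x-c)/(x(y-c))$.

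The main obstacle is justifying the precise form of the time-reversal statement---namely, that the reverse of a $5$-dimensional Bessel started from $0$, at its last passage time at $y$, is the diffusion with generator $\tfrac12\partial^2-r^{-1}\partial$ started at $y$. This is classical (the $d\mapsto 4-d$ duality for transient Bessel processes at last passage times), but one must be slightly careful with the ``negative-dimensional'' Bessel. An alternative that avoids this is to use Williams' decomposition of $\mathcal{R}$ at $\mathcal{L}_x$---the post-$\mathcal{L}_x$ process being a $5$-dim Bessel from $x$ conditioned never to return to $x$, i.e.\ $h$-transformed with $h(r)=1-(x/r)^3$---and carry out the Feynman--Kac directly on this $h$-transformed process, at the cost of somewhat heavier algebra.
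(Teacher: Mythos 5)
Your proof is correct. Both you and the paper begin with the same key step: reversing time from the last passage time $\mathcal{L}_y$ via Nagasawa's theorem, so that $\tilde{\mathcal{R}}_s:=\mathcal{R}_{(\mathcal{L}_y-s)^+}$ becomes a Bessel process of dimension $-1$ started at $y$, and the last-passage functional becomes a first-passage Feynman--Kac quantity. Where you diverge is in evaluating that quantity. The paper first applies an absolute-continuity relation between the $(-1)$-dimensional Bessel process killed at $T_x$ and a Brownian motion killed at $T^{(B)}_x$ (citing Lemma 1 of \cite{Bessel}); this change of measure both produces the prefactor $y/x$ and cancels the $-(\tilde{\mathcal{R}}_t)^{-2}$ term, reducing the problem to the standard Brownian martingale $B_t^{-1}\exp(-\int_0^t B_s^{-2}\,\dd s)$ started from $y-c$. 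You instead work directly with the generator $\tilde L=\tfrac12\partial^2-r^{-1}\partial$ of the Bessel$(-1)$ process and exhibit the explicit eigenfunction $u(r)=r/(r-c)$ solving $\tilde L u=Vu$ on $(c,\infty)$; the resulting local martingale is bounded on $[0,\tilde T_x]$ since $\tilde{\mathcal{R}}\ge x>c$ and $V\ge 0$ there, so optional stopping applies cleanly and gives $u(y)/u(x)$, which is exactly the claimed product. Both routes are valid and of comparable length; yours is slightly more self-contained since it avoids invoking the absolute-continuity lemma, at the cost of having to guess the eigenfunction $u$ (which, unsurprisingly, is precisely the product of the Radon--Nikodym density $r$ and the Brownian $h$-function $1/(r-c)$ appearing in the paper's two-step argument, up to normalization). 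The caveat you raise about the ``negative-dimensional'' Bessel and the precise form of Nagasawa's theorem is real but is exactly what the paper handles by citation; your fallback via Williams' decomposition would also work.
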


The proof is analogous to the proof of Lemma 4.2 in \cite{hull} and is deferred to the Appendix. We apply Lemma \ref{tec-formu}
with $x=r/\sqrt{3}$, $c=b$ and taking limits as $y\to\infty$. It follows that
$$\E[e^{-\lambda Z''_r}]=\frac{(r/\sqrt{3})-b}{r/\sqrt{3}}= \big(1+\frac{2\lambda r^2}{3}\big)^{-1/2}.$$
Since $Z_r=Z'_r+Z''_r$, and $Z'_r$ and $Z''_r$ are independent and identically distributed, we finally get
$$\E[e^{-\lambda Z_r}]=\big(1+\frac{2\lambda r^2}{3}\big)^{-1},$$
which gives the desired distribution of $Z_r$. 

Let us turn to the proof of \eqref{joint-Lap}. If $\w$ is a stopped path, we write $\mathrm{min}(\w):=\min\{\w(t):0\leq t\leq \zeta_{(\w)}\}$.
Then, from formula \eqref{form-hull} and the definition of the volume measure, we get that
$$\v_r=\v^0_r + \v^1_r + \v^2_r,$$
where
\begin{align*}
\v^0_r&:=\sum_{i\in I:t_i\in (-\infty,-\beta_r)\cup (\gamma_r,\infty)} \int_0^{\sigma(\omega^i)} \dd s\,\mathbf{1}_{\{\mathrm{min}(\omega^i_s)\leq r\}},\\
\v^1_r&:=\sum_{i \in I: t_i\in [-\beta_r,0]}\sigma(\omega^i),\\
\v^2_r&:=\sum_{i \in I: t_i\in[0,\gamma_r]}\sigma(\omega^i).
\end{align*}
Using the independence of the processes $(\mathcal{R}_t)_{t\geq 0}$ and $(\mathcal{R}_t)_{t\leq 0}$, the fact that
$(\mathcal{R}_{\gamma_r+t})_{t\geq 0}$ is independent of $(\mathcal{R}_t)_{0\leq t\leq \gamma_r}$ (and the analogous property
for $(\mathcal{R}_t)_{t\leq 0}$) and properties of Poisson measures, one immediately verifies that the three pairs 
$(Z_r,\v^0_r)$, $(\beta_r,\v^1_r)$ and $(\gamma_r,\v^2_r)$ are independent, and moreover the pairs
$(\beta_r,\v^1_r)$ and $(\gamma_r,\v^2_r)$ have the same distribution. 

Let us start by discussing the pair $(\gamma_r,\v^2_r)$. For every $\mu>0$, we have
$$\E\Big[\exp(-\mu \v^2_r)\,\Big|\, (\mathcal{R}_t)_{t\geq 0}\Big]=\exp\Big(-2\int_0^{\gamma_r} \dd t\,\N_{\sqrt{3}\mathcal{R}_t}\Big((1-e^{-\mu\sigma})\,\mathbf{1}_{\{W_*>0\}}\Big)\Big). $$
For every $x>0$, set 
$$g_\mu(x)=\N_x\Big((1-e^{-\mu\sigma})\,\mathbf{1}_{\{W_*>0\}}\Big)=\N_x\Big(1-\mathbf{1}_{\{W_*>0\}}\,e^{-\mu\sigma}\Big) -\frac{3}{2x^2}.$$
By \cite[Lemma 7]{Del}, we have
$$g_\mu(x)=\sqrt{\frac{\mu}{2}}\Big(3\,\coth\big((2\mu)^{1/4}x\big)^2 - 2\Big)  -\frac{3}{2x^2}.$$
\begin{lemma}
\label{tec-formu2}
For every $\mu,\nu >0$, we have
$$\E\Big[\exp\Big(-\nu \gamma_r - 2 \int_0^{\gamma_r} \dd t\,g_\mu(\sqrt{3}\mathcal{R}_t)\Big)\Big] = G_r(\mu,\nu),$$
where the function $G_r(\mu,\nu)$ is defined in \eqref{formu-G}.
\end{lemma}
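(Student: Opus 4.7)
The plan is to reduce the computation to a Feynman--Kac problem by time reversal, and then to solve the resulting second-order ODE explicitly.

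\emph{Time reversal and the ODE.} I would first invoke the classical Williams time-reversal identity for transient Bessel processes: since $(\mathcal{R}_t)_{t\geq 0}$ is a five-dimensional Bessel process started from $0$ and $\gamma_r$ is its last hitting time of $r/\sqrt{3}$, the reversed trajectory $(\mathcal{R}_{\gamma_r-t})_{0\leq t\leq \gamma_r}$ has the law of a Bessel process $Y$ of dimension $-1$ started from $r/\sqrt{3}$ and absorbed at its first passage time $T_0$ to $0$. Writing
$$\psi(x):=\E^Y_x\Big[\exp\Big(-\nu T_0-2\int_0^{T_0} g_\mu(\sqrt{3}\, Y_s)\,\dd s\Big)\Big]$$
for $x>0$, the quantity we want is $\psi(r/\sqrt{3})$. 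Since $Y$ has generator $\tfrac12\partial^2_x-x^{-1}\partial_x$ and the potential $V(x):=\nu+2g_\mu(\sqrt{3}\,x)$ is continuous and bounded on $(0,\infty)$ (the apparent $1/x^2$ singularities of $g_\mu$ at $0$ cancel out, as is visible from the expansion $\coth^2(cy)=1/(cy)^2+2/3+O(y^2)$), Feynman--Kac identifies $\psi$ as the unique bounded solution of
$$\tfrac12\psi''(x)-x^{-1}\psi'(x)=V(x)\,\psi(x),\qquad \psi(0^+)=1.$$

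\emph{Explicit resolution.} Two successive changes of unknown simplify this ODE. First, the substitution $\psi(x)=x\,\varphi(x)$ eliminates the drift term and, through the contribution $-(2/x^2)\varphi$, cancels exactly the remaining $-1/x^2$ piece hidden inside $4g_\mu(\sqrt{3}\,x)$, leaving
$$\varphi''(x)=\big(2\nu+6c^2\coth^2(\sqrt{3}\,cx)-4c^2\big)\,\varphi(x),\qquad c:=(2\mu)^{1/4}.$$
Then the change of variable $u=\sqrt{3}\,cx,\ \Phi(u)=\varphi(u/(\sqrt{3}\,c))$, combined with the identity $\coth^2u=1+\sinh^{-2}u$, reduces the equation to the P\"oschl--Teller form
$$\Phi''(u)=\Big(k^2+\frac{2}{\sinh^2 u}\Big)\,\Phi(u),\qquad k:=\sqrt{\frac{2(\sqrt{2\mu}+\nu)}{3\sqrt{2\mu}}},$$
and one verifies by direct differentiation that two linearly independent solutions are given by
$$\Phi_\pm(u)=\big(\coth u\mp k\big)\,e^{\pm k u}.$$
Since $\psi\leq 1$ by its probabilistic definition, the exponentially growing branch $\Phi_+$ is discarded, so $\Phi(u)=C(\coth u+k)e^{-ku}$ for some constant $C$. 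The expansion $\coth u\sim 1/u$ near $0$ gives $\psi(x)\sim C/(\sqrt{3}\,c)$ as $x\da 0$, so the normalization $\psi(0^+)=1$ forces $C=\sqrt{3}\,c$. Substituting back and evaluating at $x=r/\sqrt{3}$, using $ck=\sqrt{2(\sqrt{2\mu}+\nu)/3}$, produces exactly the announced $G_r(\mu,\nu)$.

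\emph{Main obstacle.} I expect the only genuinely delicate points to be the careful justification of the two soft inputs to the scheme: the Williams time-reversal identity for Bessel($5$) up to its last passage at a fixed level, at the level of path distributions including the exact identification of the generator of the reversed process; and the uniqueness of the bounded Feynman--Kac solution of the ODE, which hinges on $V$ being bounded near $0$ thanks to the cancellation of singularities in $g_\mu$, and on the absorbing behavior of Bessel($-1$) at the origin. Once these are settled, the rest of the argument is a direct computation which closely parallels the proof of \cite[Lemma 4.2]{hull} invoked for the previous technical lemma.
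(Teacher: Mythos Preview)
Your proposal is correct and follows essentially the same route as the paper. Both arguments time-reverse the five-dimensional Bessel process at its last passage time of $r/\sqrt{3}$ to obtain a Bessel process of dimension $-1$, and both eventually reduce to the second-order ODE $\varphi''=(2\nu+6c^2\coth^2(\sqrt{3}cx)-4c^2)\varphi$ (the paper's $F$ is a constant multiple of your $\varphi$), whose decaying solution is written down explicitly and evaluated at $r/\sqrt{3}$.

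The one difference worth noting is how the reduction to this ODE is carried out. You invoke Feynman--Kac directly for the Bessel$(-1)$ generator and then make the substitution $\psi=x\varphi$, which forces you to justify the boundary condition $\psi(0^+)=1$ and the uniqueness of the bounded solution. The paper instead passes, for each $\ve>0$, from Bessel$(-1)$ stopped at level $\ve$ to Brownian motion via the absolute continuity relation of \cite{Bessel}; the resulting Radon--Nikodym factor $y/\ve$ together with an extra $B_t^{-2}$ in the potential is precisely what your substitution $\psi=x\varphi$ accomplishes. The paper then applies optional stopping to the martingale $F(B_{t\wedge T^{(B)}_\ve})\exp(-\int_0^{t\wedge T^{(B)}_\ve}f(B_s)\dd s)$ and lets $\ve\to 0$ at the very end. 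This $\ve$-cutoff neatly sidesteps the two soft points you flag (behavior of Bessel$(-1)$ at the origin and uniqueness in Feynman--Kac), at the cost of introducing a limiting argument. Both packagings are valid; the paper's is slightly more self-contained, while yours makes the ODE structure (P\"oschl--Teller) more transparent.
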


We postpone the proof of Lemma \ref{tec-formu2} until the Appendix and complete the proof of Proposition \ref{formulas!}. It readily follows 
from Lemma \ref{tec-formu2} that
\begin{equation}
\label{formula1}
\E[\exp(-\nu \gamma_r - \mu \v^2_r)]= \E\Big[\exp\Big(-\nu \gamma_r - 2 \int_0^{\gamma_r} \dd t\,g_\mu(\sqrt{3}\mathcal{R}_t)\Big)\Big] = G_r(\mu,\nu).
\end{equation}
The same formula obviously holds if the pair $(\gamma_r,\v^2_r)$ is replaced by $(\beta_r,\v^1_r)$.
It remains to compute $\E[\exp(-\lambda Z_r - \mu \v^0_r)]$. An application of the special Markov property gives
$$\E[e^{-\mu \v^0_r}\mid Z_r] = \exp\Big(-Z_r\,\N_r\Big((1-e^{-\mu\sigma})\,\mathbf{1}_{\{W_*>0\}}\Big)\Big)=\exp(-g_\mu(r)\,Z_r).$$
Hence, recalling the exponential distribution of $Z_r$
\begin{align}
\label{formula2}
\E[\exp(-\lambda Z_r - \mu \v^0_r)]&=\E[\exp(-(\lambda+g_\mu(r)) Z_r)]\nonumber\\
&=\frac{3}{2r^2}\int_0^\infty \dd t\,\exp\Big(-(\lambda+g_\mu(r))t- \frac{3}{2r^2}t\Big)\nonumber\\
&=\frac{3}{2r^2}\times \frac{1}{\lambda + \sqrt{\frac{\mu}{2}}\Big(3\,\coth\big((2\mu)^{1/4}r\big)^2 - 2\Big)}\,.
\end{align}
Finally, using the independence of the three pairs 
$(Z_r,\v^0_r)$, $(\beta_r,\v^1_r)$ and $(\gamma_r,\v^2_r)$ and formulas \eqref{formula1} and \eqref{formula2}, we get
\begin{align*}
&\mathbb{E}\big[\exp\big(-\lambda Z_{r}-\nu_1 \beta_r-\nu_2 \gamma_r-\mu \v_{r}\big)\big]\\
&\quad=
\E[\exp(-\lambda Z_r - \mu \v^0_r)] \times \E[\exp(-\nu_1 \beta_r - \mu \v^1_r)]\times \E[\exp(-\nu_2 \gamma_r - \mu \v^2_r)]\\
&\quad=\frac{3}{2r^2}\times \frac{1}{\lambda + \sqrt{\frac{\mu}{2}}\Big(3\,\coth\big((2\mu)^{1/4}r\big)^2 - 2\Big)}\times G_r(\mu,\nu_1)\times G_r(\mu,\nu_2).
\end{align*}
This completes the proof of Proposition \ref{formulas!}.
\endproof

\section{The perimeter process}
\label{sec:peripro}

In this section, we study the process $(Z_t)_{t \geq 0}$, where, by convention, we take $Z_0 := 0$.  Our first goal is to compute the finite-dimensional marginals of $(Z_t)_{t \geq 0}$. To this end, for every $t\geq 0$, we consider  the $\sigma$-field $\mathcal{G}_t$ generated by the processes $(\mathcal{R}_{\gamma_t + r})_{r \geq 0}$ and $(\mathcal{R}_{-\beta_t - r})_{r \geq 0}$, the point measure:
$$\sum_{i\in I, t_i\notin [-\beta_t,\gamma_t]} \delta_{(t_i,\omega^i)},$$
and the $\mathbb{P}$-negligible sets.  Then $(\mathcal{G}_{t})_{t\geq 0}$ is a backward filtration
(meaning that $\mathcal{G}_t\subset \mathcal{G}_s$ if $s\leq t$).  We also observe that $Z_t$ is $\mathcal{G}_{t}$-measurable, since $Z_t=\sum_{i\in I, t_i\notin [-\beta_t,\gamma_t]} \mathcal{Z}_t(\omega^i)$. 
\begin{lemma}\label{eq:two:points}
Let $0\leq s\leq t$. Then, for every $\lambda>0$, we have:
%\begin{align*}
$$\mathbb{E}\Big[\exp\big(-\lambda Z_s\big) \Big| ~\mathcal{G}_t\Big]
=\Bigg(\frac{t}{s+(t-s) \big(1+ \frac{2\lambda s^2}{3}\big)^{1/2}}\Bigg)^{2} \exp\Bigg(- \frac{3Z_t}{2}\Bigg( \frac{1}{\Big(t-s+(\frac{2\lambda}{3}+ s^{-2})^{-1/2}\Big)^2}-\frac{1}{t^2}\Bigg) \Bigg).
$$
\end{lemma}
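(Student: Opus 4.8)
The plan is to split $Z_s$ according to the location of the root $t_i$ of the snake $\omega^i$ responsible for the term $\z_s(\omega^i)$, and to handle the two resulting pieces by the special Markov property and by a last--passage computation for the five--dimensional Bessel process (one may clearly assume $0<s<t$, the cases $s\in\{0,t\}$ being immediate). Since $s\le t$ gives $[-\beta_s,\gamma_s]\subset[-\beta_t,\gamma_t]$, I would write $Z_s=Z_s^{(1)}+Z_s^{(2)}$ with
$$Z_s^{(1)}:=\!\!\sum_{i\in I:\,t_i\notin[-\beta_t,\gamma_t]}\!\!\z_s(\omega^i),\qquad Z_s^{(2)}:=\!\!\sum_{i\in I:\,t_i\in[-\beta_t,\gamma_t]\setminus[-\beta_s,\gamma_s]}\!\!\z_s(\omega^i),$$
the first piece collecting the contributions of the snakes whose root $t_i$ lies outside $[-\beta_t,\gamma_t]$, the second those of the snakes rooted in the ``annular region''. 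For $Z_s^{(1)}$, every index $i$ occurring satisfies $\sqrt{3}\,\mathcal{R}_{t_i}>t$, so $\omega^i$ reaches level $t$, and the key step is to apply to each such $\omega^i$ the special Markov property at level $t$ (in the extension recalled at the end of Section~\ref{sna-mea}, valid since $\sqrt{3}\,\mathcal{R}_{t_i}>t>0$ and $W_*(\omega^i)>0$): conditionally on $\tr_t(\omega^i)$, the excursions of $\omega^i$ away from $t$ form a Poisson measure with intensity $\mathbf{1}_{[0,\z_t(\omega^i)]}(u)\,\dd u\,\N_t(\dd\omega'\cap\{W_*>0\})$, and a time--change argument (of the type used to show $\z_t$ depends only on $\tr_t$) gives $\z_s(\omega^i)=\sum_j\z_s(\omega^{i,j})$ over these excursions. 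Hence, conditionally on $\mathcal{G}_t$ — which for the exterior snakes retains exactly the truncations $\tr_t(\omega^i)$, in particular their total exit mass $\sum_i\z_t(\omega^i)=Z_t$ — the variable $Z_s^{(1)}$ has the law of $\sum_k\z_s(\varpi_k)$ for a Poisson measure with intensity $\mathbf{1}_{[0,Z_t]}(u)\,\dd u\,\N_t(\dd\varpi\cap\{W_*>0\})$. The exponential formula together with \eqref{eq:N:Z:W:*:>0} at $x=t$, $r=s$ then yields
$$\E\big[e^{-\lambda Z_s^{(1)}}\,\big|\,\mathcal{G}_t\big]=\exp\!\Big(-Z_t\,\N_t\big((1-e^{-\lambda\z_s})\mathbf{1}_{\{W_*>0\}}\big)\Big)=\exp\!\Big(-\tfrac{3Z_t}{2}\Big(\tfrac1{(t-s+(\frac{2\lambda}{3}+s^{-2})^{-1/2})^2}-\tfrac1{t^2}\Big)\Big),$$
which is the exponential factor in the statement.

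For $Z_s^{(2)}$, I would write $Z_s^{(2)}=Z_s^{(2),+}+Z_s^{(2),-}$ with $Z_s^{(2),+}:=\sum_{t_i\in(\gamma_s,\gamma_t]}\z_s(\omega^i)$ and a symmetric term on the negative side, equidistributed by the independence of $(\mathcal{R}_u)_{u\ge0}$ and $(\mathcal{R}_u)_{u\le0}$. Since $\sqrt{3}\,\mathcal{R}_u>s$ for $u\in(\gamma_s,\gamma_t]$, conditioning on $\mathcal{R}$ (so that $\mathcal{N}$ is Poisson) and using \eqref{eq:N:Z:W:*:>0} gives
$$\E\big[e^{-\lambda Z_s^{(2),+}}\,\big|\,\mathcal{R}\big]=\exp\!\Big(-3\!\int_{\gamma_s}^{\gamma_t}\!\Big((\sqrt{3}\,\mathcal{R}_u-c)^{-2}-(\sqrt{3}\,\mathcal{R}_u)^{-2}\Big)\dd u\Big),\qquad c:=s-s\,\beta^{-1},\ \ \beta:=\Big(1+\tfrac{2\lambda s^2}{3}\Big)^{1/2}.$$
Taking the further conditional expectation given $\mathcal{G}_t$ and using the last--passage decomposition of the transient process $(\mathcal{R}_u)_{u\ge0}$ at level $t/\sqrt{3}$ (together with the fact that, given $(\mathcal{R}_u)_{u\ge\gamma_t}$, the exterior snakes carry no further information on $(\mathcal{R}_u)_{u\le\gamma_t}$), one checks that this conditional expectation equals the unconditional one; the latter is precisely of the form covered by Lemma~\ref{tec-formu}. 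Replacing $(\sqrt{3}\,\mathcal{R}_u)_u$ by a five--dimensional Bessel process $\widehat{\mathcal{R}}$ started at $0$ (so $\gamma_r$ becomes the last--passage time $\mathcal{L}_r$ of $\widehat{\mathcal{R}}$ at level $r$) and noting $0<c<s<t$, Lemma~\ref{tec-formu} with $x=s$, $y=t$ gives
$$\E\big[e^{-\lambda Z_s^{(2),+}}\,\big|\,\mathcal{G}_t\big]=\frac{t}{s}\cdot\frac{s-c}{t-c}=\frac{t}{s}\cdot\frac{s\beta^{-1}}{t-s+s\beta^{-1}}=\frac{t}{s+(t-s)\beta},$$
and the same for $Z_s^{(2),-}$, so $\E[e^{-\lambda Z_s^{(2)}}\,|\,\mathcal{G}_t]=\big(t/(s+(t-s)\beta)\big)^2$, which is the prefactor in the statement.

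Finally, conditionally on $\mathcal{G}_t$ the variables $Z_s^{(1)}$ and $Z_s^{(2)}$ are independent: $Z_s^{(1)}$ is a functional of the excursions away from level $t$ of the exterior snakes (fresh Poisson randomness governed by the masses $\z_t(\omega^i)$), whereas $Z_s^{(2)}$ is a functional of $(\mathcal{R}_u)_{-\beta_t\le u\le\gamma_t}$ and of the annular snakes; given $\mathcal{R}$ these are built from disjoint parts of $\mathcal{N}$, and the conditional independence follows. Multiplying the two conditional Laplace transforms gives the claimed identity. I expect the genuinely delicate point to be the measure--theoretic bookkeeping in the two previous paragraphs: pinning down what $\mathcal{G}_t$ retains about the exterior snakes (only their level--$t$ truncations, so that their deeper excursions are governed by $Z_t$ alone) and checking that $\E[e^{-\lambda Z_s^{(2),\pm}}\mid\mathcal{G}_t]$ is insensitive to the information in $\mathcal{G}_t$ beyond $Z_t$ — both resting on the Poisson structure of $\mathcal{N}$ and on last--passage decompositions of the five--dimensional Bessel process.
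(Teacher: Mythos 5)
Your proof is correct and follows essentially the same route as the paper's: the same exterior/annular decomposition of $Z_s$, the special Markov property combined with \eqref{eq:N:Z:W:*:>0} for the exterior contribution, and the Poisson exponential formula plus Lemma \ref{tec-formu} (applied after rescaling $\sqrt{3}\,\mathcal{R}$ to a Bessel process, rather than the paper's substitution $b=c/\sqrt{3}$) for the two annular contributions, put together by the same independence/symmetry argument. The one point worth noting is your reading of $\mathcal{G}_t$ as retaining only the level-$t$ truncations of the exterior snakes; this is precisely the reading that the paper's own proof relies on when it invokes the special Markov property to compute $\E[\exp(-\lambda Z^0_s)\mid\mathcal{G}_t]$, so your argument matches the intended statement.
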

In particular, for every fixed $t>0$, $(Z_{t-s})_{s\in[0,t)}$ is a time-inhomogeneous Markov process with respect to the filtration $(\mathcal{G}_{t-s})_{s\in[0,t)}$. Together
with the distribution of $Z_t$ obtained in Proposition \ref{formulas!}, this characterizes the finite-dimensional marginals of of $(Z_t)_{t \geq 0}$. We mention that 
there is a striking analogy between Lemma \ref{eq:two:points} and \cite[Proposition 4.3]{hull}, which was 
the similar result for
 the Brownian plane. 
\begin{proof}
Let $0< s\leq t$. 
We have $Z_s = Z^0_s + Z^1_s + Z^2_s,$ where
\begin{align*}
Z^0_s&:=\sum_{i\in I:t_i\in (-\infty,-\beta_t)\cup (\gamma_t,\infty)} \mathcal{Z}_{s}(\omega^i),\\
Z^1_s&:=\sum_{i \in I: t_i\in [-\beta_t,-\beta_s)}\mathcal{Z}_s(\omega^i),\\
Z^2_s&:=\sum_{i \in I: t_i\in(\gamma_s,\gamma_t]}\mathcal{Z}_s(\omega^i).
\end{align*}
The argument is now similar to the proof of Proposition \ref{formulas!}.  By construction, $Z_{s}^{1}$ and $Z_s^{2}$ are independent of $\mathcal{G}_t$ (hence of $Z^0_s$) and have the same distribution. Furthermore, from \eqref{formu-13}, we have
\begin{align*}
\mathbb{E}\Big[\exp\big(-\lambda Z_s^{2}\big)\Big]&= \mathbb{E}\Big[\exp\Big(-2\int_{\gamma_s}^{\gamma_t} \mathrm{d} r~\N_{\sqrt{3}\mathcal{R}_r}\Big((1-e^{-\lambda\z_s})\mathbf{1}_{\{W_*>0\}}\Big)\Big)\Big]\\
&= \mathbb{E}\Big[\exp\Big(-\int_{\gamma_s}^{\gamma_t}\mathrm{d} r\,\Big((\mathcal{R}_r-b)^{-2}-(\mathcal{R}_r)^{-2}\Big)\Big)\Big]
\end{align*}
where $b=(s/\sqrt{3})-(2\lambda +3s^{-2})^{-1/2}.$ Hence an application of Lemma \ref{tec-formu} gives:
$$\mathbb{E}\Big[\exp\big(-\lambda Z_s^{2}\big)\Big]=\frac{t}{s+(t-s) \big(1+ \frac{2\lambda s^2}{3}\big)^{\frac{1}{2}}}.$$
Finally, by the special Markov property,
\begin{align*}
\mathbb{E}\Big[\exp\big(-\lambda Z_s^{0}\big) \Big| ~\mathcal{G}_t\Big]&=\exp\Big(-Z_t\,\N_{t}\Big((1-e^{-\lambda\z_s})\mathbf{1}_{\{W_*>0\}}\Big)\Big)\\
&= \exp\Bigg(- \frac{3Z_t}{2}\Bigg( \frac{1}{(t-s+(\frac{2\lambda}{3}+ s^{-2})^{-\frac{1}{2}})^2}-\frac{1}{t^2}\Bigg) \Bigg),
\end{align*}
using  \eqref{eq:N:Z:W:*:>0} in the second equality. This  completes the proof of the lemma.
\end{proof}

Our aim now is to derive  two distinct characterizations of the process $(Z_t)_{t \geq 0}$ from Lemma \ref{eq:two:points}. More precisely, we will prove  that:
\begin{itemize}
\item viewed backward in time, the process $(Z_t)_{t \geq 0}$ is an $h$-transform of a continuous-state branching process with immigration;
\item viewed in the usual forward time direction, $(Z_t)_{t \geq 0}$ is a self-similar Markov process starting from $0$, which can be characterized by the associated L\'evy process.
\end{itemize}
These characterizations are analogous to those provided for the Brownian plane in \cite[Proposition 4.4]{hull} and \cite[Section 11.2]{growth}. We start with the first characterization and we will identify the transition kernel whose Laplace transform is given by Lemma \ref{eq:two:points}. We let $\D(\R_+,\R)$ stand for the Skorokhod space  of c\`adl\`ag functions 
from $\R_+$ into $\R$. We write $(Y_t)_{t\geq 0}$ for the canonical process 
on $\D(\R_+,\R)$, and $(\f_t)_{t\geq 0}$ for the canonical filtration. We then define, for every $x \geq 0$, the probability measure $P_x$ as the law of the continuous-state branching process with immigration, with branching mechanism $\Psi(\lambda) = \sqrt{8/3} \, \lambda^{3/2}$ and  immigration mechanism $H(\lambda) = \sqrt{8/3}\, \lambda^{1/2}$.
We refer to \cite{KW} for the general theory of continuous-state branching processes with immigration (see also the survey \cite{Li}). We have,
for every $s\geq 0$,
\begin{equation}\label{eq:Y:im}
E_x\big[\exp(-\lambda Y_s)\big] = \exp\Big( -x\,u_\lambda(s)  - \int_{0}^{s} H(u_\lambda(r)) \,\dd r\Big) = \Big(1+s\sqrt{2\lambda/3}\, \Big)^{-2} \exp\big( -x\,u_{\lambda}(s)  \big),
\end{equation}
where $u_\lambda(s):=(\lambda^{-1/2}+s\sqrt{2/3})^{-2}$ solves
$\frac{\mathrm{d} u_\lambda(s)}{\mathrm{d}s}=-\Psi(u_\lambda(s))$, with $u_0(\lambda)=\lambda$.

For every $a>0$, we set 
$$h_a(x):= a^{-2} \exp\big(-\frac{3}{2 a^2} x\big).$$
Then \eqref{eq:Y:im} shows that, for every $s\geq 0$, 
$$E_x\Big[h_{a}( Y_s)\Big]= h_{a+s}(x).$$
Let us fix $t>0$. It follows from the last display that the process $(h_{t-r}(Y_r))_{r\in[0,t)}$ is a martingale under $P_x$. Hence, for every $x>0$, we may define a 
probability measure $P_x^{(t)}$ on $\D([0,t),\R)$, the Skorokhod space  of c\`adl\`ag functions 
from $[0,t)$ into $\R$, by requiring that, for every $r\in [0,t)$,
\begin{equation}
\label{htrans}
{\frac{\dd P^{(t)}_x}{\dd P_x}}_{\Big|\f_r} = \frac{h_{t-r}(Y_{r})}{h_t(x)},
\end{equation}
where we slighly abuse notation by viewing $P_x$ as a probability measure on $\D([0,t),\R)$ and keeping the same notation
$Y_r$ and $\mathcal{F}_r$ for the canonical process and the canonical filtration on $\D([0,t),\R)$. The process
$$\Big(\frac{1}{h_{t-r}(Y_{r})}\Big)_{r\in [0,t)}$$
is a (nonnegative) martingale under $P^{(t)}_x$ and thus must converge $P^{(t)}_x$ a.s. as $r\uparrow t$
to a finite limit. Clearly, this is only possible if $Y_r$ converges to $0$
as $r\uparrow t$, $P^{(t)}_x$ a.s. It follows from these considerations that we can also view
$P^{(t)}_x$ as a probability measure on $\D(\R_+,\R)$, which is supported on $\{Y_s=0\hbox{ for every }s\geq t\}$.

\begin{proposition}\label{prop:Z:immi}
Let $t>0$ and $x>0$. Conditionally on $Z_t=x$, the process $(Z_{t-r})_{r\leq  t}$
has the same finite-dimensional marginals as the process $(Y_{r})_{r\leq t}$ under $P_x^{(t)}$.
\end{proposition}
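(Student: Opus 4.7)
The plan is to reduce the proposition to a match of one-step transition kernels. On the one side, the paragraph after Lemma~\ref{eq:two:points} already observes that $(Z_{t-r})_{r \in [0,t)}$ is time-inhomogeneous Markov with respect to $(\mathcal{G}_{t-r})$; moreover the right-hand side of Lemma~\ref{eq:two:points} depends on $\mathcal{G}_t$ only through $Z_t$, so conditioning on $\{Z_t = x\}$ simply fixes the initial value of this reverse-time Markov chain at $x$. On the other side, $P_x^{(t)}$ is a Doob $h$-transform of the CBI via the space-time harmonic function $(r,y) \mapsto h_{t-r}(y)$, hence automatically Markov under the canonical filtration with $Y_0 = x$ under $P_x^{(t)}$. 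It therefore suffices to check that the two transition kernels on $[0,t)$ coincide.

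The second step is an explicit calculation of both kernels. For $(Y_r)$ under $P_x^{(t)}$ and $0 \leq r_1 < r_2 < t$, the definition \eqref{htrans} together with $h_{t-r_2}(y) = (t-r_2)^{-2}\exp(-\tfrac{3}{2(t-r_2)^2}y)$ gives
$$E_x^{(t)}\big[e^{-\lambda Y_{r_2}} \,\big|\, \mathcal{F}_{r_1}\big] = \frac{(t-r_2)^{-2}}{h_{t-r_1}(Y_{r_1})}\, E_{Y_{r_1}}\big[e^{-\lambda' Y_{r_2 - r_1}}\big], \qquad \lambda' := \lambda + \frac{3}{2(t-r_2)^{2}}.$$
Applying formula \eqref{eq:Y:im} and using $\lambda'^{-1/2} = \sqrt{2/3}\,(\tfrac{2\lambda}{3} + (t-r_2)^{-2})^{-1/2}$, one finds $u_{\lambda'}(r_2-r_1) = \tfrac{3}{2}\big(r_2-r_1 + (\tfrac{2\lambda}{3} + (t-r_2)^{-2})^{-1/2}\big)^{-2}$ and $(t-r_2)\sqrt{2\lambda'/3} = (1 + \tfrac{2\lambda(t-r_2)^{2}}{3})^{1/2}$. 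Substituting $s = t-r_2$ into Lemma~\ref{eq:two:points} (with $t$ replaced by $t-r_1$ and $Z_t$ by $Z_{t-r_1}$) yields exactly the same conditional Laplace transform in $\lambda$, with $y = Y_{r_1}$ corresponding to $Z_{t-r_1}$.

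Combining the two steps, the Markov property on both sides and equality of transition kernels, together with the matching initial condition $Y_0 = x = Z_t$ at time $r = 0$, imply that the finite-dimensional marginals of $(Z_{t-r})_{r \in [0,t)}$ conditional on $Z_t = x$ coincide with those of $(Y_r)_{r \in [0,t)}$ under $P_x^{(t)}$. The endpoint $r = t$ presents no issue since $Z_0 = 0$ by convention while, as noted in the paragraph preceding the proposition, $Y_r \to 0$ as $r \uparrow t$ under $P_x^{(t)}$. The calculation itself is essentially algebraic and presents no serious obstacle; the only genuine conceptual input is the observation that the right-hand side of Lemma~\ref{eq:two:points} is a function of the single variable $Z_t$, which makes conditioning on $\{Z_t = x\}$ compatible with the time-inhomogeneous Markov structure of the reverse-time process.
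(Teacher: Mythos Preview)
Your proof is correct and follows essentially the same approach as the paper: both reduce the statement to equality of one-step transition kernels by exploiting the time-inhomogeneous Markov property on each side, then compute the Laplace transform of the $h$-transform kernel via \eqref{eq:Y:im} and match it against Lemma~\ref{eq:two:points}. Your explicit treatment of the endpoint $r=t$ and the remark that conditioning on $\{Z_t=x\}$ is consistent with the Markov structure (since the conditional expectation in Lemma~\ref{eq:two:points} is a function of $Z_t$ alone) are nice clarifications that the paper leaves implicit.
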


\begin{proof}
Since  $(Z_{t-r})_{r\in[0,t)}$ (under $\P$) and $(Y_r)_{r\in[0,t)}$ (under $P_x^{(t)}$) are both time-inhomogeneous 
Markov processes, it is enough to verify that they have the same transition kernels. 
For $t>0$ and $y\geq 0$, let $q_t(y,\mathrm{d} z)$ be the distribution of $Y_t$ under $P_y$, and, for every $0\leq r<s<t$,
$$
\pi_{r,s}^{(t)}(y,\mathrm{d}z) :=\frac{h_{t-s}(z)}{h_{t-r}(y)} q_{s-r}(y,\mathrm{d} z),
$$
Then  the kernels $\pi_{r,s}^{(t)}(y,\mathrm{d}z) $ are the transition kernels of $(Y_{r} )_{r\in[0,t)}$ under $P_x^{(t)}$. Next, for every $0\leq r<s<t$ and $\lambda\geq 0$, we have
\begin{align*}
\int \pi^{(t)}_{r,s}(z, \mathrm{d} y) \exp\big(-\lambda y\big)&=\Big(\frac{t-r}{t-s}\Big)^{2}\exp\Big(\frac{3}{2(t-r)^2} \,z\Big)\, E_z\Big[\exp\Big(-(\lambda+\frac{3}{2(t-s)^{2}}) Y_{s-r} \Big) \Big]
\end{align*}
and then an application of \eqref{eq:Y:im} shows that the right-hand side is  equal to
\begin{align*}
&\Bigg(\frac{t-r}{t-s+(s-r) \big(1+\frac{2\lambda (t-s)^2}{3}\big)^{1/2}}\Bigg)^{2} \\
&\quad\times \exp\Bigg(-\frac{3z}{2}\Bigg(\frac{1}{\big( s-r+ \big(\frac{2\lambda}{3} + (t-s)^{-2} \big)^{-1/2} \big)^2}-\frac{1}{\big(t-r\big)^2}\Bigg)\Bigg) .
\end{align*}
Comparing with Lemma \ref{eq:two:points}, we get
$$\mathbb{E}\Big[\exp(-\lambda Z_{t-s})\,\Big|\, \mathcal{G}_{t-r} \Big]= \int \pi^{(t)}_{r,s}(Z_{t-r}, \mathrm{d} y) \exp\big(-\lambda y\big),$$
which completes the proof of the proposition. 
\end{proof}
Proposition \ref{prop:Z:immi} entails that the process $(Z_t)_{t\geq 0}$  possesses a right-continuous modification, which we consider from now on. We turn to the second characterization of $(Z_t)_{t \geq 0}$, as a self-similar Markov process. Let $\psi$ be the function defined for $q>0$ by
$$\psi(q):=\sqrt{\frac{8}{3}}\,q\,\frac{\Gamma(q+1)}{\Gamma(q+\frac{1}{2})}.$$
Then $\psi$ is the Laplace exponent of  a spectrally negative L\'evy process $(\xi_t)_{t\geq 0}$ (meaning that $\E[\exp(\lambda \xi_t)]=\exp(t\psi(\lambda))$
for $\lambda\geq 0$). In fact $\xi$ belongs to the class of hypergeometric L\'evy processes discussed in Chapter 4 of \cite{KP}. To be precise, the 
spectrally negative case is a borderline case excluded in Theorem~4.6 of \cite{KP}, but one can still use Proposition 4.1 and Theorem 4.4 of \cite{KP} 
to get the L\'evy-Khintchine representation
$$\psi(q)=4\sqrt{\frac{2}{3\pi}}q+\frac{1}{\sqrt{6\pi}}\int_{-\infty}^{0}(\exp(q y)-1+q(1-\exp(y)))\frac{2+\exp(y)}{(1-\exp(y))^{5/2}}\exp(y)\mathrm{d} y. $$

For every $x>0$, we then write $P^\uparrow_x$ for the distribution (on the Skorokhod space $\D(\R_+,\R)$) of the self-similar Markov process with index $1/2$ and initial value $x$, which is associated 
with the L\'evy process $\xi$ via the Lamperti transformation. In other words, $P^\uparrow_x$ is the law of the process  $(x\exp(\xi_{\alpha(t)}))_{t \geq 0}$, where,
for every $t\geq 0$,  $\alpha(t) := \inf\{s \geq 0 : \sqrt{x} \int_{0}^{s} \exp(\frac{1}{2} \xi_{r}) \mathrm{d}r \geq t\}$. The self-similarity property means that, for every $\lambda>0$,
the law of $(\sqrt{\lambda}\,Y_{\lambda t})_{t\geq 0}$ under $P^\uparrow_x$ is $P^\uparrow_{x\sqrt{\lambda}}$.

Since $\xi$ has no positive jumps, we can apply \cite[Proposition 1]{BY}, which shows that $P^\uparrow_x$ converges weakly as $x\downarrow 0$ to a limiting law denoted by $P^\uparrow_0$, which is characterized by the following two properties holding for every $t>0$:
\begin{itemize}
\item[(i)] the law of $Y_t$ under $P^\uparrow_0$ is exponential with mean $2t^2/3$;
\item[(ii)] under $P^\uparrow_0$ and conditionally on $(Y_r)_{0 \leq r \leq t}$, the process $(Y_{t+s})_{s \geq 0}$ is distributed according to $P^\uparrow_{Y_t}$.
\end{itemize}
In particular, property (i) follows from the formula for the moments $E^\uparrow_0[(Y_t)^k]$ found in \cite[Proposition 1]{BY}. At this stage, we note
that \cite{BY} deals with self-similar Markov process with index $1$ (instead of $1/2$), but the results can be applied to $\sqrt{Y_t}$ under 
$P^\uparrow_x$, which is self-similar with index $1$ and such that the Laplace exponent of the associated L\'evy process is $\lambda\mapsto \psi(\lambda/2)$. The same 
remark applies to the forthcoming calculations. We finally note that the law of $(\sqrt{\lambda}\,Y_{\lambda t})_{t\geq 0}$ under $P^\uparrow_0$ is $P^\uparrow_{0}$.

\begin{proposition}
\label{peri-pro-ss}
The distribution of $(Z_t)_{t\geq 0}$ is $P_0^{\uparrow}$.
\end{proposition}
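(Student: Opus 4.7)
The plan is to apply the characterization of $P_0^\uparrow$ via properties (i) and (ii) stated just before the proposition, verifying both for $(Z_t)_{t\geq 0}$. Property (i), that $Z_t$ is exponentially distributed with mean $2t^2/3$, is exactly the marginal statement already proved in Proposition~\ref{formulas!}. Hence the entire work concerns property (ii): conditionally on $(Z_r)_{0\leq r\leq t}$, the process $(Z_{t+s})_{s\geq 0}$ should have law $P^\uparrow_{Z_t}$.

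I would first establish that $(Z_t)_{t\geq 0}$ is a time-homogeneous Markov process. The backward Markov property is contained in Proposition~\ref{prop:Z:immi}, and since the Markov property is invariant under time reversal, $(Z_t)$ is forward Markov as well. Time-homogeneity then follows from the scale invariance of the Brownian half-plane: the identity $(\HH,\lambda D,\lambda^{4} V,\Lambda(\lambda^{-2}\cdot))\stackrel{d}{=}(\HH,D,V,\Lambda)$ translates into the scaling $(\lambda^{-2}Z_{\lambda t})_{t\geq 0}\stackrel{d}{=}(Z_t)_{t\geq 0}$ for every $\lambda>0$, which combined with the Markov property forces the transition semigroup to be time-homogeneous. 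Thus $(Z_t)$ is a self-similar Markov process started from $0$ with the correct scaling index, and conditionally on $Z_t=y$ the future $(Z_{t+s})_{s\geq 0}$ has law $Q_y$ for some time-independent family $(Q_y)_{y>0}$.

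It remains to identify $Q_y$ with $P^\uparrow_y$, i.e.\ to identify the Lamperti L\'evy process associated with $(Z_t)$ as the spectrally negative process $\xi$ of Laplace exponent $\psi(q)=\sqrt{8/3}\,q\,\Gamma(q+1)/\Gamma(q+\tfrac{1}{2})$. The direct route is to compute the forward one-step Laplace transform $\mathbb{E}[\exp(-\lambda Z_{t+s})\mid Z_t=y]$ from Lemma~\ref{eq:two:points} by Bayes inversion (both $Z_t$ and $Z_{t+s}$ have explicit exponential densities), and match the result against the semigroup of $P^\uparrow_y$ expressed through the Lamperti time change of $\xi$. A conceptually cleaner alternative, which I would attempt first, is to invoke a Bertoin-Yor type time-reversal identity for spectrally negative pssMp entrance laws: conditionally on $Y_t=x$, the reversed process $(Y_{t-r})_{r\in[0,t]}$ under $P_0^\uparrow$ should coincide in law with $(Y_r)_{r\in[0,t]}$ under the $h$-transform $P^{(t)}_x$ of the CSBP with immigration appearing in Proposition~\ref{prop:Z:immi}; combined with property (i) this would immediately yield equality of finite-dimensional distributions. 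The main obstacle is precisely this identification step: recognising the gamma-function exponent $\psi$ either from a careful Bayes inversion of the explicit formula in Lemma~\ref{eq:two:points}, or from a precise invocation of the time-reversal identity relating the entrance law $P_0^\uparrow$ to the already identified $h$-transformed CSBP with immigration.
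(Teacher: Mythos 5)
Your high-level strategy (verify properties (i) and (ii) that characterize $P_0^\uparrow$) is reasonable, and you correctly identify that (i) is already done and that the crux is the identification of the transition mechanism. However, you never actually carry out that identification, and you acknowledge this yourself (``The main obstacle is precisely this identification step''). This is precisely the hard part of the proof, so the proposal is incomplete in the place where it matters most.

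Concretely, the paper does the identification by a direct two-point computation: it uses Lemma~\ref{eq:two:points} together with the known exponential law of $Z_{1+t}$ to obtain the explicit formula
\[
\mathbb{E}\Big[\exp\Big(-\tfrac{3}{2}\lambda_1 Z_1 - \tfrac{3}{2}\lambda_2 Z_{1+t}\Big)\Big]
=\Big(1+\lambda_1+\lambda_2\big(1+t(1+\lambda_1)^{1/2}\big)^2\Big)^{-1},
\]
and then, on the $P_0^\uparrow$ side, invokes the moment formula of Bertoin--Yor (\cite[Proposition 1]{BY}) for the Lamperti pssMp to compute $E_0^\uparrow[\exp(-\tfrac{3}{2}\lambda_1 Y_1)\,Y_{1+t}^p]$ for integer $p$, sums the resulting power series, and matches. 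This moment computation, which is where the specific gamma-function shape of $\psi$ actually enters, has no counterpart in your proposal: your ``Bayes inversion'' route is named but not executed, and the closed form of the Lamperti semigroup is not something that drops out for free. Having the two-point marginals match, the paper then extends to all finite-dimensional marginals by an induction that combines the backward Markov property from Lemma~\ref{eq:two:points} with property (ii) of $P_0^\uparrow$ (no separate time-homogeneity argument is needed, so your time-reversal-of-Markov plus scaling step, while not wrong, is a detour).

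Your preferred ``Bertoin--Yor type time-reversal identity'' is also problematic: you would need the statement that under $P_0^\uparrow$, conditionally on $Y_t=x$, the reversed process $(Y_{t-r})_{r\le t}$ has law $P_x^{(t)}$. Combined with Proposition~\ref{prop:Z:immi} and the equality in law of $Z_t$ and $Y_t$, that would indeed give the result; but such a duality is not a citable off-the-shelf theorem in the generality needed here, and proving it is essentially equivalent to proving the proposition itself. As written, this is a circular placeholder, not an argument. To repair the proposal, you would need to actually carry out the forward-transition computation against the Lamperti semigroup of $\xi$ (or reproduce the paper's moment-series argument), and that is where the real content of the proof lies.
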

 \begin{proof}
 We claim that,  for every $t>0$ and $\lambda_1,\lambda_2\geq 0$ such that:
 \begin{equation}\label{eq:lambda:1:lambda:2:fubini}
 \Big(1+t\big(1+\lambda_1\big)^{1/2}\Big)^2 \lambda_2<1,
 \end{equation}
 we have:
 \begin{equation}\label{eq:claim:two:points}
 \mathbb{E}\Big[\exp\Big(-\frac{3}{2}\lambda_{1} Z_{1}-\frac{3}{2}\lambda_{2} Z_{1+t}\Big)\Big]=E_0^{\uparrow}\Big[\exp\Big(-\frac{3}{2}\lambda_{1} Y_{1}-\frac{3}{2}\lambda_{2} Y_{1+t}\Big)\Big].
 \end{equation} 
  The factors $3/2$ are present only to help simplifying the expressions in the calculations below.
  Let us explain why the proposition follows from \eqref{eq:claim:two:points}. First, using \eqref{eq:claim:two:points} and a scaling argument,
  we get that, for every $0<s<t$, the pair $(Z_s,Z_t)$ is distributed as $(Y_s,Y_t)$ under $P^\uparrow_0$. 
Therefore $Z$ and $Y$ (under $P^\uparrow_0$) have the same two-dimensional marginal distributions. 
  Since both processes are also Markovian (use Lemma \ref{eq:two:points} and the subsequent comments in the case of $Z$),
  the desired result follows.

It remains to prove \eqref{eq:claim:two:points}.  Observe that, by Lemma \ref{eq:two:points} and the known distribution of $Z_{1+t}$, we have
\begin{equation}
\label{formu-23}
 \mathbb{E}\Big[\exp\Big(-\frac{3}{2}\lambda_{1} Z_{1}-\frac{3}{2}\lambda_{2} Z_{1+t}\Big)\Big]= \left(1+\lambda_{1} +\lambda_{2}\Big(1+t\big(1+\lambda_{1} )^{\frac{1}{2}}\Big)^{2}\right)^{-1}, \end{equation}
 for every $t>0$. It remains to compute the right-side hand term of \eqref{eq:claim:two:points}. To this end, we use \cite[Proposition 1]{BY} (applied to the
 process $(\sqrt{Y_t})_{t\geq 0}$ under $\P^\uparrow_x$) to get, for every integer $p\geq 1$,
 $$ E_{x}^\uparrow\Big[Y_{t}^{p}\Big]=x^{p}+ \sum \limits_{k=1}^{2p}\frac{\prod_{\ell=0}^{k-1} \psi(p-\frac{\ell}{2})}{ k !} x^{p-\frac{k}{2}} t^k.$$
Next, a direct computation using the definition of $\psi$ gives 
  $$ E_{x}^\uparrow\Big[Y_{t}^{p}\Big]= \sum \limits_{k=0}^{2p} \big(\frac{2}{3}\big)^{\frac{k}{2}}  \binom{2p}{k} \frac{p !}{\Gamma(p+1-\frac{k}{2})}x^{p-\frac{k}{2}} t^k.$$
  Recall that the law of $Y_1$ under $P^\uparrow_0$ is exponential with mean $2/3$. By property (ii) again, we have
\begin{align*}
&\big(\frac{3}{2}\big)^{p}E_0^{\uparrow}\Big[\exp\big(-\frac{3}{2}\lambda_{1} Y_{1}) Y_{1+t}^{p} \Big]\\
&\quad=\big(\frac{3}{2}\big)^{p} E_0^\uparrow\Big[\exp\big(-\frac{3}{2}\lambda_{1} Y_{1}) E_{Y_1}^\uparrow\Big[Y_{t}^{p}\Big] \Big] \\
&\quad= \frac{3}{2}\sum \limits_{k=0}^{2p} \Bigg(\int_{0}^{\infty}\mathrm{d} x ~\exp\big(- \frac{3}{2}(\lambda_{1}+1)x\big)(\frac{3x}{2})^{p-\frac{k}{2}} \Bigg) \binom{2p}{k} \frac{p !}{\Gamma(p+1-\frac{k}{2})} t^k \\
&\quad=p!\, \sum \limits_{k=0}^{2p} \binom{2p}{k}  \frac{t^k}{(1+\lambda_1)^{p+1-\frac{k}{2}}} \\
&\quad= \frac{p!}{(1+\lambda_1)} \Big(\big(1+ \lambda_1\big)^{-1/2}+t\Big)^{2p}.
\end{align*}
It follows that the radius of convergence of the power series
$$z\mapsto \sum_{p=0}^\infty \big(\frac{3}{2}\big)^{p}E_0^{\uparrow}\Big[\exp\big(-\frac{3}{2}\lambda_{1} Y_{1})\, Y_{1+t}^{p} \Big] \; \frac{z^{p}}{p !} $$
is $(1+t\big(1+\lambda_1\big)^{1/2})^{-2}$. Hence, for every $\lambda_1,\lambda_2\geq 0$ satisfying \eqref{eq:lambda:1:lambda:2:fubini}, we get
\begin{align*}
E_0^{\uparrow}\Big[\exp\Big(-\frac{3}{2}\lambda_{1} Y_{1}-\frac{3}{2}\lambda_{2} Y_{1+t}\Big)\Big]&=\frac{1}{(1+\lambda_1)} \sum_{p=0}^\infty \Big(\big(1+ \lambda_1\big)^{-1/2}+t\Big)^{2p}\, \big(-\lambda_2\big)^p\\
&= \left(1+\lambda_{1} +\lambda_{2}\Big(1+t\big(1+\lambda_{1} )^{\frac{1}{2}}\Big)^{2}\right)^{-1},
\end{align*}
and, by comparing with \eqref{formu-23}, we get the desired formula \eqref{eq:claim:two:points}.
\end{proof}

\section{A new construction of the Brownian half-plane}
\label{sec:new}

In this section, we will give a new construction of the Brownian half-plane, which is an analog
of a result proved in \cite{LGR3} for the Brownian disk. This construction will be useful
in Section \ref{sec:segment} when we study the complement of the hull centered
on a boundary segment. 

\subsection{A Brownian disk with a random perimeter}
\label{sec:Bdisk}

The goal of this section is to recall the construction of a Brownian disk with a random perimeter that
was given in \cite{LGR3}, to which we refer for more details. 
We consider a normalized Brownian excursion $(\be_t)_{0\leq t\leq 1}$,
and, conditionally on $(\be_t)_{0\leq t\leq 1}$, a Poisson point measure $\n=\sum_{j\in J}\delta_{(t_j,\omega^j)}$ on $[0,1]\times \mathcal{S}$
with intensity
$$2\,\dd t\,\N_{\sqrt{3}\,\be_t}(\dd \omega).$$
For every $j\in J$, we consider the truncation  $\wt \omega^j:=\mathrm{tr}_0(\omega^j)$ of $\omega^j$ at level $0$, and we 
 let $\mathfrak{T}^\star$ be
the compact metric space obtained from
the disjoint union
\begin{equation}
\label{tree-disk}
[0,1] \cup \Big(\bigcup_{j\in J} \t_{(\tilde\omega^j)}\Big)
\end{equation}
by identifying the root $\rho_{(\tilde\omega^j)}$ of $\t_{(\tilde\omega^j)}$
with the point $t_j$ of $[0,1]$, for every $j\in J$. The metric $\dd_{\mathfrak{T}^\star}$ on $\mathfrak{T}^\star$ is defined in the obvious manner, so that
the restriction of $\dd_{\mathfrak{T}^\star}$ to each tree $\t_{(\tilde\omega^i)}$ is 
the metric $d_{(\tilde\omega^i)}$, and $\dd_{\mathfrak{T}^\star}(u,v)=|v-u|$ if $u,v\in[0,1]$ (compare with the Caraceni-Curien construction of Section \ref{car-cur}, and see \cite[Section  4]{LGR3} for more details). 
The volume measure on $\mathfrak{T}^\star$ is the sum of the volume measures on the trees $\t_{(\tilde\omega^j)}$, $j\in J$.
We then assign labels $(\ell^\star_a)_{a\in\mathfrak{T}^\star}$ to the points of $\mathfrak{T}^\star$.
 If $a=s\in[0,1]$, we
take $\ell^\star_a:=\sqrt{3}\,\be_s$, and if $a\in \t_{(\tilde\omega^j)}$ for some $j\in J$, we 
simply let $\ell^\star_a$ be the label of $a$ in $ \t_{(\tilde\omega^j)}$. We note that the function $a\mapsto\ell^\star_a$
is continuous on $\mathfrak{T}^\star$, and that
labels $\ell^\star_a$ are
nonnegative for every $a\in\mathfrak{T}^\star$ (because we replaced each $\omega^j$ by its truncation $\wt\omega^j$). We 
define the boundary of $\mathfrak{T}^\star$ by $\partial\mathfrak{T}^\star:=\{a\in\mathfrak{T}^\star: \ell^\star_a=0\}$ and note that $0,1\in\partial\mathfrak{T}^\star$.

If $\Sigma^\star:=\sum_{j\in J}\sigma(\wt\omega^j)$ is the total mass of the volume measure, we can define a clockwise exploration $(\ee^\star_t)_{0\leq t\leq\Sigma^\star}$
of $\mathfrak{T}^\star$ by concatenating the mappings $p_{(\tilde\omega^j)}:[0,\sigma(\wt\omega^j)]\la \t_{(\tilde\omega^j)}$ in the 
order prescribed by the $t_j$'s (again see \cite{LGR3} for more details). Note that $\ee^\star_0=0$ and $\ee^\star_{\Sigma^\star}=1$.

Similarly as in Section \ref{car-cur}, the clockwise exploration allows us to define ``intervals''
in $\mathfrak{T}^\star$. For
$s,t\in[0,\Sigma^\star]$,  if $s>t$, we set $[s,t]^\star:=[s,\Sigma^\star]\cup [0,t]$ and if
$s\leq t$, $[s,t]^\star:=[s,t]$ is the usual interval.  
Then, for every $u,v\in\mathfrak{T}^\star$, there is a smallest interval $[s,t]^\star$, with $s,t\in[0,\Sigma^\star]$, such that
$\ee^\star_s=u$ and $\ee^\star_t=v$, and we define  
$$[u,v]^\star:=\big\{\ee^\star_r:r\in[s,t]^\star\big\}.$$

We then set, for every $a,b\in\mathfrak{T}^\star\backslash\partial\mathfrak{T}^\star$,
\begin{equation}
\label{Dbar1}
D_\star^\circ(a,b):=\ell^\star_a+\ell^\star_b - 2\max\Big(\min_{c\in[a,b]^\star}\ell^\star_c,\min_{c\in[b,a]^\star}\ell^\star_c\Big)
\end{equation}
if the maximum in the right-hand side is positive, and $D_\star^\circ(a,b):=\infty$ otherwise. Finally, we set,
for every $a,b\in\mathfrak{T}^\star\backslash\partial\mathfrak{T}^\star$, 
\begin{equation}
\label{Dbar2}
D_\star(a,b):=\inf_{a_0=a,a_1,\ldots,a_{p-1},a_p=b}\sum_{k=1}^p D_\star^\circ(a_{k-1},a_k)
\end{equation}
where the infimum is over all choices of the integer $p\geq 1$
and of the points $a_1,\ldots,a_{p-1}$ in $\mathfrak{T}^\star\backslash\partial\mathfrak{T}^\star$. It is not hard to
verify that $D_\star(a,b)<\infty$ (see Proposition 30 (i) in \cite{Disks} for a very similar
argument). The mapping $(a,b)\mapsto D_\star(a,b)$
is continuous on $(\mathfrak{T}^\star\backslash\partial\mathfrak{T}^\star)\times (\mathfrak{T}^\star\backslash\partial\mathfrak{T}^\star)$, 
and has a unique continuous extension to $\mathfrak{T}^\star\times \mathfrak{T}^\star$, which is a pseudo-metric on $\mathfrak{T}^\star$
\cite[Proposition 5]{LGR3}. Moreover, by \eqref{Dbar1} and \eqref{Dbar2}, we have 
$D_\star(a,b)\geq |\ell^\star_a-\ell^\star_b|$. 

 We then consider the quotient space $\U:=\mathfrak{T}^\star/\{D_\star=0\}$, and the canonical projection $\Pi_\star:\mathfrak{T}^\star\la \U$. The function  $(a,b)\mapsto D_\star(a,b)$ induces 
a metric on $\U$, which we still denote by $D_\star$, and the metric space 
$(\U,D_\star)$ is equipped with the pushforward of the 
volume measure on $\mathfrak{T}^\star$ under $\Pi_\star$, which is 
denoted by $\mathbf{V}_\star$. We also write $\partial_0\U=\Pi_\star([0,1])$ and
$\partial_1\U=\Pi_\star(\partial\mathfrak{T}^\star)$. Finally, we note that the equivalence class
of $0$ (or that of $1$) in the quotient space $\U:=\mathfrak{T}^\star/\{D_\star=0\}$ is a singleton
(this follows from \cite[Proposition 5]{LGR3}, which describes the pairs $(a,b)$ in $\mathfrak{T}^\star$
such that $D^\star(a,b)=0$).

\begin{theorem} {\rm\cite[Theorem 16]{LGR3}}
\label{cons-disk}
The random measure metric space $(\U,D_\star, \mathbf{V}_\star,\Pi_\star(0))$ is a free Brownian disk with a random 
boundary size distributed according to the measure $\frac{3}{2}\,\mathbf{1}_{\{z>1\}}z^{-5/2}\,\dd z$, which is pointed at
a uniform boundary point. Furthermore, the boundary $\partial\U$ is equal to $\partial_0\U\cup \partial_1\U$. 
\end{theorem}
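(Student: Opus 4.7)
The plan is to identify $(\U, D_\star, \mathbf{V}_\star, \Pi_\star(0))$ with a free Brownian disk of random perimeter by comparing the construction of Section \ref{sec:Bdisk} with a standard snake-based construction of the free Brownian disk of fixed perimeter $z$, and then pinning down the mixing law of the perimeter by a Laplace transform computation.

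First I would introduce the candidate perimeter $\Sigma^\partial := 1 + \mathcal{Z}^0$, where $\mathcal{Z}^0 := \sum_{j \in J} \z_0(\wt\omega^j)$ is the total exit local time at level $0$ accumulated by the truncated snake excursions. The heuristic is that as the clockwise exploration $\ee^\star$ reads off the boundary of $\U$, it first traverses the backbone $[0,1]$ (contributing the deterministic length $1$) and then, inside each $\t_{(\wt\omega^j)}$, visits the points whose labels first reach $0$ (whose total local-time mass is by construction $\mathcal{Z}^0$). Combining the Poisson intensity of $\n$ conditional on $\be$ with the standard Laplace identity
\[
\N_x\bigl(1 - e^{-\lambda \z_0}\bigr) = \frac{3}{2\bigl(x + \sqrt{3/(2\lambda)}\bigr)^2},
\]
(obtained by solving $\tfrac12 u'' = 2u^2$ on $(0,\infty)$ with $u(0)=\lambda$, and matching \eqref{hittingpro} in the limit $\lambda\to\infty$), one finds $\E[e^{-\lambda \mathcal{Z}^0}\mid \be] = \exp\bigl(-\int_0^1 (\be_t+(2\lambda)^{-1/2})^{-2}\,\dd t\bigr)$, and a Feynman-Kac computation for the normalized Brownian excursion $\be$ should then give the claimed density $\frac{3}{2}\mathbf{1}_{\{z>1\}}z^{-5/2}$ for $\Sigma^\partial$.

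Second, I would show that, conditionally on $\Sigma^\partial = z$, the quotient $(\U, D_\star, \mathbf{V}_\star, \Pi_\star(0))$ has the law of a free Brownian disk of perimeter $z$ pointed at a uniform boundary point. For this I would invoke a reference snake construction of $\D_{(z)}$ in which a first-passage Brownian bridge of length $z$ carrying non-negative labels is decorated by truncated snake excursions above it. The conditional disintegration amounts to showing that, after a uniform re-rooting of the boundary at $\Pi_\star(0)$, the pair (backbone, snake decoration) in our construction has the same law as in the reference construction; the special Markov property recalled in Section \ref{sna-mea} guarantees that, conditional on the backbone labels, both snake decorations are Poissonian with matching intensities, which makes the matching a matter of comparing the laws of the backbones.

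Finally, for $\partial \U = \partial_0\U \cup \partial_1\U$, the inclusion $\supset$ follows directly from the construction, since the boundary curve described above already has range $\partial_0\U \cup \partial_1\U$. For $\subset$, one uses the local topological structure of the Brownian snake to show that any $\Pi_\star(a)$ with $a$ an interior point of some $\t_{(\wt\omega^j)}$ and $\ell^\star_a > 0$ admits a neighborhood in $\U$ homeomorphic to an open disk, which relies on a precise description of the equivalence relation $\{D_\star = 0\}$, in the spirit of \cite[Proposition 5]{LGR3}. I expect the main obstacle to be the disintegration step of the second paragraph: reassembling the deterministic length $1$ from the backbone with the Poissonian local-time mass $\mathcal{Z}^0$ into the single first-passage bridge parameterizing $\partial\D_{(z)}$, and verifying that the outcome is genuinely a free Brownian disk of the prescribed perimeter rather than merely a topologically equivalent random metric space, is the technical core of the argument.
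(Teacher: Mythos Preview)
This theorem is not proved in the present paper: it is quoted verbatim from \cite[Theorem~16]{LGR3}, and the paper provides no argument beyond the citation. So there is no ``paper's own proof'' to compare your proposal against.

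That said, your outline is in the right spirit for how the result is actually established in \cite{LGR3}. The identification of the perimeter as $1+\mathcal{Z}^0$ and the computation of its law via the exit-measure Laplace formula and a Feynman--Kac argument for the normalized excursion is indeed the route taken there. The conditional identification step is also the heart of the matter, and you are right to flag it as the main difficulty: in \cite{LGR3} this is handled not by directly matching a ``first-passage bridge'' backbone, but by going through an intermediate representation (re-rooting the free Brownian disk at a uniform boundary point and using the equivalence of constructions established earlier in that paper). Your sketch of the boundary identification via the description of $\{D_\star=0\}$ is exactly what \cite[Proposition~5]{LGR3} provides. One small correction: the exit-measure ODE under the normalization of this paper is $\tfrac12 u'' = 2u^2$ only up to constants you should double-check; the explicit formula you wrote has the right shape but the constants deserve verification against \eqref{hittingpro}.
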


In contrast with Section \ref{Spa-Mar}, we view here the free Brownian disk as a (random) pointed measure metric space: If we condition
the boundary size of $\U$ to be equal to $S>0$, the space $(\U,D_\star, \mathbf{V}_\star,\Pi_\star(0))$  has the same
distribution as the space $(\D_{(S)},D_{(S)},V_{(S)},\Lambda_{(S)}(0))$, with the notation introduced 
at the beginning of the proof of Theorem \ref{peeling-HP}. 

We note that labels $\ell^\star_x$ make sense for $x\in\U$ (because 
$D_\star(a,b)=0$ implies $\ell^\star_a=\ell^\star_b$) and furthermore we have $D_\star(x,\partial_1\U)=\ell^\star_x$
for every $x\in\U$ (see \cite[Section 4.2]{LGR3}).

\subsection{Constructing the Brownian half-plane}

Let us start from a three-dimensional Bessel process $(R_t)_{t\geq 0}$
with $R_0=0$  and then consider a random point measure 
$\mathcal{N}_\infty=\sum_{j\in J_\infty}\delta_{(t_j,\omega^j)}$  on $\mathbb{R}_+\times\mathcal{S}$, such that, conditionally on $(R_t)_{t\geq 0}$, the measure $\mathcal{N}_\infty$ is Poisson with intensity:
$$2\,\mathrm{d}t ~\mathbb{N}_{\sqrt{3}R_t}(\rm{d} \omega).$$
For every $j\in J_\infty$, we let $\wt\omega^j$ be the truncation  of $\omega^j$ at level $0$. 

In a way similar to Sections \ref{car-cur} and \ref{sec:Bdisk}, we then introduce the geodesic space
$\mathfrak{T}_\infty$ which is obtained from the disjoint union
$$[0,\infty) \cup \Bigg(\bigcup_{j\in J_\infty} \t_{(\tilde\omega^j)}\Bigg)$$
by identifying the root of $\t_{(\tilde\omega^j)}$ with the point $t_j$ of $[0,\infty)$, for every $j\in J_\infty$.  We interpret $[0,\infty)$ as the ``spine'' of 
$\mathfrak{T}_\infty$. We also define the volume measure on $\mathfrak{T}_\infty$ as the sum of the volume measures on the trees $\t_{(\tilde\omega^j)}$, $j\in J_\infty$.

We next assign labels to $\mathfrak{T}_\infty$ by taking $\ell^\infty_u:=\sqrt{3}R_u$, if $u\in \mathbb{R}_+$, and 
by letting $\ell^\infty_u$ be the label of $u$ in $\t_{(\tilde\omega^j)}$  if $u\in \t_{(\tilde\omega^j)}$. We can also introduce a clockwise exploration
$(\mathcal{E}_{t}^\infty)_{t\geq 0}$ of  $\mathfrak{T}_\infty$, defined by concatenating the mappings $p_{(\tilde \omega^j)}$ in the order prescribed by the $t_j$'s. 
As in to Sections \ref{car-cur} and \ref{sec:Bdisk}, we then define intervals on $\mathfrak{T}_\infty$. For $s,t\in\R_+$, we set $[s,t]^\prime:= [s,\infty)\cup[0,t]$ if $s>t$ and $[s,t]^\prime:= [s,t]$ if $s\leq t$.
Then, for every $u, v \in \mathfrak{T}_\infty$, we set $[u,v]^\prime_\infty:=\{\ee_r^\infty:~r\in[s,t]^\prime\}$, where $s,t\in\R_+$ are such  that
$\ee_s^\infty=u$ and $\ee_t^\infty=v$ and the interval $[s,t]^\prime$ is as small as possible. 

Let $\partial \mathfrak{T}_\infty:= \{u\in \mathfrak{T}_\infty:~\ell_u^\infty=0\}$. For every $a,b\in\mathfrak{T}_\infty\backslash\partial\mathfrak{T}_\infty$, we set
\begin{equation}
\label{D_infty-1}
D_\infty^{\circ}(a,b):=\ell_a^\infty+\ell_b^\infty - 2\max\Big(\min_{c\in[a,b]^\prime_\infty}\ell_c^\infty,\min_{c\in[b,a]^\prime_\infty}\ell_c^\infty\Big)
\end{equation}
if the maximum in the right-hand side is positive, and $D_\infty^\circ(a,b):=\infty$ otherwise. 
Finally, in exactly the same way as we defined $D_\star$ from $D^\circ_\star$, we set,
for every $a,b\in\mathfrak{T}_\infty\backslash\partial\mathfrak{T}_\infty$, 
\begin{equation}
\label{D_infty-2}
D_\infty(a,b):=\inf_{a_0=a,a_1,\ldots,a_{p-1},a_p=b}\sum_{k=1}^p D_\infty^{\circ}(a_{k-1},a_k)
\end{equation}
where the infimum is over all choices of the integer $p\geq 1$
and of the points $a_1,\ldots,a_{p-1}$ in $\mathfrak{T}_\infty\backslash\partial\mathfrak{T}_\infty$. By
arguments similar to the proof of \cite[Proposition 30]{Disks}, one verifies that the mapping $(a,b)\mapsto D_\infty(a,b)$ takes finite values and
is continuous on $(\mathfrak{T}_\infty\backslash\partial\mathfrak{T}_\infty)\times (\mathfrak{T}_\infty\backslash\partial\mathfrak{T}_\infty)$, and that we have 
$D_\infty(a,b)\geq |\ell_a^\infty-\ell_b^\infty|$.

\begin{proposition}\label{extension-D_infty}
The function $(a,b)\mapsto D_\infty(a,b)$ has a continuous extension to $\mathfrak{T}_\infty\times \mathfrak{T}_\infty$, which is
a pseudo-metric on $\mathfrak{T}_\infty$. Furthermore,
the property $D_\infty(a,b)=0$ holds if and only if either $a$ and $b$ both belong to
$\mathfrak{T}_\infty\backslash\partial\mathfrak{T}_\infty$ and $D^{\circ}_\infty(a,b)=0$, or $a$ and $b$
both belong to $\partial\mathfrak{T}_\infty$ and we have $\{a,b\}=\{\ee^\infty_s,\ee^\infty_t\}$,
for some $0\leq s\leq t< \infty$ such that $\ell_{\ee^\infty_r}^\infty>0$ for every $r\in(s,t)$.
\end{proposition}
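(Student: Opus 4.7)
The result is the non-compact analogue of \cite[Proposition 5]{LGR3}, which establishes the same two properties for the Brownian disk construction $\mathfrak{T}^\star$ of Section \ref{sec:Bdisk}. Both constructions Poissonize truncated Brownian snake trajectories along a one-dimensional base process; the only structural difference is that the compact spine $([0,1],\sqrt{3}\,\be)$ there is replaced here by the non-compact one $([0,\infty),\sqrt{3}\,R)$. The plan is therefore to adapt the arguments of \cite[Proposition 5]{LGR3} to the present setting, with an additional localization step to cope with the non-compactness.

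I would first establish localization. Since the three-dimensional Bessel process satisfies $R_t\to\infty$ almost surely, the set of points of $\mathfrak{T}_\infty$ of label at most $L$ is, for every $L>0$, contained in the compact subspace
\[
\mathfrak{T}_\infty^T:=[0,T]\cup\Big(\bigcup_{j\in J_\infty:\,t_j\leq T}\t_{(\wt\omega^j)}\Big)
\]
as soon as $T$ is large enough. The lower bound $D^\circ_\infty\geq|\ell^\infty_a-\ell^\infty_b|$ forces any chain nearly realizing $D_\infty(a,b)$ to have intermediate labels bounded above in terms of $\ell^\infty_a\vee\ell^\infty_b$ and $D_\infty(a,b)$, so such chains can always be taken inside some $\mathfrak{T}_\infty^T$. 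On each of these subspaces, the construction is of exactly the same form as the disk construction of Section \ref{sec:Bdisk}, and the argument from the proof of \cite[Proposition 5]{LGR3} --- which relies on H\"older regularity of the labels and on local cactus-type estimates inside each $\t_{(\wt\omega^j)}$ --- yields a continuous pseudo-metric extension of $D_\infty$ to $\mathfrak{T}_\infty^T\times\mathfrak{T}_\infty^T$. Patching these extensions as $T\uparrow\infty$, and using once more the bound $D_\infty\geq|\ell^\infty_\cdot-\ell^\infty_\cdot|$ to guarantee their compatibility, gives the continuous pseudo-metric extension to $\mathfrak{T}_\infty\times\mathfrak{T}_\infty$.

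The characterization of zero-distance pairs then follows the same case analysis as in \cite[Proposition 5]{LGR3}. The \emph{if} direction is immediate in the interior; when $a=\ee^\infty_s$ and $b=\ee^\infty_t$ both lie in $\partial\mathfrak{T}_\infty$ with $\ell^\infty_{\ee^\infty_r}>0$ for every $r\in(s,t)$, interior approximations $a_n=\ee^\infty_{s_n}$, $b_n=\ee^\infty_{t_n}$ with $s_n\downarrow s$ and $t_n\uparrow t$ satisfy, by the snake property, $\ell^\infty_{a_n}+\ell^\infty_{b_n}\to 0$ together with $\min_{c\in[a_n,b_n]^\prime_\infty}\ell^\infty_c\to 0$ and $\min_{c\in[b_n,a_n]^\prime_\infty}\ell^\infty_c\to 0$, so that $D^\circ_\infty(a_n,b_n)\to 0$. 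For the \emph{only if} direction, the localization above reduces the statement to a fixed $\mathfrak{T}_\infty^T$; the corresponding assertion there is obtained by applying the special Markov property of the Brownian snake (Section \ref{sna-mea}) inside each $\omega^j$, together with the conditional independence of the $\omega^j$'s given $(R_t)_{t\geq 0}$, to produce a strictly positive lower bound on the length of any chain connecting two non-equivalent boundary points.

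The main obstacle is the \emph{only if} direction: even after localization, one must rule out the existence of chains of arbitrarily small total cost that connect two boundary points failing the exploration condition but traverse many distinct excursion trees $\t_{(\wt\omega^j)}$. The resolution, as in \cite{LGR3}, is to decompose any such chain according to its successive visits of the spine and to apply the snake special Markov property excursion by excursion; since this part of the argument is local and purely tree-combinatorial, its transfer from the Brownian disk setting is essentially mechanical once the localization to $\mathfrak{T}_\infty^T$ is in place.
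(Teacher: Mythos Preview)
Your localization step contains a genuine error. You assert that because $R_t\to\infty$, the set $\{a\in\mathfrak{T}_\infty:\ell^\infty_a\leq L\}$ is contained in $\mathfrak{T}_\infty^T$ for $T$ large. This is false: the truncated tree $\t_{(\wt\omega^j)}$ carries labels down to $0$ whenever $W_*(\omega^j)\leq 0$, and since $\int_\ve^\infty R_t^{-2}\,\dd t=\infty$ a.s.\ for the three-dimensional Bessel process, formula \eqref{hittingpro} shows that this occurs for infinitely many $j$ with arbitrarily large $t_j$ (this is also the content of Lemma~\ref{inf:T:esp}(i)). Hence $\partial\mathfrak{T}_\infty$, and with it $\{a:\ell^\infty_a\leq L\}$ for every $L\geq 0$, is unbounded in $\mathfrak{T}_\infty$. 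Bounding the labels along a near-optimal chain therefore does \emph{not} confine the chain to any $\mathfrak{T}_\infty^T$, and the remainder of your argument, which rests on this reduction to a compact subspace, does not go through as written.

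The paper simply asserts that the proof of \cite[Proposition 5]{LGR3} carries over verbatim, with no localization step. The implicit point is that the arguments there --- continuity of the extension near a boundary point, and the zero-set characterization --- are local in nature: they concern the behaviour of labels and of $D^\circ$ in a neighbourhood of the relevant points and never use compactness of $\mathfrak{T}^\star$. If you want to make the transfer explicit, the correct route is to verify that each estimate in the proof of \cite[Proposition 5]{LGR3} depends only on the local tree structure and the label regularity, rather than to attempt a global confinement via label bounds that cannot hold in the present setting.
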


This is an analog of Proposition 5 in \cite{LGR3}, which deals with the metric space $(\mathfrak{T}^\star,D_\star)$ introduced above. The proof
is exactly the same and we omit the details.

We  then consider the quotient space $\mathfrak{H}'=\mathfrak{T}_\infty/\{D_\infty=0\}$, and the canonical projection $\Pi_\infty:\mathfrak{T}_\infty\mapsto\mathfrak{H}'$.  The metric space 
$(\mathfrak{H}',D_\infty)$ is equipped with the distinguished point $\Pi_\infty(0)$ and the pushforward of the 
volume measure on $\mathfrak{T}_\infty$ under $\Pi_\infty$,  which will be 
denoted by $\mathbf{V}_\infty$. We also write $\partial_\circ\mathfrak{H}'=\Pi_\infty([0,\infty))$ and
$\partial_1\mathfrak{H}'=\Pi_\infty(\partial\mathfrak{T}_\infty)$.  Then we notice that  labels $\ell_x^\infty$ make sense for $x\in\mathfrak{H}'$ (again since 
$D_\infty(a,b)=0$ implies $\ell_a^\infty=\ell_b^\infty$). 

We finally define a decorating curve $\Lambda'$ of $\HH'$. First, we take $\Lambda'(t)=\Pi_\infty(t)$
for every $t\geq 0$. To define $\Lambda'(t)$ when $t\leq 0$,  we set, for every $s\geq 0$,
$$L^{\mathfrak{H}',0}_s:=\sum_{j\in J_\infty} L^0_{(s-\alpha_j)^+}(\wt \omega^j),$$
where $\alpha_j:=\inf\{s\geq 0:\mathcal{E}^\infty_s\in\t_{(\tilde \omega^j)}\}$, and we recall that
$(L^0_s(\wt\omega^j))_{s\geq 0}$ denotes the exit local time at $0$ of
the truncated snake trajectory $\wt\omega^j$ (see the end of Section \ref{sna-mea}). Then, for every $t\leq 0$, if
$$\tau_t:=\inf\{s\geq 0:L^{\mathfrak{H}',0}_s\geq -t\},$$
we define $\Lambda'(t):=\Pi_\infty(\mathcal{E}^\infty_{\tau_t})$.
Using arguments similar to those used to study the path $\Lambda^{\bullet,r}$ at the end of Section \ref{car-cur},
one verifies that the path $t\mapsto\Lambda(t)$ is continuous and injective, and
 we have $\partial_0\mathfrak{H}'=\{\Lambda'(t):t\geq 0\}$ and $\partial_1\mathfrak{H}'=\{\Lambda'(t):t\leq 0\}$ .

\begin{theorem}
\label{new-const-half-plane}
The random curve-decorated measure metric space $(\mathfrak{H}',D_\infty,\mathbf{V}_\infty,\Lambda')$ is a curve-decorated Brownian half-plane. Furthermore, we have $D_\infty(x,\partial_1\mathfrak{H}')=\ell_x^\infty$,
for every $x\in\mathfrak{H}'$, and in particular  the process $(D_\infty(\Lambda'(t),\partial_1\HH'))_{t\geq 0}$ is distributed as a three-dimensional Bessel process
started from $0$.
\end{theorem}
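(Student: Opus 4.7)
The strategy is to mirror the coupling argument of Theorem \ref{peeling-HP}: identify $(\HH',D_\infty,\mathbf{V}_\infty,\Lambda')$ as the local scaling limit, around the distinguished boundary point, of a Brownian disk with large perimeter, and compare it to the known such limit provided by \cite[Proposition 4.2]{GM}.

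First, for each $S>0$, form the free Brownian disk of perimeter $S$ via the tree-based recipe of Section \ref{sec:Bdisk} (rescaling the random-perimeter construction of Theorem \ref{cons-disk}). After rescaling, the boundary is encoded by a Brownian excursion $(\be^{(S)}_t)_{0\leq t\leq S}$, and the decorating snakes form, conditionally on the excursion, a Poisson point measure on $[0,S]\times\S$ of intensity $2\,\dd t\,\N_{\sqrt{3}\be^{(S)}_t}(\dd\omega)$, to which the truncation at level $0$ is applied. Call the resulting curve-decorated space $(\D_{(S)},D_{(S)},V_{(S)},\Lambda_{(S)})$. As recalled in the proof of Theorem \ref{peeling-HP}, it converges, as $S\to\infty$, to the curve-decorated Brownian half-plane $(\HH,D,V,\Lambda)$ in the $\dd_{\mathrm{GHPU}}^\infty$ topology.

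Second, analyze the same construction of $\D_{(S)}$ around the distinguished point. By a standard local limit for the Brownian excursion near an endpoint (Imhof's theorem, or direct rescaling), $(\be^{(S)}_t)_{0\leq t\leq T}$ converges in distribution to $(R_t)_{0\leq t\leq T}$ for each fixed $T>0$; a marking argument then gives the joint convergence of the restricted Poisson point measure to the restriction of $\mathcal{N}_\infty$ to $[0,T]\times\S$. Applying the truncation at level $0$ to each snake, the encoded labeled trees converge, and the quotient spaces $(\D_{(S)},D_{(S)},V_{(S)})$ converge locally around the base point to $(\HH',D_\infty,\mathbf{V}_\infty)$ in the $\dd_{\mathrm{GHPU}}^\infty$ sense. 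For the boundary curve, the positive side of $\Lambda_{(S)}$ is the image of the spine and converges to $\Pi_\infty(t)=\Lambda'(t)$; the negative side of $\Lambda_{(S)}$ is parametrized, on the Brownian disk, via the accumulated exit local time at $0$ of the truncated snakes, which is precisely the recipe defining $\Lambda'(t)$ for $t\leq 0$. Combining the two limits gives that $(\HH',D_\infty,\mathbf{V}_\infty,\Lambda')$ is a curve-decorated Brownian half-plane.

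For the distance formula, start from the identity $D_\star(x,\partial_1\U)=\ell_x^\star$ valid in the Brownian disk construction (recalled at the end of Section \ref{sec:Bdisk} and proved in \cite[Section 4.2]{LGR3}); via the local convergence of the previous step, this identity transfers to $D_\infty(x,\partial_1\HH')=\ell_x^\infty$ on all of $\HH'$. Since $\Lambda'(t)=\Pi_\infty(t)$ for $t\geq 0$, one obtains $D_\infty(\Lambda'(t),\partial_1\HH')=\ell^\infty_t=\sqrt{3}R_t$, whence the three-dimensional Bessel process conclusion after the obvious time-scaling.

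The main obstacle will be the lifting step: verifying that the joint convergence at the level of excursion, Poisson point measure and truncated labeled trees genuinely upgrades to $\dd_{\mathrm{GHPU}}^\infty$ convergence of the curve-decorated quotient spaces around $\Pi_\infty(0)$. The point is that one must rule out the possibility that geodesics realizing $D_{(S)}$ between two points close to the base point take a macroscopic excursion through the bulk of $\D_{(S)}$ when $S$ is large. This should be handled in the same spirit as in the proof of Theorem \ref{peeling-HP}, where the intrinsic metric on a ball was identified with the ambient one via a cactus-type bound, combined with the existing analysis of the infimum in \eqref{Dbar2} carried out in \cite[Proposition 5]{LGR3} and \cite[Proposition 30]{Disks}, which transfers verbatim to the non-compact setting once the relevant uniform estimates on labels are established.
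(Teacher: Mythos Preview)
Your plan is essentially the paper's own approach: it, too, realises $\HH'$ as the local limit of a rescaled Brownian disk (namely $\lambda\cdot\U$, equivalently your $\D_{(S)}$ for large $S$), couples the normalized excursion with the three-dimensional Bessel process on an initial interval, and then shows that the balls of small radius around the base point in the two quotient spaces are isometric. The lifting step you flag is handled in the paper exactly where you suggest, via an analog of \cite[Proposition 5]{LGR3} together with a short lemma (Lemma~\ref{inf:T:esp}) controlling $\inf_{v\notin\mathfrak{T}_\infty^\ve}D_\infty(0,v)$, which lets one restrict the infimum in \eqref{D_infty-2} to points of $\mathfrak{T}_\infty^\ve$.

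The one place where the paper proceeds differently is the identity $D_\infty(x,\partial_1\HH')=\ell_x^\infty$: rather than transferring it from the disk via the local limit, the paper gives a two-line direct argument, using the bound $D_\infty(a,b)\geq|\ell_a^\infty-\ell_b^\infty|$ for the lower bound and an explicit geodesic (following the exploration backwards to the first point with label $0$) for the upper bound. This is simpler and avoids any care about whether the nearest boundary point survives the local coupling.
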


The best way to understand the relation between Theorem \ref{new-const-half-plane} and the Caraceni construction
of Section~\ref{car-cur} is to consider the analogous results for the Brownian disk, namely Theorem \ref{cons-disk} above
and the construction of the Brownian disk ``viewed from a boundary point'' in \cite{Repre}. The construction
of Theorem \ref{cons-disk} yields a Brownian disk $\U$ with a random boundary size, where labels correspond to distances
from the part $\partial_1\U$ of the boundary, and these distances evolve like a Brownian excursion $\be$
along the complementary part $\partial_0\U$ of the boundary. Conditioning $\partial_1\U$ to be a single point (equivalently, none
of the snake trajectories $\omega^j$ in Section \ref{sec:Bdisk} hits $0$, so that the boundary size of $\U$ is $1$) turns the Brownian excursion $\be$ 
into a five-dimensional Bessel bridge, and we recover the construction of \cite{Repre}
where labels corresponds to distances from a distinguished point of the boundary
(in particular these distances evolve like a five-dimensional Bessel bridge along the boundary). The latter conditioning is
degenerate, but the preceding assertions can be made rigorous and explain the relation between Theorem \ref{cons-disk} and \cite{Repre}.
Then, at least informally, the relation between the two half-plane constructions can be obtained by
letting the boundary sizes of the Brownian disks tend to infinity: in this limit, the Brownian excursion $\be$
behaves locally like a three-dimensional Bessel process, and the five-dimensional Bessel bridge near its initial and terminal times
gives rise to two independent five-dimensional Bessel processes.

Before turning to the proof of Theorem \ref{new-const-half-plane}, we state a preliminary lemma. For every $\ve>0$, we let
  $\mathfrak{T}^{\ve}_{\infty}$ be the closed subset of $\mathfrak{T}_\infty$ consisting of the
 part $[0,\ve]$ of the spine and of the subtrees branching off $[0,\ve]$.

\begin{lemma}\label{inf:T:esp}
For every $\ve>0$, the following properties hold a.s.

\smallskip
\noindent{\rm(i)} Labels vanish both on $\mathfrak{T}^{\ve}_\infty\backslash\{0\}$ and on $\mathfrak{T}_\infty\setminus\mathfrak{T}^\ve_\infty$.

\smallskip
\noindent{\rm(ii)} We have
$$\inf \limits_{v\in\mathfrak{T}_\infty\setminus \mathfrak{T}^{\varepsilon}_\infty } D_\infty(0, v)>0.$$
\end{lemma}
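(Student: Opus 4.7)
I would establish (i) by a direct Poisson intensity computation combined with Brownian scaling, and deduce (ii) from (i) together with Proposition~\ref{extension-D_infty}, first obtaining pointwise positivity and then upgrading to a uniform lower bound by an escape-to-infinity argument plus a cactus-type estimate.

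For (i), condition on the Bessel process $R$. The number of $j$ with $t_j\in[0,\varepsilon]$ and $W_*(\omega^j)\leq 0$ is then Poisson with mean
$$I_\varepsilon := 2\int_0^\varepsilon \mathbb{N}_{\sqrt{3}R_t}(W_*\leq 0)\,dt = \int_0^\varepsilon \frac{dt}{R_t^2}$$
by \eqref{hittingpro}. I claim $I_\varepsilon=+\infty$ a.s. By Brownian scaling of the three-dimensional Bessel process started from $0$, the law of $I_\varepsilon$ does not depend on $\varepsilon>0$; yet $I_\varepsilon$ is strictly increasing in $\varepsilon$ (the integrand is a.s.\ continuous and strictly positive for $t>0$), so two random variables with the same distribution one strictly dominating the other can only both equal $+\infty$ a.s. Therefore infinitely many $\omega^j$ reach level~$0$, and each truncation $\wt\omega^j$ carries a point of label exactly $0$ inside $\mathfrak{T}^\varepsilon_\infty\setminus\{0\}$. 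The statement on $\mathfrak{T}_\infty\setminus\mathfrak{T}^\varepsilon_\infty$ follows identically from $\int_\varepsilon^\infty dt/R_t^2=+\infty$ a.s., via the same scaling argument with decreasing monotonicity.

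For the pointwise part of (ii), I show $D_\infty(0,v)>0$ for every $v\in\mathfrak{T}_\infty\setminus\{0\}$. If $v\notin\partial\mathfrak{T}_\infty$, this is immediate from $D_\infty(0,v)\geq|\ell^\infty_0-\ell^\infty_v|=\ell^\infty_v>0$. If $v=\mathcal{E}^\infty_t\in\partial\mathfrak{T}_\infty$ with $t>0$, then Proposition~\ref{extension-D_infty} implies that $D_\infty(0,v)=0$ would require $\ell^\infty_{\mathcal{E}^\infty_r}>0$ for every $r\in(0,t)$; but by (i), labels vanish on $\mathfrak{T}^\varepsilon_\infty\setminus\{0\}$ for arbitrarily small $\varepsilon$, so the exploration encounters points of label $0$ at arbitrarily small times, violating this requirement.

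To upgrade to the uniform bound in (ii), I would argue by contradiction. Suppose $v_n\in\mathfrak{T}_\infty\setminus\mathfrak{T}^\varepsilon_\infty$ with $D_\infty(0,v_n)\to 0$. Since $d_{\mathfrak{T}_\infty}(v_n,0)\geq\varepsilon$ and $D_\infty$ is continuous on $\mathfrak{T}_\infty\times\mathfrak{T}_\infty$, pointwise positivity prevents any convergent subsequence in $\mathfrak{T}_\infty$. Local compactness of $\mathfrak{T}_\infty$ (standard snake bounds, since $\mathbb{N}_x(\sup\zeta>\eta)=(2\eta)^{-1}$ is finite, imply that only finitely many subtrees grafted onto any bounded spine interval exceed a fixed height) then forces $v_n$ to escape every compact set. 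If $v_n$ itself lies on the spine with parameter $s_n\to\infty$, transience of the Bessel~$3$ process yields $D_\infty(0,v_n)\geq\ell^\infty_{v_n}=\sqrt{3}R_{s_n}\to\infty$, a contradiction. The main obstacle is the remaining case where $v_n$ lies in subtrees rooted at $t_{j(n)}\to\infty$ with $\ell^\infty_{v_n}$ staying small, for which the naive bound $D_\infty(0,v_n)\geq\ell^\infty_{v_n}$ is insufficient. My plan to handle this is to invoke the special Markov property of the Brownian snake at the positive level $\delta:=\sqrt{3}\min_{t\in[\varepsilon/2,\varepsilon]}R_t>0$, combining it with a cactus-type estimate that shows any chain $0=a_0,a_1,\dots,a_p=v_n$ must have some $D^\circ_\infty(a_{k-1},a_k)$ bounded below by a positive quantity depending only on $\varepsilon$ and on $R|_{[\varepsilon/2,\varepsilon]}$, yielding the required uniform lower bound.
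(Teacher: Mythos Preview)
Your treatment of (i) is correct and essentially matches the paper: both reduce to showing $\int_0^\varepsilon dt/R_t^2=\int_\varepsilon^\infty dt/R_t^2=+\infty$ a.s. Your scaling argument for this divergence is a pleasant addition (the paper simply asserts the divergence as known). Your pointwise positivity argument at the start of (ii) is also fine.

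The gap is in the uniform bound of (ii). You correctly reduce to the case where $v_n$ escapes every compact set, and you correctly identify the obstacle: points $v_n$ lying in subtrees rooted at $t_{j(n)}\to\infty$ may have $\ell^\infty_{v_n}$ arbitrarily small (indeed, by your own part (i) labels vanish on $\mathfrak{T}_\infty\setminus\mathfrak{T}^\varepsilon_\infty$), so the crude bound $D_\infty(0,v_n)\geq\ell^\infty_{v_n}$ gives nothing. Your proposed fix via ``the special Markov property at level $\delta=\sqrt{3}\min_{[\varepsilon/2,\varepsilon]}R_t$'' combined with a cactus-type estimate is not carried out, and it is not clear how to make it work: the special Markov property as stated concerns a single snake trajectory, not the spine-plus-subtrees structure, and the spine labels $\sqrt{3}R_t$ on $[\varepsilon/2,\varepsilon]$ do not obviously control $D^\circ_\infty$-costs for chains that may jump between distant subtrees via intervals passing through~$\infty$.

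The paper avoids this difficulty by a direct bottleneck argument. Let $v_{(\varepsilon)}$ be the last point of $\mathfrak{T}^\varepsilon_\infty\cap\partial\mathfrak{T}_\infty$ visited by the exploration, at exploration time $r_{(\varepsilon)}$. One shows that any finite chain from a point near $0$ to $v\in\mathfrak{T}_\infty\setminus\mathfrak{T}^\varepsilon_\infty$ with finite total $D^\circ_\infty$-cost must contain an intermediate point on the geodesic ray $\llbracket v_{(\varepsilon)},\infty\llbracket$: otherwise, at the first step where the chain jumps past exploration time $r_{(\varepsilon)}$, one of the two intervals in \eqref{D_infty-1} contains $v_{(\varepsilon)}$ (label $0$) while the other wraps through $\mathfrak{T}_\infty\setminus\mathfrak{T}^\varepsilon_\infty$ (where labels also vanish by (i)), forcing $D^\circ_\infty=\infty$. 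This gives $D_\infty(0,v)\geq\inf_{u\in\llbracket v_{(\varepsilon)},\infty\llbracket}D_\infty(0,u)$. On this ray, labels are positive except at $v_{(\varepsilon)}$ and tend to $+\infty$ along the spine, so if the infimum were $0$ one would have $u_n\to v_{(\varepsilon)}$ and hence $D_\infty(0,v_{(\varepsilon)})=0$, contradicting Proposition~\ref{extension-D_infty}. This is exactly the ``separating structure'' your cactus heuristic is reaching for; the missing idea is to use the last zero-label point $v_{(\varepsilon)}$ and the ray issuing from it, rather than a label-level cut.
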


\proof \noindent{\rm(i)} To prove that labels vanish on $\mathfrak{T}_\infty\setminus\mathfrak{T}^\ve_\infty$, it is enough to verify that there exists $j\in J_\infty$ with $t_j>\ve$, such that $W_*(\omega^j)\leq 0$. Recall that, conditionally on $(R_t)_{t\geq 0}$, the measure $\mathcal{N}_\infty$ is Poisson with intensity $2\mathbf{1}_{\{t\geq 0\}}\,\mathrm{d}t \,\mathbb{N}_{\sqrt{3}R_t}(\rm{d} \omega).$  Consequently, an application of \eqref{hittingpro} gives
$$\mathbb{P}\Big(W_*(\omega^j)>0\hbox{ for every }j\in J_\infty\hbox{ such that } t_j>\ve\Big)=\mathbb{E}\Big[\exp\big(-\int_\ve^\infty \frac{\dd t}{R_t^2}\big)\Big]. $$
The fact that $\int_\ve^\infty \dd t~R_t^{-2}=\infty$, a.s., yields the desired result. The same argument applies to verify that 
labels vanish on $\mathfrak{T}^{\ve}_\infty\backslash\{0\}$.

\noindent{\rm(ii)} 
Let $v_{(\varepsilon)}$ be the last point of $\mathfrak{T}^{\ve}_{\infty}\cap\partial \mathfrak{T}_\infty$ visited by
 the exploration
$(\mathcal{E}_{t}^\infty)_{t\geq 0}$ of  $\mathfrak{T}_\infty$ and let $r_{(\ve)}\in(0,\infty)$ such that $\mathcal{E}_{r_{(\ve)}}^\infty=v_{(\ve)}$.
We then claim that, for any $v\in \mathfrak{T}_\infty\backslash\mathfrak{T}^{\varepsilon}_\infty$, 
\begin{equation}
\label{claim}D_\infty(0,v)\geq \inf\limits_{u\in \llbracket v_{(\varepsilon)},\infty \llbracket}  D_\infty(0,u),
\end{equation}
 where  $\llbracket v_{(\varepsilon)},\infty\llbracket$ stands for the geodesic line connecting $v_{(\varepsilon)}$ to $\infty$ in  $\mathfrak{T}_\infty$. 
 Let us justify our claim. The continuity of
 $v\mapsto D_\infty(0,v)$ allows us to assume that $v\notin \partial\mathfrak{T}_\infty$. Then, let $\delta\in(0,\ve)$. We observe that, in formula \eqref{D_infty-2}
 applied to $D_\infty(\delta,v)$, we may restrict our attention to points $a_0,a_1,\ldots,a_p$ such that there is (at least) one value of $j\in\{1,\ldots,p-1\}$
 such that $a_j\in \llbracket v_{(\varepsilon)},\infty\llbracket$: if not the case, by letting $k$ be the first index $j\in\{1,\ldots,p\}$ such 
 that $a_j\in \{\mathcal{E}^\infty_t:t>r_{(\ve)}\}$, 
 we would have $v_{(\ve)}\in [a_{k-1},a_k]_\infty^\prime$, and thus $D^\circ_\infty(a_{k-1},a_{k})=\infty$.
It follows that $D_\infty(\delta,v)$ is bounded below by
 the right-hand side of \eqref{claim}, and our claim follows by letting $\delta\to 0$.

The proof then reduces to checking that the right-hand side of \eqref{claim} is positive. We argue by contradiction. Asume that there is a sequence $(u_n)_{n\geq 1}$ in  
 $\llbracket v_{(\varepsilon)},\infty\llbracket$ such that $D_\infty(0, u_n)\la 0$ as $n\to\infty$. The bound $D_\infty(a,b)\geq |\ell_a^\infty-\ell_b^\infty|$ ensures that $\ell_{u_n}^\infty\la 0$ and this implies that $u_n\la  v_{(\varepsilon)}$ in $\mathfrak{T}_\infty$.  It follows that $D_\infty(0, v_{(\varepsilon)})=0$, which contradicts Proposition \ref{extension-D_infty}. 
\endproof 

\proof[Proof of Theorem \ref{new-const-half-plane}] 
For the sake of simplicity, we will not consider the decorating curve and we will content ourselves with proving that the random pointed 
measure metric space $(\mathfrak{H}',D_\infty,\mathbf{V}_\infty,\Pi_\infty(0))$ is a Brownian half-plane, whose
boundary is $\partial_0\HH'\cup\partial_1\HH'$. 
With a little more work, the arguments that follow can be extended to also include the decorating curve (instead of Theorem \ref{cons-disk} above, we
need the precise form of \cite[Theorem 16]{LGR3} including the decorating curve of $(\U,D_\star, \mathbf{V}_\star)$, which is in fact defined
in a way very similar to $\Lambda'$).

Recall the construction of the Brownian disk
$(\U,D_\star,\mathbf{V}_\star,\Pi_\star(0))$ in the previous section. The scaling property of the Brownian disk implies that, for every $\lambda>0$, the  random pointed measure metric space  $\lambda\cdot \U:=(\U,\lambda D_\star,\lambda^{4} \mathbf{V},\Pi_\star(0))$ is a 
 free Brownian disk 
with a random perimeter distributed according to the measure $\frac{3}{2}\,\lambda^{3/2}\,z^{-5/2}\,\mathbf{1}_{\{z>\lambda\}}\,\dd z$, which is pointed at a uniform boundary point.  We will prove that
\begin{equation}\label{local:limit}
\lambda \cdot  \U\build{\la}_{\lambda\to\infty}^{\rm(d)}\mathfrak{H}'
\end{equation}
in distribution in the sense of the space $\M^{\mathrm{GHPU}}_\infty$ (recall that our
pointed measure metric spaces are viewed as elements of $\M^{\mathrm{GHPU}}_\infty$ whose decorating curve is constant). The 
fact that  $(\mathfrak{H}',D_\infty,\mathbf{V}_\infty,\Pi_\infty(0))$ is a Brownian half-plane will follow  since one knows that the Brownian half-plane is the limit 
in distribution (in the space $\M^{\mathrm{GHPU}}_\infty$) of 
Brownian disks pointed at a uniform boundary point whose boundary size tends to $\infty$ --- this follows from the coupling argument 
already used at the beginning of the proof of Theorem \ref{peeling-HP}.

The proof of \eqref{local:limit} is based again on a coupling argument. 
Let $K>0$ and $\delta>0$. Our claim \eqref{local:limit} will follow if we can prove that, for $\lambda$ large enough, we can couple $\U$ and $\mathfrak{H}'$ in such a way that
 the balls $B_K(\lambda\cdot \U)$ and $B_K(\mathfrak{H}')$ are isometric with probability at least $1-\delta$
 (we require that the isometry preserves the volume measure and the distinguished point).  Equivalently, using a scaling argument, it suffices to prove that, for $\eta>0$ small enough, 
  $\U$ and $\mathfrak{H}'$ can be coupled so that 
$B_{\eta}(\U)$ and $B_{\eta}(\mathfrak{H}')$ are isometric with probability at least $1-\delta$
(again with an isometry preserving the volume measure and the distinguished point). 

Recall the point measure $\mathcal{N}=\sum_{j\in J} \delta_{(t_i,\omega^i)}$ introduced at the beginning of Section \ref{sec:Bdisk} and used to construct $\mathfrak{T}^\star$, 
and the point measure $\mathcal{N}_\infty=\sum_{j\in J_\infty} \delta_{(t_i,\omega^i)}$ used to construct $\mathfrak{T}_\infty$. For every $\ve >0$, set 
$$\n^\varepsilon= \sum \limits_{j\in J,t_j\leq \ve} \delta_{(t_j,\omega^j)},\quad \mathcal{N}^\ve_\infty=\sum_{j\in J_\infty,t_j\leq\ve}\delta_{(t_j,\omega^j)}.$$
We now fix $\delta>0$ and claim that:
\begin{itemize}
\item[$1.$] For $\ve\in(0,1)$ small enough,  $(\be,\n)$ and $(R,\n_\infty)$ can be coupled in such a way that
the equality $((\be_t)_{t\leq\varepsilon},\n^{\varepsilon})= ((R_t)_{t\leq\varepsilon},\n_\infty^\varepsilon)$
holds with probability at least $1-\frac{\delta}{2}$. 
\item[$2.$] For $\ve\in(0,1)$ small enough, we can choose $\eta_0>0$ so that for every $0<\eta\leq \eta_0$, we have
$$B_\eta(\U)=B_\eta(\mathfrak{H}')$$
on the event where $((\be_t)_{t\leq\varepsilon},\n^{\varepsilon})= ((R_t)_{t\leq\varepsilon},\n_\infty^\varepsilon)$, except possibly on an event of probability at most $\frac{\delta}{2}$. 
\end{itemize} 
The formula $B_{\eta}(\U)=B_{\eta}(\mathfrak{H}')$ in Property 2 is understood as an equality of pointed measure 
metric spaces modulo isometries ($B_{\eta}(\U)$, resp.~$B_{\eta}(\mathfrak{H}')$,
is equipped with the distinguished point and the restriction of the volume  measure of $\U$, resp.~of $\mathfrak{H}'$).

 As explained above, our claim \eqref{local:limit} follows from Properties 1 and 2. Property 1 is a consequence of the following classical  fact. For every $\delta$, we can find $\ve\in(0,1)$  and a coupling between  $\be$ and $R$ such that the equality $(\be_t)_{0\leq t\leq\varepsilon}= (R_t)_{0\leq t\leq\varepsilon}$
holds with probability at least $1-\frac{\delta}{2}$ (see e.g.~Proposition 3 in \cite{plane} and the proof of Proposition 4 in the same reference for a stronger statement).

 It remains to verify Property 2. Informally, Property 1 provides a coupling under which the pairs $(\be,\mathcal{N})$ and 
$(R, \mathcal{N}_\infty)$ coincide ``near the origin''. Since the pair $(\be,\mathcal{N})$ encodes the Brownian disk $\U$,
and the pair $(R, \mathcal{N}_\infty)$ encodes the half-plane $\mathfrak{H}'$, it is not surprising that this coupling 
yields the equality of the balls $B_\eta(\U)$ and $B_\eta(\mathfrak{H}')$ for $\eta$ small enough. Making this argument 
rigorous is however not immediate, in particular because the distances in $B_\eta(\U)$ or in $B_\eta(\mathfrak{H}')$
may depend on the encoding processes ``far from the origin''. For this reason, we will provide some details.

 By Property 1, we can fix $\ve\in(0,1)$ small and assume that
the event where $((\be_t)_{t\leq\varepsilon},\n^{\varepsilon})= ((R_t)_{t\leq\varepsilon},\n_\infty^\varepsilon)$  has probability greater than $1-\frac{\delta}{2}$. We  
denote the latter event by $\mathcal{A}_1$.

By Lemma \ref{inf:T:esp}, we can fix $\eta>0$ small enough so that the set $\{v\in\mathfrak{T}_\infty: D_\infty(0,v)\leq 4\eta\}$ is contained 
in $\mathfrak{T}_\infty^\ve$, except possibly on an event of probability at most $\frac{\delta}{6}$. Moreover, if the latter property holds, we have also
\begin{equation}
\label{tech-dist}
D_\infty(u,v) = \build{\inf_{u_0=u,u_1,\ldots,u_p=v}}_{u_1,\ldots,u_{p-1}\in \mathfrak{T}_\infty^\ve\setminus \partial \mathfrak{T}_\infty}^{} \sum_{i=1}^pD^{\circ}_\infty(u_{i-1},u_i),
\end{equation}
for every $u,v\in\mathfrak{T}^{\ve}_\infty\setminus \partial \mathfrak{T}_\infty$
such that $D_\infty(0,u)\leq \eta$ and $D_\infty(0,v)\leq \eta$. Let us explain why  \eqref{tech-dist} holds. 
Suppose that $u,v\in\mathfrak{T}^{\ve}_\infty\setminus \partial \mathfrak{T}_\infty$ are such that $D_\infty(0,u)\leq \eta$, $D_\infty(0,v)\leq \eta$, so that
in particular $D_\infty(u,v)\leq 2\eta$. We can find $u_0=u,u_1,\ldots,u_q=v$
in $\mathfrak{T}_\infty\setminus   \partial\mathfrak{T}_\infty$ such that 
$$\sum_{i=1}^q D_\infty^{\circ}(u_{i-1},u_i)<D_\infty(u,v)+\eta\leq 3\eta.$$
The triangle inequality then implies that $D_\infty(u,u_i)< 3\eta$ and $D_\infty(0,u_i)<4\eta$, for every $i\in\{0,1,\ldots,q\}$.  Since we assumed
that $\{v\in\mathfrak{T}_\infty: D_\infty(0,v)\leq 4\eta\}$ is contained 
in $\mathfrak{T}_\infty^\ve$, it follows that $u_i\in  \mathfrak{T}^{\ve}_{\infty}$, for every $i\in\{0,1,\ldots,q\}$. 
In other words, in formula \eqref{D_infty-2} defining $D_\infty(u,v)$, we may restrict the infimum to the case where
all $u_i$'s belong to  $\mathfrak{T}^{\ve}_{\infty}$. This gives our claim \eqref{tech-dist}. Furthermore, when applying formula \eqref{D_infty-1} to compute the quantities $D^{\circ}_\infty(u_{i-1},u_i)$ in the right-hand side 
of \eqref{tech-dist}, we can restrict our attention to the case when the interval $[u_{i-1}, u_i]^\prime_\infty$ (resp.~$[u_i, u_{i-1}]^\prime_\infty$) is 
contained in $\mathfrak{T}^{\ve}_{\infty}$, since otherwise this interval contains 
the complement of $\mathfrak{T}^{\ve}_{\infty}$ and then the infimum of labels on $[u_{i-1}, u_i]^\prime_\infty$ is $0$ by Lemma \ref{inf:T:esp} (i).

Let us now discuss $D_\star(u,v)$ when $u,v\in B_\eta(\U)$. We write $\mathfrak{T}^{\star,\ve}$  for the closed subset of $\mathfrak{T}^\star$ 
consisting of the part $[0,\ve]$ of the ``spine'' $[0,1]$ and the subtrees branching off $[0,\ve]$. Then, we have 
$$\inf \limits_{v\in\mathfrak{T}^\star\setminus \mathfrak{T}^{\star,\varepsilon} } D_\star(0, v)>0,\quad \text{a.s.}~,$$
since we know that the equivalence class of $0$ in $\U$ is a singleton. Hence, for $\eta>0$ small, we get that the event where
$\{v\in \mathfrak{T}^\star:D_\star(0,v)\leq 4\eta\}\subset \mathfrak{T}^{\star,\ve}$ has probability at least
$1-\frac{\delta}{6}$. On the latter event, the same argument as for \eqref{tech-dist} then shows that, for every
$u,v\in\mathfrak{T}^{\star,\ve}\setminus \partial \mathfrak{T}^\star$ such that $D_\star(0,u)\leq\eta$ and 
$D_\star(0,v)\leq\eta$, we have 
\begin{equation}
\label{tech-dist2}
D_\star(u,v) = \build{\inf_{u_0=u,u_1,\ldots,u_p=v}}_{u_1,\ldots,u_{p-1}\in \mathfrak{T}^{\star,\ve}\setminus \partial \mathfrak{T}^\star}^{} \sum_{i=1}^pD^{\circ}_\star(u_{i-1},u_i),
\end{equation}
and moreover, when applying formula \eqref{Dbar1} to compute $D^{\circ}_\star(u_{i-1},u_i)$ we may discard the case when the interval $[u_{i-1},u_i]_\star$ (resp. $[u_{i},u_{i-1}]_\star$)
is not contained in $\mathfrak{T}^{\star,\ve}$. 

On the event $\mathcal{A}_1$, the labeled tree $\mathfrak{T}^{\star,\ve}$ is identified with $\mathfrak{T}^{\ve}_\infty$. Moreover, fixing $\eta>0$ small enough and
discarding an 
event $\mathcal{A}_2$ of probability at most $\frac{\delta}{3}$, we deduce from formulas \eqref{tech-dist} and \eqref{tech-dist2}, that  we have $D_\star(u,v) =D_\infty(u,v)$ whenever $u,v\in \mathfrak{T}^{\star,\ve}=\mathfrak{T}^{\ve}_\infty$ are such that
$D_\star(0,u)\vee D_\star(0,v)\leq\eta$ (which is equivalent to $D_\infty(0,u)\vee D_\infty(0,v)\leq\eta$ by \eqref{tech-dist} and \eqref{tech-dist2}). 
On the event $\mathcal{A}_1\backslash \mathcal{A}_2$, we also know that $\{v\in\mathfrak{T}_\infty: D_\infty(0,v)\leq 4\eta\}\subset\mathfrak{T}_\infty^\ve$
and $\{v\in \mathfrak{T}^\star:D_\star(0,v)\leq 4\eta\}\subset \mathfrak{T}^{\star,\ve}$. It then follows from this
discussion that, still on the event $\mathcal{A}_1\backslash \mathcal{A}_2$,  the identification of $\mathfrak{T}^{\star,\ve}$ with $\mathfrak{T}^{\ve}_\infty$
induces an isometry from $B_\eta(\U)$ onto $B_\eta(\mathfrak{H}')$, which clearly preserves the volume measures and the distinguished points. This completes the proof
of Property 2 and of the first assertion of the theorem. 

Let us finally discuss the boundary $\partial \mathfrak{H}'$ of $\mathfrak{H}'$. Recall that $\partial \mathfrak{H}'$ is defined as the set of all
points of $\mathfrak{H}'$ that have no neighborhood homeomorphic to the open unit disk.
The preceding identification of $B_\eta(\U)$ with $B_\eta(\mathfrak{H}')$ (except on
an event of probability at most $\delta$) also shows that, for every $\vartheta\in(0,\eta)$, the set $\partial \mathfrak{H}'\cap B_\vartheta(\mathfrak{H}')$ is identified with
$\partial\U\cap B_\vartheta(\U)$, which we know to be equal to $(\partial_0\U\cup\partial_1\U)\cap B_\vartheta(\U)$. It follows that, except
possibly on an event of probability at most $\delta$, we have $\partial \mathfrak{H}'\cap B_\vartheta(\mathfrak{H}')=(\partial_0\mathfrak{H}'\cup\partial_1\mathfrak{H}')\cap B_\vartheta(\mathfrak{H}')$, for every $\vartheta\in (0,\eta)$.
By scale invariance, the latter equality must hold for every $\eta>0$ (except on an event of probability at most $\delta$). Since $\delta$ was arbitrary, we conclude
that $\partial \mathfrak{H}'=\partial_0\mathfrak{H}'\cup\partial_1\mathfrak{H}'$.

The last assertion of the theorem is easy. 
Let $x\in\mathfrak{H}^\prime$. Then, we have $D_\infty(x,y)\geq \ell^\infty_x$ for every $y\in\partial_1\mathfrak{H}^\prime$ from the bound 
$D_\infty(x,y)\geq| \ell^\infty_x-\ell^\infty_y|$ and the fact that $\ell^\infty_y=0$ if $y\in\partial_1\mathfrak{H}^\prime$. Conversely, let $t\geq 0$
be such that $x=\Pi_\infty(\mathcal{E}^\infty_t)$, and let $r=\inf\{s\geq t: \mathcal{E}^\infty_s\in \partial\mathfrak{T}_\infty\}$. Then, $y:=\Pi_\infty(\mathcal{E}^\infty_r)$
belongs to $\partial_1\mathfrak{H}^\prime$, and it is easy to verify that $D_\infty(x,y)=\ell^\infty_x$. \endproof

In the last part of this section,  we state and prove a consequence of Theorem \ref{new-const-half-plane} that will be 
useful in the next section when we discuss hulls centered on a boundary segment. For simplicity, we write $\xx'=\Lambda'(0)$ for the 
distinguished point of $\HH^\prime$.

 \begin{proposition}
 \label{geo-half}
Let $\eta>0$. Then, almost surely, there exists $\delta>0$ such that the following holds. For every 
$x\in\HH'$ with $D_\infty(\xx',x)\geq \eta$, there is a shortest path from $x$ to $\partial_1\HH'=\{\Lambda'(t):t\in(-\infty,0]\}$
that ends at a point of $\{\Lambda'(t):t\in(-\infty,-\delta]\}$.
\end{proposition}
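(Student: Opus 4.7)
The plan is to show that the canonical ``forward simple geodesic'' already exhibited in the proof of the last assertion of Theorem~\ref{new-const-half-plane} lands on the desired part of $\partial_1\HH'$, for a suitable random $\delta>0$. Recall that for $x=\Pi_\infty(\mathcal{E}^\infty_t)$ and $r_t := \inf\{r\geq t : \mathcal{E}^\infty_r\in \partial\mathfrak{T}_\infty\}$, the point $\Lambda'(-L^{\HH',0}_{r_t})=\Pi_\infty(\mathcal{E}^\infty_{r_t})$ is the endpoint of a geodesic of length $\ell^\infty_{\mathcal{E}^\infty_t}$ from $x$. Set $T_\delta:=\inf\{s\geq 0 : L^{\HH',0}_s\geq \delta\}$ and note that $T_\delta$ is itself a boundary time ($\mathcal{E}^\infty_{T_\delta}\in\partial\mathfrak{T}_\infty$), since the nondecreasing continuous process $L^{\HH',0}$ grows only on the set of boundary visits of $\mathcal{E}^\infty$. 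The key intermediate step is the a.s.\ convergence
$$
\Delta(\delta) := \sup_{s\in[0,T_\delta]} D_\infty\big(\xx',\Pi_\infty(\mathcal{E}^\infty_s)\big)\longrightarrow 0 \qquad\text{as }\delta\downarrow 0.
$$

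To prove this, I apply the forward simple geodesic construction again at each $\Pi_\infty(\mathcal{E}^\infty_s)$, $s\in[0,T_\delta]$: since $T_\delta$ is a boundary time, one has $r_s\leq T_\delta$, and hence $L^{\HH',0}_{r_s}\leq \delta$, so the endpoint $\Lambda'(-L^{\HH',0}_{r_s})$ lies in $\Lambda'([-\delta,0])$. The triangle inequality then yields
$$
D_\infty(\xx',\Pi_\infty(\mathcal{E}^\infty_s))\leq \sup_{u\in[0,\delta]} D_\infty(\xx',\Lambda'(-u)) + \sup_{r\in[0,T_\delta]} \ell^\infty_{\mathcal{E}^\infty_r}.
$$
Each summand vanishes as $\delta\downarrow 0$: the first one by continuity of $\Lambda'$ at $0$, the second one by continuity of $r\mapsto \ell^\infty_{\mathcal{E}^\infty_r}$ together with $\ell^\infty_{\mathcal{E}^\infty_0}=\ell^\infty_0=0$, provided that $T_\delta\downarrow 0$ a.s.

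The main technical point is precisely $T_\delta\downarrow 0$ a.s., which reduces to $L^{\HH',0}_s>0$ for every $s>0$. This follows from the observation that boundary hits of $\mathcal{E}^\infty$ must accumulate at $s=0^+$: by~\eqref{hittingpro}, conditionally on $\mathcal{R}$, the intensity of subtrees $\omega^j$ with $W_*(\omega^j)\leq 0$ and $t_j\in(0,\varepsilon)$ equals $\int_0^\varepsilon R_t^{-2}\,dt$, which is a.s.\ infinite for every $\varepsilon>0$ since $R_t^2\sim c\,t$ near $0$ (the non-integrability of $1/R_t^2$ at $0$ is standard for a three-dimensional Bessel process started from $0$). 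Hence infinitely many subtrees with positive exit measure are attached near the root of the spine, and their exploration entry times $\alpha_j$ accumulate at $0$ because the total volume $\sum_{t_j<t}\sigma(\tilde\omega^j)$ vanishes as $t\downarrow 0$; their positive contributions to $L^{\HH',0}$ force $L^{\HH',0}_s>0$ for every $s>0$.

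Given these two ingredients, the proposition follows immediately. For $\eta>0$ fixed, pick a random $\delta>0$ with $\Delta(\delta)<\eta$ and also $\Lambda'([-\delta,0])\subset \{z\in\HH' : D_\infty(\xx',z)<\eta\}$ (the latter by continuity of $\Lambda'$ at~$0$). If $x\in\partial_1\HH'$ satisfies $D_\infty(\xx',x)\geq\eta$, injectivity of $\Lambda'$ forces $x=\Lambda'(t')$ with $t'\leq -\delta$ and the trivial zero-length geodesic works. Otherwise, any $t$ with $\Pi_\infty(\mathcal{E}^\infty_t)=x$ must satisfy $t>T_\delta$, since $t\leq T_\delta$ would give $D_\infty(\xx',x)\leq \Delta(\delta)<\eta$; then $r_t\geq t>T_\delta$, so the forward canonical geodesic from $x$ ends at $\Lambda'(-L^{\HH',0}_{r_t})$ with $L^{\HH',0}_{r_t}\geq L^{\HH',0}_{T_\delta}=\delta$, as required.
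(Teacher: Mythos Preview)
Your proof is correct and takes a genuinely different route from the paper. The paper constructs the \emph{backward} simple geodesic $\Phi_a$ (via $\phi_a(u)=\sup\{v\leq s:\ell^\infty_{\mathcal{E}^\infty_v}=u\}$, where $s$ is the smallest exploration time of $a$) and then invokes Lemma~\ref{inf:T:esp} to find $\varepsilon>0$ so that every $x$ with $D_\infty(\xx',x)\geq\eta$ admits a representative $a\in\mathfrak{T}_\infty\setminus\mathfrak{T}_\infty^\varepsilon$; the backward geodesic from such an $a$ then lands no earlier than the last boundary time of $\mathfrak{T}_\infty^\varepsilon$, which gives the required $\delta$. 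You instead use the \emph{forward} geodesic already exhibited at the end of the proof of Theorem~\ref{new-const-half-plane}, and replace the appeal to Lemma~\ref{inf:T:esp} by the direct statement $L^{\HH',0}_s>0$ for every $s>0$, which you obtain from the Bessel divergence $\int_0^\varepsilon R_t^{-2}\,\dd t=\infty$ and the finiteness of $\sum_{t_j<t}\sigma(\tilde\omega^j)$. This is a clean alternative: it sidesteps Lemma~\ref{inf:T:esp}(ii) entirely and yields the slightly stronger intermediate fact $\Delta(\delta)\to 0$, at the cost of one extra ingredient (the local-time positivity). The paper's route has the advantage of reusing the $\mathfrak{T}_\infty^\varepsilon$ machinery already developed for Theorem~\ref{new-const-half-plane}.

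One small point worth making explicit: the identity $\Pi_\infty(\mathcal{E}^\infty_{r_t})=\Lambda'(-L^{\HH',0}_{r_t})$ is not quite automatic, since $\tau_{-L^{\HH',0}_{r_t}}$ may be strictly smaller than $r_t$; but then $L^{\HH',0}$ is constant on $[\tau_{-L^{\HH',0}_{r_t}},r_t]$, so there are no boundary times in the open interval, and Proposition~\ref{extension-D_infty} identifies the two endpoints. This justifies your formula and the rest of the argument goes through.
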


\proof For $a\in \mathfrak{T}_\infty$, let $s$ be the smallest time such that $\mathcal{E}^\infty_s=a$,
and, for every $u\in[0,\ell^\infty_a]$, set 
$$\phi_a(u)=\sup\{v\leq s: \ell^\infty_{\mathcal{E}^\infty_v}=u\}$$
so that $\mathcal{E}^\infty_{\phi_a(u)}$
 is the ``last'' point before $a$ with label $u$. It easily follows from our definitions
 that $D^\circ_\infty(\mathcal{E}^\infty_{\phi_a(u)},\mathcal{E}^\infty_{\phi_a(v)})=v-u$ for every $0\leq u\leq v\leq \ell^\infty_a$. 
 Note that $\mathcal{E}^\infty_{\phi_a(\ell_a^\infty)}=a$, and $\mathcal{E}^\infty_{\phi_a(0)}\in\partial\mathfrak{T}_\infty$, so that
 $\Pi_\infty(\mathcal{E}^\infty_{\phi_a(0)})\in \partial_1\HH'$. Recalling that $D_\infty(\Pi_\infty(a),\partial_1\HH')=\ell^\infty_a$,
 we obtain that the path $(\Pi_\infty(\mathcal{E}^\infty_{\phi_a(\ell^\infty_a-u)}),0\leq u\leq \ell^\infty_a)$ is a shortest path from $\Pi_\infty(a)$
 to $\partial_1\HH'$. Let us write $\Phi_a$ for this path.
 
 Then, almost surely, we can find $\ve>0$ such that, for any $x\in\HH'$ with $D_\infty(\xx',x)\geq \eta$,
 we have $x=\Pi_\infty(a)$ for some $a\in \mathfrak{T}_\infty\backslash\mathfrak{T}_\infty^\ve$, with the notation
 introduced before Lemma \ref{inf:T:esp}. But then, the geodesic $\Phi_a$ hits $\partial\HH'_1$
 at a point of the form $\Pi_\infty(\mathcal{E}^\infty_r)$ with $r\geq r_\ve$, where $\ee^\infty_{r_\ve}$ 
 is the last point of $\mathfrak{T}_\infty^\ve$ with zero label. By Lemma \ref{inf:T:esp} and the support
 property of exit local times, we have $\Pi_\infty(\ee^\infty_{r_\ve})=\Lambda(-\delta)$
 for some $\delta>0$, and it follows that the path $\Phi_a$ hits $\partial_1\HH'$
 at a point of $\{\Lambda'(t):t\in(-\infty,-\delta]\}$. \endproof

\section{Hulls centered on a boundary segment}
\label{sec:segment}

In this section, we give an analog of Theorem \ref{peeling-HP} for hulls centered on a segment of the 
boundary.  
Our motivation is to investigate various peeling explorations of the Brownian half-plane, and we expect
peeling from a boundary segment to be one of the building blocks for such investigations. Note that the case of peeling from a single 
boundary point is Theorem \ref{peeling-HP}, and that peeling from the whole boundary is discussed in \cite[Section 5.3]{spine}.
Eventually, it would be interesting to characterize all metric explorations of the Brownian half-plane that can be obtained 
by combining different types of peelings (in such a way that at each step the ``unknown'' region remains a 
Brownian half-plane) and possibly taking limits of such explorations. We hope to study these questions in future work.

We consider the curve-decorated Brownian half-plane $(\HH,D,V,\Lambda)$.
Let $r>0$ and $s>0$. Let $\check B^\circ_r(\HH,[0,s])$ be the unique 
unbounded component of the open set
$$\{x\in\HH : D(x,\Lambda([0,s]))>r\}.$$
We write  $\check B^\bullet_r(\HH,[0,s])$ for the closure of  $\check B^\circ_r(\HH,[0,s])$, and we set $ B^\circ_r(\HH,[0,s])=\HH \backslash \check B^\bullet_r(\HH,[0,s])$
and $ B^\bullet_r(\HH,[0,s])=\HH \backslash \check B^\circ_r(\HH,[0,s])$.
We also set
$$\xx_s:=\Lambda(\inf\{t\in \R: \Lambda(t)\notin \check B^\circ_r(\HH,[0,s])\}).$$

\begin{theorem}
\label{markov-seg}
The intrinsic metric on $\check B^\circ_r(\HH,[0,s])$ (associated with the metric $D$ on $\HH$)
has a continuous extension to $\check B^\bullet_r(\HH,[0,s])$, which is a metric on $\check B^\bullet_r(\HH,[0,s])$.
Then the space $\check B^\bullet_r(\HH,[0,s])$ equipped with this extended intrinsic metric, with the
restriction of the volume measure $V$ and with the distinguished point $\xx_s$ is a Brownian half-plane.
\end{theorem}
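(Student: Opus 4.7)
The plan is to mirror the proof of Theorem \ref{peeling-HP}: first establish the analog for the free Brownian disk with large perimeter, and then transfer the result to the Brownian half-plane via the coupling of \cite[Proposition 4.2]{GM}. By scaling, one may assume $r=1$. Throughout I will keep the notation of the proof of Theorem \ref{peeling-HP}.

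The first step is the disk analog: for the free Brownian disk $(\D_{(S)},D_{(S)},V_{(S)},\Lambda_{(S)})$ with $S$ large compared to $1+s$, the complement $\check B^\bullet_1(\D_{(S)},[0,s])$ of the hull of radius $1$ centered on the boundary segment $\Lambda_{(S)}([0,s])$, equipped with the extended intrinsic metric, the restriction of $V_{(S)}$, and an appropriate distinguished point, is a free Brownian disk with a random boundary length. I would establish this by adapting the arguments of \cite[Theorems 22--23]{LGR3}, replacing the single-point hull by a segment hull. The snake-trajectory construction of the Brownian disk (Section \ref{sec:Bdisk}) together with the special Markov property of the Brownian snake provides the necessary tools: the exploration of the hull centered on the segment reduces to following labels on the branches attached to the spine $[0,s]$ and their excursions above $1$. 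Proposition \ref{geo-half}, obtained through the new construction of the Brownian half-plane (Theorem \ref{new-const-half-plane}), will be useful here to control the behavior of geodesics near the two endpoints $\Lambda_{(S)}(0)$ and $\Lambda_{(S)}(s)$ of the segment, ensuring that the boundary of the hull has the expected structure and that the distinguished point $\xx_s$ is well-defined as a measurable function of the hull.

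The second step is the coupling. By \cite[Proposition 4.2]{GM} and scaling, for any $\delta>0$ and $A>10(1+s)$, one can couple $(\D_{(S)},D_{(S)},V_{(S)},\Lambda_{(S)})$ and $(\HH,D,V,\Lambda)$ for $S$ large so that, outside an event of probability at most $\delta$, there exists a measure-preserving isometry $\mathfrak{I}: B^\bullet_A(\D_{(S)})\to B^\bullet_A(\HH)$ compatible with the decorating curves as in \eqref{coup-cur}. Since $B^\bullet_1(\D_{(S)},[0,s])\subset B^\bullet_A(\D_{(S)})$ on this event (provided $A$ is large enough compared to $1+s$), the isometry $\mathfrak{I}$ sends $B^\bullet_1(\D_{(S)},[0,s])$ to $B^\bullet_1(\HH,[0,s])$, maps the boundary of the disk hull to $\partial B^\bullet_1(\HH,[0,s])$, and identifies the corresponding distinguished points. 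The extension of the intrinsic metric on $\check B^\bullet_1(\HH,[0,s])$ is inherited from the corresponding property in $\check B^\bullet_1(\D_{(S)},[0,s])$, which comes from Step~1. For any fixed $a < (A/2)-1$, the isometry $\mathfrak{I}$ induces an isometry between the ball of radius $a$ around the distinguished point in $\check B^\bullet_1(\D_{(S)},[0,s])$ and the ball of radius $a$ around $\xx_s$ in $\check B^\bullet_1(\HH,[0,s])$, preserving volume measures and decorating curves, by the same argument as in the proof of Theorem \ref{peeling-HP}.

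Finally, Step~1 guarantees that $\check B^\bullet_1(\D_{(S)},[0,s])$ is a free Brownian disk with a random perimeter tending to infinity as $S\to\infty$; since free Brownian disks with perimeter tending to infinity converge in distribution to the Brownian half-plane in $\M^{\mathrm{GHPU}}_\infty$, a second coupling (between a free Brownian disk of large perimeter and a Brownian half-plane, as used for Theorem \ref{peeling-HP}) shows that the ball of radius $a$ around $\xx_s$ in $\check B^\bullet_1(\HH,[0,s])$ coincides, except on an event of probability tending to $0$, with the ball of radius $a$ in an independent Brownian half-plane. Letting $a\to\infty$ and $\delta\to 0$ concludes the proof. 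The main obstacle is the disk analog of Step~1: the segment introduces two corners and a random perimeter for the complement, so both the measurability of $\xx_s$ and the independence/distribution identification require extending \cite[Theorems 22--23]{LGR3} with nontrivial but parallel bookkeeping of the Brownian-snake excursions along the spine $[0,s]$.
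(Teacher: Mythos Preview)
Your plan is a plausible strategy, but it leaves Step~1 --- the segment-hull analog of \cite[Theorems 22--23]{LGR3} for the Brownian disk --- as an unproved black box, and you yourself flag it as ``the main obstacle.'' That result is not stated anywhere in \cite{LGR3}, and adapting the proof to segment hulls with two corners is genuinely nontrivial; it is not clear that ``parallel bookkeeping'' suffices without new ideas. As written, the proposal is a reduction to an unproven lemma of comparable difficulty to the theorem itself.

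The paper's proof avoids this entirely by a clever bootstrap from the already-established point-hull case (Theorems \ref{peeling-HP} and \ref{indep-peeling}). The key observation is that the boundary $\partial B^\bullet_1(\HH)$ of the radius-$1$ hull is, in the peeled half-plane $\HH_1$, exactly the boundary segment $\Lambda^1([0,Z_1])$. Hence taking the segment-hull of radius $r$ around $\Lambda^1([0,Z_1])$ in $\HH_1$ is the same as taking the point-hull of radius $1+r$ around $\xx$ in $\HH$:
\[
\check B^\circ_r(\HH_1,[0,Z_1])=\HH\setminus B^\bullet_{1+r}(\HH).
\]
Theorem \ref{peeling-HP} applied at radius $1+r$ then gives immediately that $\check B^\bullet_r(\HH_1,[0,Z_1])$ is a Brownian half-plane, independent of $Z_1$. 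Since $\HH_1$ is itself a Brownian half-plane independent of the exponentially distributed $Z_1$, a Fubini argument yields the conclusion for Lebesgue-a.e.\ $s>0$. Lemma \ref{markov-seg-lem} (which is where Proposition \ref{geo-half} is actually used) then upgrades this to every $s>0$ via left-continuity of $s\mapsto \check B^\bullet_r(\HH,[0,s])$.

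In short: rather than proving a new disk theorem and transferring it, the paper realizes the segment-hull as an iterated point-hull with a random segment length, and then de-randomizes. This is both shorter and avoids the gap in your Step~1.
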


\noindent{\it Remark.} By analogy with Theorem \ref{indep-peeling}, one can also prove that the
Brownian half-plane $\check B^\bullet_r(\HH,[0,s])$ of the theorem is independent of the 
space $B^\bullet_r(\HH,[0,s])$ equipped with an appropriately defined intrinsic metric, with the restriction of
the volume $V$ and with the distinguished point $\xx_s$. We will however leave this extension
to the reader. 

\smallskip
We state a lemma before proving Theorem \ref{markov-seg}.

\begin{lemma}
\label{markov-seg-lem}
Almost surely, there exists $\delta\in(0,s)$ such that, for every $u\in[s-\delta,s]$,
$$\check B^\bullet_r(\HH,[0,u])=\check B^\bullet_r(\HH,[0,s]).$$
\end{lemma}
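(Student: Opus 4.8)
The plan is to show that the hull $\check B^\bullet_r(\HH,[0,s])$ does not change when we shrink the base segment from $[0,s]$ to $[0,u]$ for $u$ slightly less than $s$. First I would observe the monotonicity: since $[0,u]\subset[0,s]$ whenever $u\le s$, we have $\{x : D(x,\Lambda([0,u]))>r\}\supseteq\{x:D(x,\Lambda([0,s]))>r\}$, and hence $\check B^\circ_r(\HH,[0,u])\supseteq\check B^\circ_r(\HH,[0,s])$ (the unbounded component of the larger open set contains the unbounded component of the smaller one, because the two unbounded components must eventually agree far from $\xx$). Consequently $\check B^\bullet_r(\HH,[0,u])\subseteq\check B^\bullet_r(\HH,[0,s])$, so the only thing to rule out is that the hull strictly shrinks. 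The key point is that, by definition of the closed ball, $\Lambda([s-\delta,s])$ is contained in the closed ball of radius $r$ around any point of $\Lambda([0,s])$ \emph{only if} $D(\Lambda(t),\Lambda([0,u]))\le r$ for all $t\in[0,s]$; more precisely, I want to show that removing the tip piece $\Lambda((u,s])$ from the base does not remove any point from the $r$-neighborhood.

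The heart of the matter: set $N_s:=\{x\in\HH:D(x,\Lambda([0,s]))\le r\}$, which is the complement of $\check B^\circ_r(\HH,[0,s])$, i.e. $N_s=B^\bullet_r(\HH,[0,s])\cup(\text{bounded holes})=\HH\setminus\check B^\circ_r(\HH,[0,s])$; actually $N_s$ is exactly the set whose unbounded-complement we remove, so $\check B^\bullet_r(\HH,[0,s])=\HH\setminus\check B^\circ_r(\HH,[0,s])$ and it suffices to prove $N_u$ and $N_s$ have the same unbounded complementary component for $u$ close to $s$. It is enough to find $\delta>0$ such that $N_s\subseteq N_{s-\delta}$, because the reverse inclusion is automatic from monotonicity; and $N_s\subseteq N_{s-\delta}$ follows if every point $x$ with $D(x,\Lambda([0,s]))\le r$ also satisfies $D(x,\Lambda([0,s-\delta]))\le r$. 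This is where continuity enters. Suppose not: then there is a sequence $\delta_n\downarrow 0$ and points $x_n$ with $D(x_n,\Lambda([0,s]))\le r$ but $D(x_n,\Lambda([0,s-\delta_n]))>r$. The first inequality forces $x_n$ to lie in a fixed ball around $\xx$ (since $D(x_n,\xx)\le r+D(\xx,\Lambda([0,s]))$ is bounded), so by bounded compactness we may extract a subsequential limit $x_\infty$ with $D(x_\infty,\Lambda([0,s]))\le r$. For each $n$, $D(x_n,\Lambda(t))>r$ for all $t\le s-\delta_n$; passing to the limit and using continuity of $D$ and of $\Lambda$, we get $D(x_\infty,\Lambda(t))\ge r$ for all $t<s$, hence (by continuity in $t$) $D(x_\infty,\Lambda(t))\ge r$ for all $t\le s$; combined with $D(x_\infty,\Lambda([0,s]))\le r$ this gives $D(x_\infty,\Lambda([0,s]))=r$, and in fact the infimum is attained only at $t=s$. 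So the obstruction reduces to: the set of points of $\HH$ whose distance to $\Lambda([0,s])$ equals $r$ and is realized \emph{only} at the endpoint $\Lambda(s)$ must be empty (or must not meet $N_s$ in a way that affects the hull).

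The step I expect to be the main obstacle is precisely controlling this boundary case --- showing that, almost surely, no point is at distance exactly $r$ from the segment $\Lambda([0,s])$ with the minimizing boundary point being exactly the endpoint $\Lambda(s)$, or, if such points exist, that they lie in the bounded holes and do not change the hull. To handle this I would use the Caraceni--Curien description: distances from $\Lambda([0,s])$ can be read off from labels and the structure of $\mathfrak{T}$ after rerooting the boundary curve appropriately, much as $B^\bullet_r(\HH)$ was characterized via \eqref{formula-hull} and \eqref{formu-bdry}. Then the event that some relevant point has its unique closest boundary point equal to $\Lambda(s)$ corresponds to an exceptional coincidence of labels (a geodesic to the boundary ending exactly at the moving endpoint), which has probability zero by an absolute-continuity / Bessel-process argument — concretely, the boundary behaviour near $\Lambda(s)$ is governed by a three-dimensional Bessel process (via Theorem \ref{new-const-half-plane}, after translating so that $\Lambda(s)$ plays the role of $\xx'$, or via the analogous fact for the half-plane), and such a process does not have a fixed time at which it realizes a prescribed minimum. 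An alternative, cleaner route is to invoke Proposition \ref{geo-half}: for $x$ with $D(\xx_s,x)$ bounded below, geodesics from $x$ to the relevant half-boundary end strictly before the moving endpoint, giving the required $\delta$ uniformly; the finitely many points close to $\xx_s$ are handled separately by a direct continuity argument. I would combine these to produce the claimed $\delta\in(0,s)$, completing the proof.
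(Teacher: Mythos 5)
Your overall reduction points in the right direction, and you correctly identify both the crux (the behaviour of near-minimizers at the moving endpoint $\Lambda(s)$) and the right tool (Proposition \ref{geo-half} plus translation invariance), which is indeed what the paper uses. But the crucial step is left as a sketch, and the sketch skips exactly the part where the work lies. Proposition \ref{geo-half} concerns geodesics to the \emph{half-infinite} boundary $\Lambda((-\infty,s])$ (after recentering at $\Lambda(s)$), whereas the lemma is about distances to the \emph{finite} segments $\Lambda([0,s])$ and $\Lambda([0,s-\delta])$. A geodesic supplied by Proposition \ref{geo-half} may land at a negative time, in which case it says nothing about $D(x,\Lambda([0,s-\delta]))$; conversely, a geodesic to the segment need not be a geodesic to the half-line, so you cannot re-route it directly. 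The paper's proof is essentially devoted to bridging this: it fixes $\eta>0$ so that any point within $\eta$ of $\Lambda([s/2,s])$ is strictly closer to $\Lambda([s/2,s])$ than to $\Lambda((-\infty,0])$, follows a geodesic from $x$ to $\Lambda([0,s])$ down to an intermediate point $z$ at distance $\eta$ from $\Lambda([s/2,s])$, uses the choice of $\eta$ to identify segment-geodesics from $z$ with half-line geodesics, re-routes via Proposition \ref{geo-half}, and uses $\eta$ again to force the new landing point to lie in $\Lambda([0,s-\delta])$. None of this appears in your proposal, and your phrase about handling ``the finitely many points close to $\xx_s$'' does not address it (the problematic points are at distance about $r$ from $\Lambda(s)$, not close to it). Your alternative route via a Bessel/label ``coincidence has probability zero'' argument is only a heuristic: none of the available constructions encodes distances from a boundary \emph{segment} as labels, so there is no direct computation to invoke.

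Two further points. First, your compactness argument does not give the dichotomy you state: from $D(x_n,\Lambda(t))>r$ for $t\le s-\delta_n$ you only get $D(x_\infty,\Lambda(t))\ge r$ for all $t\le s$, so the minimum of $t\mapsto D(x_\infty,\Lambda(t))$ equals $r$ and is attained at $t=s$, but possibly at other times as well; ruling out points whose \emph{unique} foot is $\Lambda(s)$ would therefore not contradict the existence of the failing sequence $(x_n)$. The paper avoids this by proving directly the stability of distances, namely that for any compact $K$ disjoint from $\Lambda([0,s])$ there is $\delta$ with $D(x,\Lambda([0,s-\delta]))=D(x,\Lambda([0,s]))$ on $K$ (its claim \eqref{seg-tec}), and then taking $K=\partial B^\bullet_r(\HH,[0,s])$; your target inclusion $N_s\subseteq N_{s-\delta}$ would also follow from this, but not without it. Second, a minor slip: for $u\le s$ one has $\check B^\bullet_r(\HH,[0,u])\supseteq\check B^\bullet_r(\HH,[0,s])$ (the checked set is the closure of the unbounded complementary component, not the hull), so your displayed inclusion is reversed, though your intended reduction is unaffected.
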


\proof We claim that, almost surely, for any compact subset $K$ of $\HH$ not intersecting $\Lambda([0,s])$, there exists $\delta\in(0,s/2)$ such that 
\begin{equation}
\label{seg-tec}
D(x,\Lambda([0,s-\delta]))=D(x,\Lambda([0,s]))
\end{equation}
for every $x\in K$. The statement of the lemma easily follows from \eqref{seg-tec}
by taking $K=\partial B^\bullet_r(\HH,[0,s])$. 

Let us prove \eqref{seg-tec}. We first fix $\eta>0$ so that,
if $y\in\HH$ is such that $D(y,\Lambda([s/2,s]))\leq \eta$, then $D(y,\Lambda((-\infty,0]))>D(y,\Lambda([s/2,s]))$. 
Taking $\eta$ smaller if necessary, we may assume that $D(y,\Lambda([0,s]))\geq \eta$ for every $y\in K$.
Thanks to Proposition \ref{geo-half}, we can find $\delta\in(0,s/2)$
such that, for every $x\in \HH$ with $D(x,\Lambda(s))\geq \eta$, there is a geodesic from
$x$ to $\Lambda((-\infty,s])$ that ends at a point of $\Lambda((-\infty,s-\delta])$ (we use the fact that 
$(\HH,D,V,\Lambda(s+\cdot))$ has the same distribution as $(\HH,D,V,\Lambda)$).
Then, let $x\in K$, and consider a geodesic $\phi$ from $x$ to $\Lambda([0,s])$. If the geodesic $\phi$ ends at a point 
of $\Lambda([0,s-\delta])$, \eqref{seg-tec} clearly holds. Otherwise, on the geodesic $\phi$, we can find 
a point $z$ at distance $\eta$ from $\Lambda([s/2,s])$, and then our choice of $\eta$ implies that a
geodesic from $z$  to $\Lambda([0,s])$ must also be a geodesic from
$z$ to $\Lambda((-\infty,s])$. By our choice of 
$\delta$, the part of the geodesic $\phi$
between $z$ and $\Lambda([0,s])$ can be replaced (without increasing its length) by a geodesic from $z$ to $\Lambda((-\infty,s-\delta])$,
which must end at a point of $\Lambda([0,s-\delta])$, again by the choice of $\eta$.
We conclude that \eqref{seg-tec} holds. \endproof

\proof[Proof of Theorem \ref{markov-seg}.]
From Theorems \ref{peeling-HP} and \ref{indep-peeling}, we know
that 
$$\HH_1=\HH\backslash B^\circ_1(\HH)$$
equipped with the (extended) intrinsic metric, with the restriction of the volume measure on $\HH$
and with the boundary curve $\Lambda^1$
is a curve-decorated Brownian half-plane, which furthermore is independent of the hull $B^\bullet_1(\HH)$ also viewed as
a curve-decorated measure metric space (for the appropriate intrinsic metric). In particular, 
the curve-decorated Brownian half-plane $\HH_1$ is independent of the perimeter $Z_1$ of $B^\bullet_1(\HH)$, which we
know to be exponentially distributed with parameter $3/2$ (Proposition \ref{formulas!}). Note that, by definition,
$\Lambda^1([0,Z_1])=\partial B^\bullet_1(\HH)$.

Now observe that
$$\check B^\circ_r(\HH_1,[0,Z_1])=\HH_1\backslash B^\bullet_r(\HH_1,[0,Z_1])=\HH\backslash B^\bullet_{1+r}(\HH),$$
because a curve connecting a point of $\HH_1$ to infinity stays at distance greater than $r$ from $\Lambda^1([0,Z_1])=\partial B^\bullet_1(\HH)$ 
if and only if it stays at distance greater than $1+r$ from $\xx$. Fom the last display 
and Theorems \ref{peeling-HP} and \ref{indep-peeling} applied to $\mathfrak{H}_{1+r}$, we infer that:
\begin{itemize}
\item[\rm(i)] almost surely, the intrinsic metric on $\check B^\circ_r(\HH_1,[0,Z_1])$
has a continuous extension to its closure $\check B^\bullet_r(\HH_1,[0,Z_1])$, which is a metric on $\check B^\bullet_r(\HH_1,[0,Z_1])$;
\item[\rm(ii)] $\check B^\bullet_r(\HH_1,[0,Z_1])$ equipped with this extended 
metric (and with the restriction of the volume measure on $\HH$, and the distinguished point $\xx_{1+r}$) is a Brownian half-plane, which
is independent of $Z_1$.
\end{itemize}
In particular, the independence property in (ii) holds because the half-plane $\HH\backslash B^\circ_{1+r}(\HH)$ is independent of the hull $B^\bullet_{1+r}(\HH)$
and therefore also of $B^\bullet_1(\HH)$. 

We then observe that (i) still holds if $Z_1$ is replaced by a fixed value $s>0$. Indeed, if this was not true, Lemma \ref{markov-seg-lem}
would allow us to find $\delta\in(0,s)$ such that property (i) written with $Z_1$ replaced by $u$ would fail for every $u\in[s-\delta,s]$ with positive probability. Clearly this
is a contradiction since $\P(s-\delta\leq Z_1\leq s)>0$. This gives the first assertion of the theorem.

To get the second assertion, let $g$ be a bounded continuous function on $\R_+$, and let $F$ be a bounded continuous function on the space $\M^\infty_{\mathrm{GHPU}}$.
Also write $\Theta$ for the distribution of the 
Brownian half-plane (viewed as a pointed measure metric space). It follows from (i) and (ii) that
$$\E[g(Z_1)\,F(\check B^\bullet_r(\HH_1,[0,Z_1]))]= \E[g(Z_1)]\,\Theta(F)= \frac{3}{2}\,\Theta(F)\,\int_0^\infty g(s)\,e^{-3s/2}\,\dd s.$$
On the other hand, we have also, using the independence of $\HH_1$ and $Z_1$,
$$\E[g(Z_1)\,F(\check B^\bullet_r(\HH_1,[0,Z_1]))]=\frac{3}{2}\,\int_0^\infty g(s)\,\E[F(\check B^\bullet_r(\HH_1,[0,s]))]\,e^{-3s/2}\,\dd s.$$
It follows that, for Lebesgue almost every  $s>0$,
$$\E[F(\check B^\bullet_r(\HH_1,[0,s]))]= \Theta(F),$$
or equivalently, since $\HH_1$ and $\HH$
have the same distribution,
$$\E[F(\check B^\bullet_r(\HH,[0,s]))]= \Theta(F).$$
To complete the proof, we need to verify that this property in fact holds for {\it every} $s>0$. To this end, 
we just notice that the mapping $u\mapsto \E[F(\check B^\bullet_r(\HH,[0,u]))]$ is left-continuous, 
as an immediate consequence of Lemma \ref{markov-seg-lem}.  \endproof

\section*{Appendix}

In this appendix, we prove Lemmas \ref{tec-formu} and \ref{tec-formu2}. Recall that $(\mathcal{R}_t)_{t\geq 0}$ is a five-dimensional Bessel process started from $0$.

\proof[Proof of Lemma \ref{tec-formu}] Fix $0<x<y$ and $c$ such that $0<c<x$, and write $\wt\r_t=\r_{(\mathcal{L}_y-t)\vee 0}$ for every $t\geq 0$. Also set
$T_x=\mathcal{L}_y-\mathcal{L}_x=\inf\{t\geq 0: \wt \r_t=x\}$. As a consequence of Nagasawa's time-reversal theorem, we know that $(\wt\r_t)_{t\geq 0}$ is a Bessel process of dimension $-1$
started at $y$, and  we have
$$\E\Big[\exp\Big(-\int_{\mathcal{L}_x}^{\mathcal{L}_y} \dd t\, \Big((\mathcal{R}_t-c)^{-2}-(\mathcal{R}_t)^{-2}\Big)\Big)\Big] 
=\E\Big[\exp\Big(-\int_0^{T_x}\,\dd t \Big((\wt\r_t-c)^{-2}-(\wt\r_t)^{-2}\Big)\Big)\Big].$$
For every $0<u<v$, let $B=(B_t)_{t\geq 0}$ stand for a linear Brownian motion thats starts at $v$ under the probability measure $\P_v$
and let $T^{(B)}_u=\inf\{t\geq 0:B_t=u\}$. Using the absolute continuity properties of Bessel processes with respect
to Brownian motion (see e.g. \cite[Lemma 1]{Bessel}), we have
\begin{align*}
\E\Big[\exp\Big(-\int_0^{T_x}\,\dd t \Big((\wt\r_t-c)^{-2}-(\wt\r_t)^{-2}\Big)\Big)\Big]&=\frac{y}{x}\,\E_y\Big[\exp\Big(-\int_0^{T^{(B)}_x} \dd t\,(B_t-c)^{-2}\Big)\Big]\\
&=\frac{y}{x}\,\E_{y-c}\Big[\exp\Big(-\int_0^{T^{(B)}_{x-c}} \dd t\,(B_t)^{-2}\Big)\Big]\\
&=\frac{y}{x}\times \frac{x-c}{y-c},
\end{align*}
where the last equality is classical and follows from an application of the optional stopping theorem to the martingale
$$\frac{1}{B_{t\wedge T^{(B)}_{x-c}}}\,\exp\Big(-\int_0^{t\wedge T^{(B)}_{x-c}} \dd s\,(B_s)^{-2}\Big)$$
under $\P_{y-c}$. 
\endproof

\proof[Proof of Lemma \ref{tec-formu2}] We use the same notation as in the proof of Lemma \ref{tec-formu}, taking now $y=r/\sqrt{3}$. In particular,
$\wt\r_t=R_{(\mathcal{L}_y-t)\vee 0}$, and $T_x=\inf\{t\geq 0: \wt\r_t=x\}$ for every $x\in[0,y]$. By the same time-reversal argument, we have 
\begin{align*}
&E\Big[\exp\Big(-\nu \gamma_r - 2 \int_0^{\gamma_r} \dd t\,g_\mu(\sqrt{3}\mathcal{R}_t)\Big)\Big] \\
&\quad=\E\Big[\exp\Big(-\int_0^{T_0} \dd t\,(\nu+2g_\mu(\sqrt{3}\wt\r_t))\Big)\Big]\\
&\quad=\lim_{\ve \to 0} \E\Big[\exp\Big(-\int_0^{T_\ve} \dd t\,(\nu+2g_\mu(\sqrt{3}\wt\r_t))\Big)\Big]\\
&\quad=\lim_{\ve\to 0} \frac{y}{\ve} \,\E_y\Big[\exp\Big(-\int_{0}^{T^{(B)}_\ve} \dd t\,(\nu+2g_\mu(\sqrt{3}B_t)+(B_t)^{-2})\Big)\Big],
\end{align*}
where the last equality relies on the same absolute continuity argument as in the proof of Lemma \ref{tec-formu}. 

Observe that 
$$\nu+2g_\mu(\sqrt{3}B_t)+(B_t)^{-2}=
\nu+\sqrt{2\mu}\Big(3\,\coth\big((2\mu)^{1/4}\sqrt{3} B_t\big)^2 - 2\Big) .$$
Set $a:=(2\mu)^{1/4}$, and then, for every $s>0$,
$$f(s):=\nu + a^2\Big( 3\coth(a\sqrt{3}s)^2 -2\Big).$$
Then, if 
$$F(s):= \exp\Big(-s\sqrt{2(a^2+\nu)}\Big)\,\Big(a\coth(a\sqrt{3}s)+ \sqrt{\frac{2}{3}(a^2+\nu)}\Big),$$
a direct calculation shows that
$$F''(s)=2\,f(s)\,F(s).$$
By a simple application of It\^o's formula, it follows that, for $\ve\in(0,y)$,
$$F(B_{t\wedge T^{(B)}_\ve})\,\exp\Big(-\int_0^{t\wedge T^{(B)}_\ve} \dd s\, f(B_s)\Big)$$
is a (bounded) martingale under $\P_y$. The optional stopping theorem then gives
$$\E_y\Big[\exp\Big(-\int_0^{T^{(B)}_\ve} \dd t\, f(B_t)\Big)\Big] = \frac{F(y)}{F(\ve)},$$
and thus
\begin{align*}
\frac{y}{\ve} \,\E_y\Big[\exp\Big(-\int_{0}^{T^{(B)}_\ve} \dd t\,(\nu+2g_\mu(\sqrt{3}B_t)+(B_t)^{-2})\Big)\Big]
&=\frac{y}{\ve} \,\E_y\Big[\exp\Big(-\int_0^{T^{(B)}_\ve} \dd t\, f(B_t)\Big)\Big]\\
&=\frac{y\,F(y)}{\ve\,F(\ve)},
\end{align*}
which converges when $\ve \to 0$ to
$\sqrt{3}\,y\,F(y)= r\,F(r/\sqrt{3})=G_r(\mu,\nu)$. This completes the proof. 
\endproof

{\bf Acknowledgements.} We are indebted to Thomas Budzinski for stimulating conversations.
We also thank an anonymous referee for several useful remarks.

\end{document}